\newcommand{\dotminus}{\mathbin{\text{\@dotminus}}}
\newcommand{\@dotminus}{%
  \ooalign{\hidewidth\raise1ex\hbox{.}\hidewidth\cr$\m@th-$\cr}%
}
\def\showauthornotes{1}
\newcommand{\Authornote}[2]{{\sf\small\color{red}{[#1: #2]}}}
\newcommand{\Authornote}[2]{}
\newtheorem{theorem}{Theorem}[section]
\newtheorem{lemma}[theorem]{Lemma}
\newtheorem{corollary}[theorem]{Corollary}
\newtheorem{proposition}[theorem]{Proposition}
\newtheorem{notation}[theorem]{Notation}
\newtheorem{question}[theorem]{Question}
\newtheorem{observation}[theorem]{Observation}
\newtheorem{claim}[theorem]{Claim}
\newtheorem{fact}[theorem]{Fact}
\theoremstyle{definition}
\newtheorem{definition}[theorem]{Definition}
\newtheorem{convention}[theorem]{Convention}
\def\e{\epsilon}
\newcommand{\VC}{\textnormal{VC}}
\newcommand{\SVC}{\textnormal{SVC}}
\newcommand{\PS}{\textnormal{PS}}
\newcommand{\vdisc}{\textnormal{vdisc}}
\newcommand{\calL}{\mathcal{L}}
\newcommand{\calF}{\mathcal{F}}
\newcommand{\calH}{\mathcal{H}}
\newcommand{\calP}{\mathcal{P}}
\newcommand{\calE}{\mathcal{E}}
\newcommand{\calG}{\mathcal{G}}
\newcommand{\calQ}{\mathcal{Q}}
\newcommand{\calR}{\mathcal{R}}
\newcommand{\calS}{\mathcal{S}}
\newcommand{\calU}{\mathcal{U}}
\newcommand{\calX}{\mathcal{X}}
\DeclareMathOperator{\Tw}{\textrm{Tw}}
\DeclareMathOperator{\reg}{\textrm{reg}}
\DeclareMathOperator{\Irr}{\textnormal{AP}}
\DeclareMathOperator{\sm}{\textrm{sm}}
\DeclareMathOperator{\bg}{\textrm{big}}
\def\Forb{\operatorname{Forb}}
\begin{document}
\title[]{On the growth of weak regular partitions in 3-uniform hypergraphs}
\author{C. Terry}\thanks{The author was partially supported by NSF CAREER Award DMS-2115518 and a Sloan Research Fellowship}
\address{Department of Mathematics, Statistics and Computer Science, University of Illinois at Chicago, Chicago, IL, USA}
\email{caterry@uic.edu}

\begin{abstract}
This paper studies the asymptotic growth of regular partitions in hereditary properties of $3$-uniform hypergraphs.  The focus is on the notion of weak hypergraph regularity, first developed by Chung and Haviland--Thomason.  Given a hereditary property of $3$-uniform hypergraphs $\mathcal{H}$, we define a function $M_{\mathcal{H}}\colon(0,1)\rightarrow \mathbb{N}$ by letting $M_{\mathcal{H}}(\epsilon)$ be the smallest integer $M$ such that all sufficiently large elements of $\mathcal{H}$ admit a weak regular partition of size at most $M$.   We show that the asymptotic growth rate of such a function falls into one of four categories: constant, polynomial,  between a single and a double exponential, or tower.   This is the second in a series of papers about the growth of different kinds of regular partitions in $3$-uniform hypergraphs, and the results of this paper are a crucial component in part 3 of the series, which considers vertex partitions associated to a stronger notion of hypergraph regularity. 
\end{abstract}
\maketitle
\tableofcontents{}

\section{Introduction}

This is the second in a series of papers about the growth of regular partitions in hereditary properties of graphs and hypergraphs.  A \emph{hereditary graph property} $\calH$ is a class of finite graphs closed under induced subgraphs and isomorphisms.  There is a rich literature studying structural dichotomies among hereditary graph properties and their generalizations.  Examples include dichotomies related to speeds \cite{BaBoMo, BaBoMo1, BaBoMo2, BBW2, BBW1,Alekseev2,Alekseev1,DN,BoTh2,BoTh1, BBSS, Terry.2018, BDEP, TL}, bounds for the removal lemma \cite{GishbolinerShapira},  sizes of cliques and anticliques \cite{erdos1, erdos2, NSP, EHsurvey, Fox.2017bfo, Malliaris.2014}, and regular partitions \cite{Lovasz.2010, Terry.2021a, Terry.2021b, Fox.2017bfo, Alon.2018is}.  Many of these results have substantive connections to model theory (see for example, \cite{Malliaris.2014, LT2, Braunfeld, domination,  BraunfeldLaskowski1}).  This paper  and its companions \cite{Terry.2024a,Terry.2024c,Terry.2024d}  explore dichotomies for bounds in Szemer\'{e}di's Regularity Lemma and its extensions to $3$-uniform hypergraphs.

Szemer\'{e}di's Regularity Lemma provides structural decompositions for finite graphs. Informally speaking, the regularity lemma says that any large finite graph can be partitioned into a bounded number of pieces, so that most pairs of pieces behave quasirandomly.  We state here a version of the regularity lemma and refer the reader to Section \ref{ss:reghom} for precise definitions.

\begin{theorem}[Szemer\'{e}di \cite{Szemeredi.1978}]\label{thm:sz}
For all $\e>0$ there exists $M=M(\e)$ so that any finite graph has an $\e$-regular partition with at most $M$ parts.
\end{theorem}

Proofs of Theorem \ref{thm:sz} yield a tower-type dependence between $\e$ and $M(\e)$, where the \emph{tower function}, $\Tw\colon \mathbb{Z}^{\geq 1}\rightarrow \mathbb{Z}^{\geq 1}$, is defined by setting $\Tw(1)=1$, and for $i>1$ setting  $\Tw(i)=2^{\Tw(i-1)}$.\footnote{We extend this to a function $\Tw\colon\mathbb{R}^{\geq 1}\rightarrow\mathbb{Z}^{\geq 1}$ by setting $\Tw(x)=\Tw(\lceil x\rceil)$.}  This kind of dependence was shown to be necessary by Gowers, who proved $M(\e)\geq \Tw(\e^{-1/16})$ \cite{Gowers.1997}.  Essentially tight upper and lower bounds are now known. Both are due to Fox and Lov\'{a}sz, although they are formulated for a slightly different version of regularity than the one used in this paper.  In our context, their results yield $ \Tw(\Omega(\e^{-2}))\leq M(\e)\leq \Tw(O(\e^{-4}))$ \cite{Fox.2014}.  

On the other hand, it is known that under certain hypotheses, the dependence of $M(\e)$ on $\e$ is much better.  In particular, work of Alon--Fischer--Newman and Lov\'{a}sz--Szegedy showed a polynomial type dependence suffices for graphs of uniformly bounded VC-dimension \cite{Alon.2000,Lovasz.2010}. The partitions constructed in these papers are not only regular, but also \emph{homogeneous}, meaning most pairs of parts have edge density near $0$ or $1$.

The results discussed above produce a dichotomy for hereditary graph properties, as was first observed by Alon--Fox--Zhao \cite{Alon.2018is}.  This dichotomy is stated in terms of a function which measures the growth of the bound $M(\e)$ in Theorem \ref{thm:sz}, when restricted to only the graphs in a fixed hereditary property $\calH$.

\begin{definition}\label{def:M}
Suppose $\calH$ is a hereditary graph property.  Define $M_{\calH}\colon(0,1)\rightarrow \mathbb{N}$ by letting $M_{\calH}(\e)$ be the smallest integer $M$ so that any sufficiently large graph in $\calH$ has an $\e$-regular partition with at most $M$ parts.
\end{definition}

 In analogy to the question posed in \cite{ZitoSch} about speeds of hereditary graph properties, one can pose the following question about $M_{\calH}(\e)$. 

\begin{question}\label{q:M}
What are the possible asymptotic behaviors of the function $M_{\calH}\colon(0,1)\rightarrow \mathbb{N}$ when $\calH$ ranges over all hereditary graph properties?
\end{question}

Here, by ``asymptotic," we mean as $\e\rightarrow 0$.  Alon--Fox--Zhao observed in \cite{Alon.2018is} that such a function cannot exhibit arbitrary behavior.  Specifically, the results of Alon--Fischer--Newman and Lov\'{a}sz--Szegedy  \cite{Alon.2000,Lovasz.2010}, in conjunction with the work of Fox--Lov\'{a}sz  \cite{Fox.2014} imply a dichotomy, or ``jump," in the possible behavior of $M_{\calH}(\e)$, based on whether the VC-dimension of $\calH$ is finite or infinite (for precise definitions see Section \ref{ss:vc}).

\begin{theorem}\label{thm:vcjump}
Suppose $\calH$ is a hereditary graph property.  Then one of the following holds. 
\begin{enumerate}
\item $\VC(\calH)=\infty$.  In this case $M_{\calH}(\e)\geq \Tw(\Omega(\e^{-2}))$.
\item $\VC(\calH)<\infty$. In this case, there is a constant $C>0$ so that $M_{\calH}(\e)\leq  \e^{-C}$.
\end{enumerate}
\end{theorem}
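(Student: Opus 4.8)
The plan is to split on whether $VC(\calH)$ is finite; each alternative will be witnessed by transferring an already-known bound for the class of all graphs to the subclass $\calH$. For alternative (2) the relevant input is the Fox--Pach--Suk polynomial bound for graphs of bounded VC-dimension (Theorem~\ref{thm:foxpachsuk}), and for alternative (1) it is the tower lower bound of \cite{Fox.2014}; the only real work lies in the second transfer.

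\emph{The case $VC(\calH)<\infty$.} Put $d=VC(\calH)$. Since $\calH$ is hereditary and $VC(\calH)$ is the supremum of $VC(G)$ over $G\in\calH$, every graph in $\calH$ has VC-dimension at most $d$. Theorem~\ref{thm:foxpachsuk} then supplies, for a constant $C=C(d)$, an $\e$-regular (in fact homogeneous) partition with at most $\e^{-C}$ parts for every graph in $\calH$. As $C$ does not depend on the particular graph, $M_{\calH}(\e)\leq\e^{-C}$, which is alternative (2); this direction is immediate once Theorem~\ref{thm:foxpachsuk} is available.

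\emph{The case $VC(\calH)=\infty$.} Here I want to produce, for each $\e\in(0,1)$, arbitrarily large graphs in $\calH$ with no $\e$-regular partition of size below $Tw(2+\e^{-2}/16)$; since $M_{\calH}(\e)$ must be a valid bound for all large members of $\calH$, this forces alternative (1). Fox and Lov\'{a}sz \cite{Fox.2014} construct, for the class of all graphs, a family of extremal examples with a tower lower bound of exactly this type, so it suffices to realize such a family inside $\calH$. To do this I would use infinitude of $VC(\calH)$ in the form that for every $N$ some member of $\calH$ contains a shattered set $A$ with $|A|=N$ together with a transversal of witnesses $\{v_B : B\subseteq A\}$, and then run a two-stage Ramsey cleanup --- first on the pairs inside $A$, then on the pairs inside a suitably chosen family of witnesses --- to extract a large sub-configuration whose two sides are each homogeneous (a clique or an independent set) while the bipartite adjacency between them still realizes a prescribed pattern, namely the one determined by which subsets $B$ are retained. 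One then checks that the hierarchical between-scales density pattern underlying the Fox--Lov\'{a}sz construction can be encoded in such a configuration --- complementing a side if the Ramsey step produced a clique there, which leaves all $\e$-regularity parameters unchanged --- and that any passage to a blow-up that may be needed to obtain genuine parts rather than single vertices changes the number of required $\e$-regular parts by at most a bounded factor, hence does not disturb the tower lower bound.

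\emph{The main obstacle.} The crux is precisely this last coding step: showing that the Fox--Lov\'{a}sz extremal graphs, or any family with the same tower lower bound, embed into an \emph{arbitrary} hereditary property of infinite VC-dimension rather than merely into some convenient one. The delicate points are (i) the Ramsey cleanup of the witness set is extremely expensive in size and, worse, one does not get to prescribe which neighborhood patterns survive it, so either the extraction must be carried out more carefully, or one must work with a variant of the bad construction that is robust to altering the edges inside its two parts; and (ii) the clique-versus-independent alternative produced by each application of Ramsey's theorem must be tracked through the construction. This is the argument indicated in \cite{Alon.2018is}, which I would follow, citing \cite{Fox.2014} for the extremal graphs and \cite{Fox.2017bfo} for the sharpest known form of Theorem~\ref{thm:foxpachsuk}.
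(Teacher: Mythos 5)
Your alternative (2) is essentially the paper's argument: once $\VC(\calH)=k<\infty$, the Fox--Pach--Suk homogeneous-partition theorem for graphs (Theorem~\ref{thm:foxpachsukagain}, combined with Fact~\ref{fact:homub}) gives $M_{\calH}(\e)\leq O(\e^{-6k-3})$ with a constant depending only on $k$, which is exactly what the paper invokes in Corollary~\ref{cor:fastgraphs}.

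For alternative (1) your route diverges from the paper's, and the divergence sits precisely where your sketch has its admitted gap. You propose to embed a Fox--Lov\'asz-type extremal graph directly into $\calH$ by extracting a large shattered set with a transversal of witnesses and Ramsey-cleaning the internal edges on each side; you rightly observe that this cleanup is expensive and that one does not control which neighbourhood patterns survive it, so it is far from clear that the extremal pattern can be realized after the cleanup. The paper never confronts this problem. It works with $\bip(\calH)$: since $\VC(\calH)=\infty$, the class $\bip(\calH)$ contains \emph{every} bipartite graph (every bipartite graph is an induced subgraph of $U(k)$ for $k$ large), and the Fox--Lov\'asz extremal graphs are themselves bipartite, so $Tw(c\e^{-2})\leq M_{\bip(\calH)}(\e)$ holds immediately, with no Ramsey step and no worry about internal edges. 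The transfer back to $\calH$ is then a deterministic partition manipulation, $M_{\bip(\calH)}(2\e)\leq 2M_{\calH}(\e^9)+1$ (an $\e^9$-regular partition of $G$ induces a $2\e$-regular partition of $\bip(G)$ with at most $2t+1$ parts), which converts the tower lower bound on $M_{\bip(\calH)}$ into one on $M_{\calH}$. This dissolves exactly the obstacle you flag as the crux: nothing needs to be homogenized, no clique/independent alternative needs tracking, and no encoding step is needed. One more point: the $\e\mapsto\e^9$ loss in the transfer means the lower bound actually obtained is $Tw(\e^{-C})$ for some constant $C>0$, not the sharper $Tw(2+\e^{-2}/16)$ asserted in Theorem~\ref{thm:vcjump}; the latter figure coincides with the upper bound and appears to be a slip in the introduction, with Corollary~\ref{cor:fastgraphs} recording the lower bound in the correct form.
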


We will show in this paper that there is one additional jump, yielding the following coarse answer to Question \ref{q:M}.

\begin{theorem}\label{thm:alljump}
Suppose $\calH$ is a hereditary graph property.  Then one of the following holds. 
\begin{enumerate}
\item (Tower) For some constants $C,C'>0$, $\Tw(\e^{-C})\leq M_{\calH}(\e)\leq \Tw(\e^{-C'})$.
\item (Polynomial) For some constants $C,C'>0$, $\e^{-C}\leq M_{\calH}(\e)\leq \e^{-C'}$.
\item (Constant) There is a constant $C\geq 1$ so that $M_{\calH}(\e)=C$.
\end{enumerate}
\end{theorem}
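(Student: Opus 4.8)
The plan is to refine the dichotomy of Theorem~\ref{thm:vcjump}. If $VC(\calH)=\infty$, then Theorem~\ref{thm:vcjump}(1) gives $M_{\calH}(\e)\ge Tw(2+\e^{-2}/16)$, and the matching upper bound $M_{\calH}(\e)\le Tw(2+\e^{-2}/16)$ holds for every hereditary graph property by the Fox--Lov\'asz version of Szemer\'edi's regularity lemma \cite{Fox.2014}; this is case~(1). So assume $VC(\calH)<\infty$, whence Theorem~\ref{thm:vcjump}(2) already supplies the upper bound $M_{\calH}(\e)\le\e^{-C}$ required in case~(2). It then suffices to prove the internal dichotomy: either $M_{\calH}$ is bounded, or $M_{\calH}(\e)\ge\e^{-1+o(1)}$.

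The bounded case is formal. Since passing to a smaller regularity parameter only strengthens the requirements, $M_{\calH}$ is non-increasing as $\e\to 0$: an $\e_1$-regular partition with at most $M_{\calH}(\e_1)$ parts is also $\e_2$-regular for any $\e_2\ge\e_1$, so $M_{\calH}(\e_2)\le M_{\calH}(\e_1)$. A non-increasing, bounded, $\N$-valued function is eventually constant, giving case~(3).

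For the remaining implication, suppose $M_{\calH}$ is unbounded; I must show $M_{\calH}(\e)\ge\e^{-1+o(1)}$. Since this lower bound only asks that $\liminf_{\e\to0}\log M_{\calH}(\e)/\log(1/\e)\ge 1$, it is enough to produce, for each small $\e$, arbitrarily large $G\in\calH$ admitting no $\e$-regular partition with fewer than $c/\e$ parts, for a fixed $c>0$. The driving structural claim is: \emph{if $M_{\calH}$ is unbounded, then for every $r$ the property $\calH$ contains, with blow-up classes of arbitrarily large size, a balanced blow-up $B_r$ of some graph $F_r$ on $r$ vertices whose weighted shape belongs to a fixed finite list of ``irreducible'' patterns} --- morally a complete multipartite pattern, a disjoint-union-of-equal-cliques pattern, or a half-graph pattern, possibly with densities in $(0,1)$ and possibly complemented, where ``irreducible'' means no two classes of $F_r$ can be fused. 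Granting this, a routine computation shows that any such $B_r$ with large classes and $r\le 1/\e$ has no $\e$-regular partition with $o(r)$ parts: because $F_r$ is irreducible, a part meeting two distinct classes each in at least an $\e$-fraction of its vertices cannot be $\e$-regular with itself, so every ``good'' part is essentially contained in a single class; and although one may place up to an $\e$-fraction of the $|V(B_r)|^2$ vertex pairs in ``bad'' diagonal blocks, a Cauchy--Schwarz bookkeeping shows this saves only a constant factor, leaving $\Omega(r)$ parts when there are $r$ classes of linear size. Choosing $r=\lfloor 1/\e\rfloor$ (or $r=\lfloor 1/(\e\log(1/\e))\rfloor$ if one wants the classes genuinely concentrated) then yields $M_{\calH}(\e)\ge\Omega(r)=\e^{-1+o(1)}$.

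To prove the structural claim, I would argue as follows. Unboundedness together with monotonicity gives, for each $N$, a scale $\e_N>0$ such that for all $\e\le\e_N$ there are arbitrarily large $G\in\calH$ with no $\e$-regular partition of size $<N$. Fix a small $\e$ and such a $G$, and apply Szemer\'edi's regularity lemma to $G$ at a scale $\delta\le\e$ with the number of parts prescribed to be at least $N$; the resulting clusters --- boundedly many, for fixed $\delta$ and $N$ --- each have size tending to infinity with $|G|$. Bucketing the between-cluster densities and the within-cluster densities into finitely many windows and invoking the (ordered) Ramsey theorem extracts an arbitrarily large (as $N\to\infty$) set of clusters realizing a single pattern type; the essential additional point --- and the crux of the whole argument --- is that, since $G$ admits no small $\e$-regular partition, this pattern cannot be reducible (its between-density cannot coincide with its within-density, nor can it be merely ``trivially ordered''), for otherwise one could fuse the clusters inside each such reducible blob and assemble a small $\e$-regular partition of $G$, a contradiction. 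Undoing the regularization and, where the pattern is homogeneous, passing to clean subsets of the clusters (keeping the quasirandom pattern otherwise, which still forces the lower bound) exhibits the required blow-up inside $G$, hence in $\calH$ by heredity. I expect the main obstacle to be this extraction step --- carefully coupling the regularity partition, the Ramsey cleaning, and the ``fuse-down'' argument so that non-trivial complexity of $G$ is witnessed by a genuinely irreducible pattern --- together with verifying that no cleverer, non-block partition beats the $\Omega(1/\e)$ bound in the lower-bound computation; the rest is routine.
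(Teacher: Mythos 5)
Your overall case split is right, and two of the three cases are handled cleanly. Case (1) is exactly the paper's argument, via Fox--Lov\'asz and Fact~\ref{fact:bip}. Case (3) is handled by a different and slicker route: the paper proves that $\calB_{\calH}$ finite implies $M_{\calH}$ constant via the blow-up theorem and a $\sim$-class argument (Lemma~\ref{lem:finitegraph}), whereas you observe that $M_{\calH}$ is non-increasing in $\e$, so bounded forces eventually constant --- a valid shortcut that avoids the structural machinery entirely for this case. What remains is case (2), and there your proposal has two real gaps.

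First, your key lower-bound step --- ``a part meeting two distinct classes each in at least an $\e$-fraction of its vertices cannot be $\e$-regular with itself, so every good part is essentially contained in a single class'' --- is false for the matching pattern. In a simple blow-up of $M(m)$ with classes $U_1,\dots,U_m,V_1,\dots,V_m$, a part $P$ that meets $U_1$ and $U_2$ heavily sees density $0$ between $P\cap U_1$ and $P\cap U_2$ and also density $0$ within $P\cap U_1$, so $(P,P)$ is not forced to be irregular. Irreducibility only guarantees a \emph{witness} $z$ with $u z\in E$, $u'z\notin E$, and the witness class $V_z$ may lie entirely outside $P$. The paper's Lemma~\ref{lem:lbblowup} deals with this honestly: rather than forcing parts into single classes, it locates one large, mostly-regular part $P_{i_*}$, shows it must intersect many of the $U_i$ significantly, and then uses Claim~\ref{cl:irreg} (which exploits the cross-class structure of $Irr(m)$ and the regularity of pairs $(P_{i_*},P_j)$ with \emph{other} parts) to force a volume contradiction. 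Your ``routine Cauchy--Schwarz bookkeeping'' would have to reconstruct something like this two-step argument; it cannot go through a self-irregularity claim.

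Second, your structural claim (unbounded $M_{\calH}$ $\Rightarrow$ arbitrarily large balanced blow-ups of irreducible patterns) is only sketched, and the sketch lands on a weaker object than the paper actually uses. You propose to regularize, Ramsey-clean the densities, and fuse reducible blobs; but the fuse-down step is precisely the crux and is unaddressed, and the output of a density Ramsey argument is a \emph{weighted} pattern with densities in $(0,1)$, not an exact blow-up of a graph on which Lemma~\ref{lem:lbblowup}-type counting applies. The paper takes a cleaner combinatorial route: it defines $\calB_{\calH}$ (Definition~\ref{def:graphsbh}), shows via Lemma~\ref{lem:prime} / Theorem~\ref{thm:prime} that every large irreducible graph contains a half-graph, matching, or co-matching pattern in $Irr(k)$, and hence (Corollary~\ref{cor:bhchar}) $\calB_{\calH}$ infinite gives arbitrary blow-ups of $Irr(m)$ for all $m$ directly inside $\calH$. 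This bypasses the step-graphon extraction and gives a clean target for the lower-bound computation. If you pursued your version you would also need to close the loop between ``$M_{\calH}$ unbounded'' and ``$\calB_{\calH}$ infinite''; the paper instead makes $\calB_{\calH}$ the primary dichotomy parameter and deduces both the constant and polynomial cases from it.
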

 
 Theorem \ref{thm:alljump} is closely related to the analogue of Question \ref{q:M} for  \emph{homogeneous} partitions in graphs.  In particular, the work of \cite{Alon.2000,Lovasz.2010} on graphs of bounded VC-dimension naturally gives rise to the following analogue of Definition \ref{def:M} (see Section \ref{ss:vc} for more details).

\begin{definition}\label{def:Mhom2}
Suppose $\calH$ is a hereditary graph property.  Define $M^{\hom}_{\calH}\colon (0,1)\rightarrow \mathbb{N}\cup \{\infty\}$ by letting $M^{\hom}_{\calH}(\e)$ be the smallest integer $M$ (or $\infty$ if no such integer exists) such that any sufficiently large graph in $\calH$ has an $\e$-homogeneous partition with at most $M$ parts.
\end{definition}

One can deduce from \cite{Alon.2000,Lovasz.2010} that $M_{\calH}^{\hom}<\infty$ if and only if $\calH$ has finite VC-dimension.  An important aspect of our proof of Theorem \ref{thm:alljump} is showing that when $M_{\calH}^{\hom}<\infty$,  $M_{\calH}$ grows at roughly the same rate as $M_{\calH}^{\hom}$.  Specifically, we prove the following stronger version of Theorem \ref{thm:alljump}, which reflects the role played by $M_{\calH}^{\hom}$. 

\begin{theorem}\label{thm:alljumphom}
Suppose $\calH$ is a hereditary graph property.  Then one of the following holds. 
\begin{enumerate}
\item (Tower) $\calH$ has infinite VC-dimension and 
$$
\Tw(\Omega(\e^{-2}))\leq M_{\calH}(\e)\leq \Tw(O(\e^{-4})).
$$
\item (Polynomial) $\calH$ has VC-dimension $k<\infty$, and for some constant $c=c(k)$,
$$
\e^{-1+o(1)}\leq M_{\calH}(\e)\leq M_{\calH}^{\hom}(\e^3)\leq c\e^{-6k-3}.
$$
\item (Constant) There is a constant $C\geq 1$ so that $M_{\calH}(\e)=M^{\hom}_{\calH}(\e)=C$.
\end{enumerate}
\end{theorem}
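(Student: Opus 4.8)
The plan is to prove Theorem~\ref{thm:alljumphom} by combining the already-stated dichotomy of Theorem~\ref{thm:vcjump} with a finer analysis of the finite-VC case, plus a separate argument isolating the constant case. The theorem has three clauses, and the strategy is essentially: (i) handle the infinite-VC case trivially, (ii) in the finite-VC case establish the chain of inequalities $\e^{-1+o(1)}\leq M_{\calH}(\e)\leq M_{\calH}^{hom}(\e^3)\leq \e^{-6k-3}$, and (iii) determine exactly when the growth collapses to a constant.

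\textbf{Clause (Tower).} If $VC(\calH)=\infty$, Theorem~\ref{thm:vcjump}(1) already gives the lower bound $M_{\calH}(\e)\geq Tw(2+\e^{-2}/16)$, and the matching upper bound is the Fox--Lov\'{a}sz bound $Tw(2+\e^{-2}/16)$ quoted in the introduction (valid for \emph{all} graphs, hence for all of $\calH$). So there is nothing new to do here beyond citing these.

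\textbf{Clause (Polynomial).} Assume $VC(\calH)=k<\infty$. The upper bound $M_{\calH}^{hom}(\e^3)\leq \e^{-6k-3}$ is the quantitative version of the Alon--Fischer--Newman / Lov\'{a}sz--Szegedy theorem with the Fox--Pach--Suk bound: a graph of VC-dimension $k$ admits an $\e$-homogeneous partition into $\e^{-O(k)}$ parts, and chasing the constants in \cite{Fox.2017bfo} gives the stated exponent $6k+3$ (after the $\e\mapsto\e^3$ substitution). The middle inequality $M_{\calH}(\e)\leq M_{\calH}^{hom}(\e^3)$ is the routine observation that an $\e^3$-homogeneous partition is in particular an $\e$-regular partition: on a pair of parts whose density is within $\e^3$ of $0$ or $1$, every sub-pair has density within (say) $\sqrt{\e^3}=\e^{3/2}\leq\e$ of it, so the pair is $\e$-regular; one should state and prove this as a small lemma, being careful about the exact relationship between the homogeneity parameter and the regularity parameter (the cube is generous, so there is slack). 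The lower bound $M_{\calH}(\e)\geq \e^{-1+o(1)}$ is the genuinely new content of this clause: it must be shown that \emph{whenever} $\calH$ is not covered by the constant clause, $M_{\calH}$ grows at least like $\e^{-1+o(1)}$. The natural approach is to show that if $M_{\calH}$ is not eventually bounded by a constant, then $\calH$ contains, for arbitrarily large $n$, a graph whose every $\e$-regular partition needs $\gtrsim \e^{-1+o(1)}$ parts --- e.g.\ by extracting from $\calH$ a sequence witnessing unbounded "complexity," then lower-bounding regular partition size via a counting/entropy argument (the half-graph or a blow-up construction is the typical source of such lower bounds). This is where I expect the main obstacle to lie.

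\textbf{Clause (Constant).} The remaining case is when $M_{\calH}$ is eventually bounded by a constant; one must show it is then \emph{exactly} a constant $C$ for all sufficiently small $\e$, and that $M_{\calH}^{hom}=C$ as well (noting $VC(\calH)<\infty$ automatically here, since otherwise we are in the Tower case). The idea: $M_{\calH}(\e)$ is a non-increasing-in-$\e$, integer-valued function bounded above, hence eventually constant --- but that only shows it is \emph{ultimately} constant, which is what "for sufficiently small $\e$" means, so monotonicity plus integrality plus boundedness suffices for the shape of the statement. The content is to show that boundedness of $M_{\calH}$ forces structural tameness strong enough that the regular and homogeneous partition numbers agree: if every large $G\in\calH$ has an $\e$-regular partition into $\leq C$ parts for all small $\e$, then (by a compactness/ultraproduct argument, or by a direct Ramsey-type extraction) all large $G\in\calH$ are in fact "$O(1)$-homogeneous-blowup-like," so the homogeneous partition number is also bounded, and a refinement argument pins both to the same $C$.

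\textbf{Main obstacle.} The crux is the polynomial lower bound $M_{\calH}(\e)\geq\e^{-1+o(1)}$ and, relatedly, drawing the precise dividing line between the Polynomial and Constant clauses: one needs a clean dichotomy statement of the form "either all large $G\in\calH$ have uniformly bounded regular-partition number (Constant), or $\calH$ realizes enough complexity to force $\e^{-1+o(1)}$." Establishing that no intermediate behavior (e.g.\ $(\log\e^{-1})^2$ or $\e^{-1/2}$) can occur --- i.e.\ that there is a genuine jump from $C$ up to $\e^{-1+o(1)}$ --- is the delicate part, and is presumably where the hereditary closure of $\calH$ and a limiting argument producing a single "hard" configuration are used in an essential way.
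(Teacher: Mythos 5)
Your top-level plan is correct: you handle the infinite-VC case by citing Theorem~\ref{thm:vcjump} and Fox--Lov\'asz, you chain $M_{\calH}(\e)\le M^{hom}_{\calH}(\e^3)\le\e^{-6k-3}$ from the ``homogeneous implies regular'' observation plus Fox--Pach--Suk in the finite-VC case, and you correctly identify the two remaining tasks as (a) the lower bound $\e^{-1+o(1)}\le M_{\calH}(\e)$ outside the constant regime, and (b) drawing the dividing line between the polynomial and constant clauses. But for both (a) and (b) what you write is a restatement of the goal (``extract a sequence witnessing unbounded complexity,'' ``a compactness/ultraproduct argument, or a direct Ramsey-type extraction'') rather than a proof, and the mechanism actually used in the paper is not what you are gesturing at.

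The missing idea is a single combinatorial invariant that drives both. Define $x\sim_G y$ iff $x$ and $y$ have the same neighborhood outside $\{x,y\}$, call $G$ \emph{irreducible} if all $\sim_G$-classes are singletons (Definitions~\ref{def:sim}, \ref{def:irr}), and let
$$
\calB_{\calH}=\{G\ \text{irreducible}:\ \forall n\ \exists\ \text{an $n$-blowup of $G$ in }\calH\}
$$
(Definition~\ref{def:graphsbh}). The trichotomy is: $\calB_{\calH}$ finite gives the Constant clause, $\calB_{\calH}$ infinite with $\VC(\calH)<\infty$ gives Polynomial, and $\VC(\calH)=\infty$ gives Tower. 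For the Constant clause the paper does not use compactness or an ultraproduct: it uses Theorem~\ref{thm:blowupthm} (a removal-lemma consequence) to show that when $\calB_{\calH}$ is finite, every large $G\in\calH$ is $\e^8$-close to a graph $G'$ with at most $C=\max\{|V(G)|:G\in\calB_{\calH}\}$ many $\sim_{G'}$-classes, and those classes form an $\e$-regular (in fact homogeneous) partition with $\le C$ parts; a separate direct argument with an explicit $n$-blowup of a $C$-vertex irreducible graph in $\calB_{\calH}$ gives the matching lower bound $M_{\calH}(\e)\ge C$ (Lemma~\ref{lem:finitegraph}). For the Polynomial lower bound, the engine is a structure theorem for irreducible graphs (Theorem~\ref{thm:prime}, in the spirit of Chudnovsky--Kim--Oum--Seymour): any sufficiently large irreducible graph contains an induced member of $Irr(k)$, a family built from half-graphs, matchings, and anti-matchings. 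If $\calB_{\calH}$ is infinite it contains members of $Irr(m)$ for all $m$, hence $\calH$ contains arbitrarily large blowups of these, and Lemma~\ref{lem:lbblowup} shows by an explicit counting argument that an $n$-blowup of an $Irr(\e^{-1}/4)$ graph requires at least $\e^{-s_2}$ parts in any $\e$-regular partition for every $s_2<1$. Without $\calB_{\calH}$, the irreducibility notion, Theorem~\ref{thm:prime}, and Lemma~\ref{lem:lbblowup}, your proposal does not rule out an intermediate growth rate (say $\e^{-1/2}$), which is precisely the jump you flag as delicate; your plan is incomplete exactly there.

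One smaller point: in the middle inequality you reason that a density within $\e^3$ of $\{0,1\}$ forces subpair densities within $\sqrt{\e^3}$. The correct relationship (Proposition~\ref{prop:homimpliesregulargraphs}) is that $\delta$-homogeneous implies $\delta^{1/3}$-regular, because a subpair of relative size $\gamma$ can have density inflated by a factor $\gamma^{-2}$; your $\sqrt{\cdot}$ is the wrong exponent, though the cube is indeed the right amount of slack to absorb it.
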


The proof of Theorem \ref{thm:alljumphom} uses results from \cite{Fox.2014} in the upper and lower bounds of range (1), and a quantitative improvement on \cite{Alon.2000,Lovasz.2010} due to Fox--Pach--Suk \cite{Fox.2017bfo} in the upper bound in range (2). 

The main goal of this paper  is an analogue of Theorem \ref{thm:alljump} for $3$-uniform hypergraphs.  There are several inequivalent versions of hypergraph regularity.  The first such notion, called \emph{weak regularity}, was developed by Chung \cite{Chung.1991} and Haviland--Thomason \cite{Haviland.1989}.  While this notion is the most natural extension of graph regularity to the hypergraph setting, it has limitations in terms of applications (see \cite{Kohayakawa.2010}). For this reason, more complicated versions of hypergraph regularity were later developed by Gowers \cite{Gowers.20063gk, Gowers.2007} on the one hand, and Frankl,  Kohayakawa, Nagle,  R\"{o}dl, Skokan, and Schacht  \cite{Frankl.2002, Gowers.20063gk, Gowers.2007, Nagle.2006, Rodl.2005, Rodl.2005a} on the other.  For more details on the history of hypergraph regularity, we refer the reader to the introduction of \cite{Gowers.2007}, as well as \cite{Nagle.2013}. 

The topic of this paper is the growth of \emph{weak} regular partitions, while parts 3 and 4 of the series \cite{Terry.2024c, Terry.2024d} deal with the stronger regularity for $3$-uniform hypergraphs first developed by Gowers \cite{Gowers.20063gk, Gowers.2007}.  Perhaps surprisingly,  \cite{Terry.2024c} relies on connections between weak and strong regular partitions, and the results of this paper play a crucial role there.  On the other hand, the rest of this paper focuses exclusively on the ``weak" type of regularity. For this reason, we will often omit the word ``weak" in our discussions,  referring simply to regularity for $3$-uniform hypergraphs.  

Weak regularity for $3$-uniform hypergraphs is defined in close analogy to graph regularity.  Informally speaking, given a $3$-uniform hypergraph $H=(V,E)$, a triple of subsets $(X,Y,Z)$ is called \emph{$\e$-regular} if for all large subsets $X'\subseteq X$, $Y'\subseteq Y$, and $Z'\subseteq Z$, the density of edges between $X,Y,Z$ is within $\e$ of the density of edges between $X',Y',Z'$. 
An \emph{$\e$-regular partition} of $H$ is then a partition $\calP$ of $V$ so that at least $(1-\e)|V|^3$ many triples from $V^3$ lie in an $\e$-regular triple of parts from $\calP$  (see Section \ref{ss:reghom} for precise definitions).  

All $3$-uniform hypergraphs admit $\e$-regular partitions of this kind. We state below the version of this result  proved by Chung in \cite{Chung.1991}.  For a more precise statement of the bound involved, we refer the reader to the discussion following Theorem \ref{thm:chung}.

\begin{theorem}[Chung \cite{Chung.1991}]\label{thm:chungintro}
For all $\e>0$ there exists $T\leq \Tw(6\e^{-4})$ such that the following holds.  If $H=(V,E)$ is a sufficiently large $3$-uniform hypergraph, then there is some $1\leq t\leq T$ and an $\e$-regular partition of  $H$ with $t$ parts. 
\end{theorem}

In light of Theorem \ref{thm:chungintro}, we can define the $3$-uniform analogue of $M_{\calH}$.  

\begin{definition}\label{def:M3}
Given a hereditary property $\calH$ of $3$-uniform hypergraphs,  define the function $M_{\calH}\colon(0,1)\rightarrow \mathbb{N}$ by letting $M_{\calH}(\e)$ be the smallest integer $M$ so that every sufficiently large $3$-uniform hypergraph in $\calH$ has an  $\e$-regular partition with at most $M$ parts.
\end{definition}

Theorem \ref{thm:chungintro} implies that for any  hereditary property of $3$-uniform hypergraphs,  $\calH$, the integer $M_{\calH}(\e)$ is bounded above by a tower of height polynomial in $\e^{-1}$.  The only other previous  results about this function are due to Chernikov and Starchenko, and independently, Fox, Pach, and Suk, who showed that when the property $\calH$ has finite VC-dimension (see Section \ref{sec:weakreg}), then $M_{\calH}(\e)$ is bounded above by a polynomial in $\e^{-1}$ \cite{Fox.2017bfo,Chernikov.2016zb}.

The main result of this paper is  that any function of the form $M_{\calH}$ must fall into four distinct growth classes: constant, polynomial, between single and double exponential, or tower.

\begin{theorem}\label{thm:weak}
Suppose $\calH$ is a hereditary property of $3$-uniform hypergraphs.  Then one of the following holds.
\begin{enumerate}
\item (Tower) For some $C,C'>0$, $\Tw(\e^{-C})\leq M_{\calH}(\e)\leq \Tw(\e^{-C'})$.
\item (Almost Exponential)  For some $C,C'>0$, $2^{\e^{-C}}\leq M_{\calH}(\e)\leq 2^{2^{\e^{-C'}}}$.
\item (Polynomial) For some $C,C'>0$, $\e^{-C}\leq M_{\calH}(\e)\leq \e^{-C'}$.
\item (Constant) For some $C\geq 1$, $M_{\calH}(\e)=C$.
\end{enumerate}
\end{theorem}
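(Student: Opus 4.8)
The plan is to establish the four-way dichotomy by identifying, for each hereditary property $\calH$ of $3$-graphs, the relevant combinatorial invariants that control the size of weak regular partitions, and then showing that the value of these invariants pins $M_{\calH}$ into exactly one of the four bands. The first invariant is $\VC(\calH)$: by the Fox--Pach--Suk result quoted in the introduction, if $\VC(\calH)<\infty$ then $M_{\calH}(\e)\leq \e^{-C'}$, so the tower and almost-exponential bands can only occur when $\VC(\calH)=\infty$. Conversely, the standard construction (analogous to the graph case, lifting a shattered family into a $3$-graph via, say, a ``coding'' predicate on one coordinate) shows that infinite VC-dimension forces a lower bound; the point is that infinite VC-dimension alone is \emph{not} enough to force a tower lower bound for $3$-graphs (unlike graphs, per Theorem~\ref{thm:vcjump}), which is exactly why a third band appears.

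The second and key invariant distinguishes the tower band from the almost-exponential band. Here I would use a model-theoretic dividing line: the tower lower bound of Fox--Lov\'asz for graphs comes from a family witnessing the order property (or more precisely a configuration forcing nested $\e$-regular refinements), and for $3$-graphs one needs a ``two-dimensional'' version of this phenomenon. My plan is to isolate a combinatorial property of $\calH$ — call it, informally, the existence of an arbitrarily long ``iterated irregularity'' configuration in two coordinates versus only one — such that: (i) if $\calH$ contains $3$-graphs realizing the two-coordinate iterated configuration, one can run the Gowers--Fox--Lov\'asz lower bound construction with each of the two base levels itself requiring an exponential blow-up, producing $M_{\calH}(\e)\geq Tw(\e^{-C})$; and (ii) if $\calH$ only realizes a one-coordinate version, then an upper bound argument shows $M_{\calH}(\e)\leq 2^{2^{\e^{-C'}}}$ by reducing the hypergraph regularity iteration, after one ``expensive'' step, to a graph-type regularity iteration that terminates in a polynomial number of further refinements (each contributing only an exponential, not a tower). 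The almost-exponential \emph{lower} bound $2^{\e^{-C}}$ in this band, and the polynomial versus constant split at the bottom, should follow from the corresponding adaptations of the graph arguments: a property forcing unbounded but slowly-growing partitions (e.g.\ built from a single ``half-graph''-like slice) gives a polynomial lower bound via Fox--Pach--Suk's example, and if even that fails — i.e.\ $\calH$ has a bounded ``weak complexity'' — then $M_{\calH}$ is eventually constant because all large $H\in\calH$ are essentially quasirandom-on-a-fixed-partition.

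Concretely, the steps in order would be: (1) record the upper bounds — Chung's theorem for the tower ceiling, Fox--Pach--Suk for the finite-VC case — and set up the vertex-partition/regularity formalism from Section~\ref{ss:3reg}; (2) define the relevant complexity invariants of $\calH$ (finite/infinite VC-dimension, and the one-versus-two-coordinate iteration parameter, plus a boundedness parameter governing the constant band); (3) prove the upper bound in the almost-exponential band by a careful accounting of the regularity iteration — showing that in the absence of the two-coordinate configuration, after fixing a bounded ``exceptional'' structure one is left iterating a graph-like energy increment, giving a double exponential; (4) prove the matching lower bounds in each band by explicit constructions of families in $\calH$ (shattered families for infinite VC, a ``double'' Gowers-type construction for the tower band, a ``single'' one for almost-exponential, and the Fox--Pach--Suk polynomial example, suitably embedded); (5) verify the trichotomy of invariants is exhaustive, so every $\calH$ lands in exactly one band. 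I expect step (3) — the almost-exponential upper bound — to be the main obstacle, since it requires showing that the \emph{only} mechanism producing tower growth in weak $3$-graph regularity is the two-coordinate iteration, which means quantifying exactly how the energy-increment argument for hypergraphs degenerates to the graph case when that mechanism is absent. Getting the right definition of the one-versus-two-coordinate invariant, so that it simultaneously supports the upper bound in step (3) and the lower bound in step (4), will be the delicate part.
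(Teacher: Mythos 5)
Your high-level skeleton is right---four bands, a hierarchy of complexity invariants, ceilings from known theorems and floors from explicit constructions---but the specific invariants you propose are either wrong or too vague to carry the argument, and the hardest step of your plan is in fact outsourced in the paper, not proved.

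First, the polynomial/almost-exponential boundary is not $\VC(\calH)<\infty$ versus $\VC(\calH)=\infty$. A property $\calH$ can have $\VC(\calH)=\infty$ (because witnesses of arbitrarily large VC-dimension appear on sparse subsets) while still being density-close, in the sense of Definition~\ref{def:close}, to some $\calH'$ of finite VC-dimension; by Proposition~\ref{prop:close3graphs} such an $\calH$ still has polynomial $M_{\calH}$. The paper therefore works with the coarser invariant ``close to / far from finite VC-dimension'' (Definition~\ref{def:clvc}), and likewise ``close to / far from finite slicewise VC-dimension'' (Definition~\ref{def:clsvc}). Second, your ``one-coordinate versus two-coordinate iterated irregularity'' invariant for the tower/almost-exponential boundary is never pinned down; the paper's invariant is concrete: slicewise VC-dimension, i.e.\ whether $k\otimes U(k)\in\trip(\calH)$ for all $k$. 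This is what lets Observation~\ref{ob:universal3} put \emph{all} bipartite graphs into $\trip(\calH)$ via $n\otimes G$, so the tower lower bound is just the Fox--Lov\'asz graph construction lifted through Lemma~\ref{lem:timeslem}---not a ``double'' Gowers-type construction as you describe. Third, and most seriously: you flag step (3), the $2^{2^{\e^{-C'}}}$ upper bound in the almost-exponential band, as the crux, and propose to prove it by showing the hypergraph energy increment ``degenerates to the graph case.'' The paper does not attempt this; it cites Theorem~1.4 of Part~1 of the series, which directly gives double-exponential \emph{homogeneous} partitions for $3$-graphs of bounded slicewise VC-dimension, and then passes from $M_{\calH}^{hom}$ to $M_{\calH}$ via Proposition~\ref{prop:2.21}. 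Without that imported theorem, your step (3) is an open problem, and I do not see how the degeneration argument you sketch would go through: there is no obvious sense in which a weak-regularity iteration for $3$-graphs ``becomes'' a graph iteration after one step. Finally, you say the constant band occurs when $\calH$ has ``bounded weak complexity,'' but the actual characterization needs the blow-up class $\calB_{\calH}$ of Definition~\ref{def:bh3graph} (irreducible $3$-graphs all of whose blowups appear in $\calH$); the constant versus polynomial split is governed by whether $\calB_{\calH}$ is finite, using Theorem~\ref{thm:blowupthm3graphs} and Lemma~\ref{lem:prime} to produce $Irr(m)$-blowups for the polynomial lower bound. You would need to reconstruct all of this machinery for your plan to close.
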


In analogy to the graphs case, Theorem \ref{thm:weak} is closely related to homogeneous partitions.  In particular, using  the natural analogue of Definition \ref{def:Mhom2} for $3$-uniform hypergraphs (see Section \ref{sec:weakreg}), we will prove  the following stronger version of Theorem \ref{thm:weak}. 

\begin{theorem}\label{thm:weakhom}
Suppose $\calH$ is a hereditary property of $3$-uniform hypergraphs.  Then one of the following holds.
\begin{enumerate}
\item (Tower) $\Tw(\Omega(\e^{-1}))\leq M_{\calH}(\e)\leq \Tw(6\e^{-4})$.
\item (Almost Exponential) For some $C>0$,
$$
2^{\Omega(\e^{-1/8})}\leq M_{\calH}(\e)\leq M_{\calH}^{\hom}(\e^4)\leq 2^{2^{\e^{-C}}}.
$$
\item (Polynomial) For some $C>0$, $\Omega(\e^{-1/8})\leq M_{\calH}(\e)\leq M_{\calH}^{\hom}(\e^4)\leq \e^{-C}$.
\item (Constant) For some $C\geq 1$, $M_{\calH}(\e)=M_{\calH}^{\hom}(\e)=C$.
\end{enumerate}
\end{theorem}

The proof of Theorem \ref{thm:weakhom} draws on many results from the literature. We will show the growth of $M_{\calH}$ is determined by the structure of an auxiliary property, ${\bf B}_{\calH}$, consisting of those $3$-uniform hypergraphs $H$ with the property that arbitrarily large blowups of $H$ appear in $\calH$ (see Definition \ref{def:bhgraphs}).  We leverage a connection between $\calH$ and ${\bf B}_{\calH}$ which relies on the induced removal lemma for $3$-uniform hypergraphs due to Kohayakawa, R\"{o}dl, and Skokan \cite{Kohayakawa.2002}. Other ingredients  in the proof include a lower bound construction for graphs due to Fox--Lov\'{a}sz \cite{Fox.2014}, results of the author and Wolf \cite{Terry.2021b} characterizing when $M_{\calH}^{\hom}<\infty$,  bounds on the size of regular partitions of $3$-uniform hypergraphs with bounded slicewise VC-dimension (see \cite{Terry.2024a, GSW}), and the efficient regularity lemma for hypergraphs of small VC-dimension due to Chernikov--Starchenko \cite{Chernikov.2016zb} and Fox--Pach--Suk \cite{Fox.2017bfo}.   Our proofs provide explicit characterizations of the properties in each growth class according to forbidden and allowed substructures. An overview of these characterizations is provided in Appendix \ref{ss:charapp}. 

We now discuss some open problems.  The most glaring question regarding Theorem \ref{thm:weak} is the gap between the single and double exponential bounds in range (2). Since the first draft of this paper appeared on arXiv, Gishboliner, Shapira, and Wigderson \cite{GSW} obtained an optimal improvement on the main result of part 1 \cite{Terry.2024a}. Replacing our use of \cite{Terry.2024a} here with \cite{GSW} resolves the gap in range (2) by improving the upper bound to a single exponential.  

There are many other open problems relating to refinements of Theorem  \ref{thm:alljumphom}, especially in (2), the polynomial range.  For instance, can the lower bound in (2) be improved to $\Omega(\e^{-1})$?  What is the optimal exponent in the upper bound of range (2)? Could there be additional jumps within range (2)?   Analogous questions can be asked about Theorem \ref{thm:weakhom}, especially in ranges (2) and (3), the exponential and polynomial ranges.

\subsection{Outline} 
We now give an outline of the rest of the paper.  In Section \ref{sec:pre} we cover notation, basic definitions, and background material needed for the paper. In Section \ref{sec:graphs}, we prove Theorem \ref{thm:alljumphom}.  In Section \ref{sec:weakreg}, we introduce background on two higher arity generalizations of VC-dimension and their connections to homogeneous partitions. In Section \ref{sec:exptower}, we prove the existence of a jump between the double exponential and tower speeds.  In Section \ref{sec:blowuplemma}, we prove a general lower bound lemma for certain blowups of $3$-uniform hypergraphs.  In Section \ref{sec:polyexp}, we prove the existence of a jump between polynomial and exponential speeds.  Finally, in Section \ref{sec:constpoly}, we prove the existence of a jump between constant and polynomial speeds, after which we prove Theorem \ref{thm:weakhom}.  The appendix contains the proofs of several auxiliary results. These fall into four general categories: combinatorial characterizations of growth classes, standard lemmas, results about almost prime graphs and hypergraphs, and results about VC-dimension in hereditary properties of $3$-uniform hypergraphs.

\subsection{Acknowledgements} The author thanks the anonymous referee for their many helpful suggestions, and for identifying several gaps in an earlier version of this paper.  The author also thanks Hannah Sheats for identifying further oversights in an earlier version of this paper, and suggesting a simplification implemented in Lemma \ref{lem:12blowup}.

\section{Preliminaries}\label{sec:pre}

This section contains preliminaries for the rest of the paper.  Subsection \ref{ss:notation} covers basic notation and definitions.  Subsection \ref{ss:hps} contains background on hereditary properties.  Subsection \ref{ss:blowups} defines blowups  and  states an important result relating blowups to closeness.  Subsection \ref{ss:reghom} introduces weak regularity, and Subsection \ref{ss:hom} defines homogeneous partitions and related notions. Finally, Subsection \ref{ss:vcbasic} contains the definition of VC-dimension for set systems.  We note that for the sake of efficiency, many definitions will be stated for $k$-uniform hypergraphs. However, all theorems in the paper deal only with graphs or $3$-uniform hypergraphs.

\subsection{Basic notation and definitions}\label{ss:notation}

Given an integer $n\geq 1$, let $[n]=\{1,\ldots, n\}$.  Throughout the paper, we assume the natural numbers $\mathbb{N}$ start at $0$.  Given $r_1,r_2\in \mathbb{R}$ and $\e>0$, we use the notation $r_1=r_2\pm \e$  to mean that $r_1\in (r_2-\e,r_2+\e)$.  

For a set $V$ and an integer $k\geq 1$, define ${V\choose k}=\{X\subseteq V: |X|=k\}$.  Given a $k$-set $\{x_1,\ldots, x_k\}\in {V\choose k}$, we write $x_1\ldots x_k$ to denote the set $\{x_1,\ldots, x_k\}$. We will mainly use this convention when $k=2$ or $k=3$. In particular, we write $x_1x_2$ to denote  the $2$-element set $\{x_1,x_2\}$ and  $x_1x_2x_3$ to denote the $3$-element set $\{x_1,x_2,x_3\}$.  For each integer $\ell\geq 2$ and sets $X_1,\ldots, X_{\ell}$, we let $K_{\ell}[X_1,\ldots, X_{\ell}]$ denote the set of $\ell$-element sets $\{x_1,\ldots, x_{\ell}\}$ where $x_i\in X_i$ for each $1\leq i\leq \ell$.  We will primarily use this notation when $\ell$ is $2$ or $3$. In these cases we have 
\begin{align*}
K_2[X_1,X_2]&:=\{xy: x\in X_1, y\in X_2, x\neq y\}\text{ and }\\
K_3[X_1,X_2,X_3]&:=\{xyz: x\in X_1, y\in X_2, z\in X_3, x\neq y, y\neq z, x\neq z\}.
\end{align*}

An \emph{equipartition} of a set $V$ is a partition $V=V_1\cup \cdots \cup V_t$ satisfying $||V_i|-|V_j||\leq 1$ for all $1\leq i,j\leq t$.

Given an integer $k\geq 1$, a \emph{$k$-uniform hypergraph} is a pair $(V,E)$ where $V$ is a nonempty set and $E\subseteq {V\choose k}$.  To ease notation, we will refer to $k$-uniform hypergraphs as simply \emph{$k$-graphs}, and $2$-graphs as simply \emph{graphs}. By convention, all $k$-graphs in this paper have finite vertex sets.  

Two $k$-graphs $G=(V,E)$ and $G'=(V',E')$ are \emph{isomorphic}, denoted $G\cong G'$, if there is a bijection $f\colon V\rightarrow V'$ such that $\{v_1,\ldots, v_k\}\in E$ if and only if $\{f(v_1),\ldots, f(v_k)\}\in E'$. 

Given a $k$-graph $G$, $V(G)$ denotes its vertex set and $E(G)$ denotes its edge set.  An \emph{induced sub-$k$-graph of $G$} is a $k$-graph of the form $(V',E')$, for some $V'\subseteq V(G)$ and $E'=E(G)\cap {V'\choose k}$.  We let $G[V']$ denote the induced sub-$k$-graph of $G$ with vertex set $V'$, i.e.
$$
G[V']=\left(V', E(G)\cap {V'\choose k}\right). 
$$
 Given another $k$-graph $G'$, we say $G$ \emph{contains an induced copy of $G'$} if there is some $V'\subseteq V(G)$ so that $G[V']\cong G'$.  If $G$ contains no induced copy of $G'$, we say $G$ \emph{omits $G'$}. Given  $X\subseteq V(G)$, we say $X$ is a \emph{clique in $G$} if ${X\choose k}\subseteq E(G)$ and an \emph{anticlique in $G$} if ${X\choose k}\cap E(G)=\emptyset$ (we will omit the ``in $G$" when it is clear from context).  Similarly, if $E={V\choose k}$ we say $G$ is itself  a clique, and if $E=\emptyset$, we say $G$ is an anticlique.

Suppose $G=(V,E)$ is a $k$-graph.  We let $\overline{E}$ denote the \emph{ordered edge set of $G$}, i.e.
$$
\overline{E}:=\{(x_1,\ldots,x_k)\in V^k: x_1\ldots x_k\in E\}.
$$
 Given nonempty sets $X_1,\ldots, X_k\subseteq V$, the \emph{density of $(X_1,\ldots, X_k)$ in $G$} is  
$$
d_G(X_1,\ldots, X_k):=\frac{|\overline{E}\cap (X_1\times \cdots \times X_k)|}{|X_1|\cdots |X_k|}.
$$ 

Suppose now $G=(V,E)$ is a graph. For any $v\in V$, the \emph{neighborhood of $v$ in $G$} is 
$$
N_G(v)=\{x\in V: xv\in E\}.
$$
More generally, given two sets $A,B$, any subset $\calE\subseteq A\times B$, and any element $a_0\in A$, we write 
  $$
  N_{\calE}(a_0)=\{b\in B: (a_0,b)\in \calE\}.
  $$
 Suppose now $H=(V,E)$ is a $3$-graph. For any $x\neq y\in V$, we let
$$
N_H(x):=\Big\{uv\in {V\setminus\{x\}\choose 2}: xuv\in E\Big\}\text{ and }N_H(xy):=\{v\in V\setminus \{x,y\}: xyv\in E\}.
$$
More generally, given three sets $A,B,C$, any subset $\calE\subseteq A\times B\times C$, and any $a_0\in A$ and $b_0\in B$, we write 
  $$
  N_{\calE}(a_0)=\{(b,c)\in B\times C: (a_0,b,c)\in \calE\}\text{ and }N_{\calE}(a_0,b_0)=\{c\in C: (a_0,b_0,c)\in \calE\}.
  $$
 We will frequently refer back to the following notation.
 \begin{notation}\label{not:01nbrs}
Let $H=(V,E)$ be a $k$-graph. Define $E^1=E$ and $E^0={V\choose k} \setminus E$, and set
$$
H^1=H=(V,E^1)\text{ and }H^0=(V,E^0).
$$
 \end{notation}
  Given integers $\ell\geq k\geq 2$, we say that a $k$-graph $H$ is \emph{$\ell$-partite} if there exists a partition $V(H)=V_1\cup \cdots \cup V_{\ell}$ such that for every $e\in E(H)$ and $1\leq i\leq \ell$, $|e\cap V_i|\leq 1$.  In this case, we will write $H=(V_1\cup \cdots \cup V_{\ell},E)$ to denote that $H$ is $\ell$-partite with compatible partition given by $V(H)=V_1\cup \cdots \cup V_{\ell}$.  We emphasize this notation is meant to indicate the displayed sets $V_1,\ldots, V_{\ell}$ are pairwise disjoint.

We say a graph $G=(V,E)$ is \emph{bipartite} if it is $2$-partite, meaning there is a partition $V=V_1\cup V_2$ such that both $V_1$ and $V_2$ are anticliques.   We say $G$ is \emph{co-bipartite} if there is a partition $V=V_1\cup V_2$ so that   $V_1$ and $V_2$ are cliques. Finally, we say   $G$ is a \emph{split graph} if there is a partition $V=V_1\cup V_2$ so that  $V_1$ is a clique, and $V_2$ is an anticlique.  In all of these cases, we will write $G=(V_1\cup V_2, E)$ to indicate $V_1, V_2$ are the relevant cliques/anticliques.

\subsection{Hereditary properties}\label{ss:hps}

We begin by defining a hereditary $k$-graph property.   

\begin{definition}\label{def:HP}
A \emph{hereditary $k$-graph property} is a nonempty class of finite $k$-graphs closed under induced sub-$k$-graphs and isomorphisms.   
\end{definition}

When $k=2$, we refer to these as simply  \emph{hereditary graph properties}. It will at times be convenient to have notation for the class of all finite $k$-graphs.

\begin{definition}\label{def:allkgraphs}
For each integer $k\geq 1$,  $\calG^{(k)}$ denotes the class of all finite $k$-graphs.
\end{definition}

We next set notation for the class of finite $k$-graphs omitting a fixed collection of induced sub-$k$-graphs. 

\begin{definition}\label{def:forb}
Suppose $\calF$ is a class of finite $k$-graphs. Define
$$
\Forb(\calF)=\{G\in \calG^{(k)}: \text{ $G$ omits all elements of $\calF$ as induced sub-$k$-graphs}\}.
$$
\end{definition} 

It is well-known that every nonempty class of the form $\Forb(\calF)$ is a hereditary $k$-graph property, and conversely, every hereditary $k$-graph property has the form $\Forb(\calF)$ for some class $\calF$ of finite $k$-graphs. 

We next define a notion of ``closeness," first for $k$-graphs, and then for classes of $k$-graphs.

\begin{definition}
Suppose $H=(V,E)$ and $H'=(V,E')$ are two $k$-graphs on the same vertex set and $\delta>0$.  We say $H$ and $H'$ are \emph{$\delta$-close} if $|\overline{E}\Delta \overline{E'}|\leq \delta |V|^k$.
\end{definition}

\begin{definition}\label{def:close}
Suppose $\calH$ and $\calH'$ are classes of finite $k$-graphs.  We say $\calH$ is \emph{close to} $\calH'$ if for all $\delta>0$ there is an integer $N\geq 1$ such that for all $H\in \calH$ on at least $N$ vertices, $H$ is $\delta$-close to some $H'\in \calH'$ on the same vertex set.
\end{definition}

 From the perspective of this paper, a hereditary $k$-graph property is only interesting if it contains arbitrarily large $k$-graphs.  To avoid having to repeatedly make exceptions to deal with the trivial cases where this is not true, we set the following convention for the rest of the paper.

\begin{convention}\label{con:inf}
From here onwards, all hereditary $k$-graph properties are assumed to contain  arbitrarily large $k$-graphs.
\end{convention}

\subsection{Blowups and closeness}\label{ss:blowups}
It will be important for us to have some criteria for determining when one hereditary  property is close to another, in the sense of Definition \ref{def:close}.   In this section we present such criteria in terms of blowups.  We begin by defining blowups of $k$-graphs.  

\begin{definition}\label{def:blowupgraph}
Suppose $k\geq 2$ is an integer and $H=(U, E)$ is a $k$-graph.  
\begin{enumerate}
\item A  \emph{blowup of $H$}  is any $k$-graph $\mathbf{H}$ with vertex set of the form  $V(\mathbf{H})=\bigsqcup_{u\in U}V_u$ and edge set $E(\mathbf{H})$ satisfying
$$
\bigcup_{u_1\ldots u_k\in E}K_k[V_{u_1},\ldots, V_{u_k}]\subseteq E(\mathbf{H})\text{ and }\left(\bigcup_{u_1\ldots u_k \in {U\choose k}\setminus E}K_k[V_{u_1},\ldots, V_{u_k}]\right)\cap E(\mathbf{H})=\emptyset.
$$
\item Given an integer $n\geq 1$, we say $\mathbf{H}$ from (1) is an \emph{$n$-blowup of $H$} if $|V_u|=n$ for all $u\in U$. 
\item Given integers $n_1,n_2\geq 1$ and sets $X,Y\subseteq U$, we say ${\bf H}$ from (1) is an \emph{$(n_1,n_2; X,Y)$-blowup of $H$} if $|V_x|=n_1$ for all $x\in X$ and $|V_y|=n_2$ for all $y\in Y$.
\end{enumerate}
\end{definition}

We note item (3) in Definition \ref{def:blowupgraph} will only appear in our results on $3$-graphs.  

We now define an auxiliary class associated to a hereditary $k$-graph property $\calH$ which will play a crucial role in our results.

\begin{definition}\label{def:bhgraphs}
 Suppose $k\geq 2$ and $\calH$ is a hereditary $k$-graph property.  Recalling that $\calG^{(k)}$ denotes the class of all finite $k$-graphs, we set
$$
{\bf B}_{\calH}=\{H\in \calG^{(k)}: \text{ for all integers $n\geq 1$, $\calH$ contains an $n$-blowup of $H$}\}.
$$ 
\end{definition}

This auxiliary class ${\bf B}_{\calH}$ is always a hereditary $k$-graph property contained in $\calH$, as we now show.

\begin{fact}\label{fact:bhhgraphs}
For any hereditary $k$-graph property $\calH$, ${\bf B}_{\calH}$ is a hereditary $k$-graph property satisfying ${\bf B}_{\calH}\subseteq \calH$.
\end{fact}
\begin{proof}
We first show ${\bf B}_{\calH}\subseteq \calH$. Fix $H\in {\bf B}_{\calH}$. By definition of ${\bf B}_{\calH}$, $\calH$ contains a $1$-blowup ${\bf H}$ of $H$. By definition of a $1$-blowup, ${\bf H}\cong H$.   Since $\calH$ is closed under isomorphism, $H\in \calH$.  

We next show ${\bf B}_{\calH}$ is a hereditary $k$-graph property.  In keeping with Convention \ref{con:inf}, we need to show ${\bf B}_{\calH}$ contains arbitrarily large $k$-graphs.  By Convention \ref{con:inf}, $\calH$ contains arbitrarily large $k$-graphs. Consequently, by Ramsey's theorem, and since $\calH$ is closed under induced sub-$k$-graphs, $\calH$ contains either arbitrarily large cliques, or arbitrarily large anticliques.  Since a clique (respectively anticlique) on $nt$ vertices is an $n$-blowup of a clique  (respectively anticlique) on $t$ vertices, this implies ${\bf B}_{\calH}$ contains either arbitrarily large cliques or arbitrarily large anticliques. In particular, ${\bf B}_{\calH}$ contains arbitrarily large $k$-graphs.  

It is clear that ${\bf B}_{\calH}$ is closed under isomorphisms by definition.  Therefore, we just have left to show ${\bf B}_{\calH}$ is closed under induced sub-$k$-graphs. Suppose $H=(U,E)\in {\bf B}_{\calH}$ and $H'=(U',E')$ is an induced sub-$k$-graph of $H$.  Fix $n\geq 1$. Since $H\in {\bf B}_{\calH}$, there is an $n$-blowup $\mathbf{H}$ of $H$ with $\mathbf{H}\in \calH$.  By definition, $\mathbf{H}$ has vertex set of the form $V(\mathbf{H})=\bigsqcup_{u\in U}V_u$ and edge set $E(\mathbf{H})$ satisfying
$$
\bigcup_{u_1\ldots u_k\in E}K_k[V_{u_1},\ldots, V_{u_k}]\subseteq E(\mathbf{H})\text{ and }\left(\bigcup_{u_1\ldots u_k \in {U\choose k}\setminus E}K_k[V_{u_1},\ldots, V_{u_k}]\right)\cap E(\mathbf{H})=\emptyset.
$$
Let $\mathbf{H}'$ be the induced sub-$k$-graph of $\mathbf{H}$ with vertex set $V(\mathbf{H}')=\bigsqcup_{u\in U'}V_u$. By definition, $\mathbf{H}'$ is an $n$-blowup of $H'$.  Since $\calH$ is hereditary, $\mathbf{H}'\in \calH$. We have now shown $\calH$ contains an $n$-blowup of $H'$ for all $n\geq 1$, and consequently, $H'\in {\bf B}_{\calH}$.
\end{proof}

We now state the theorem which gives us the crucial connection between ${\bf B}_{\calH}$ and the closeness of Definition \ref{def:close}.  

\begin{theorem}\label{thm:blowupthm}
Suppose $\calH$ is a hereditary graph (respectively $3$-graph) property and $\calF$ is a finite collection of finite graphs (respectively $3$-graphs). Then the following are equivalent.
\begin{enumerate}
\item $\calH$ is close to $\Forb(\calF)$. 
\item $\calF\cap {\bf B}_{\calH}=\emptyset$.  
\end{enumerate}
\end{theorem}

In the setting of graphs, Theorem \ref{thm:blowupthm} is a well-known (but non-trivial) consequence of the regularity lemma and the induced removal lemma \cite{Alon.2000}.  In the setting of $3$-graphs, Theorem \ref{thm:blowupthm}  is likely well-known, but the author could not find an explicit proof in the literature.  It can be proved using the stronger regularity for $3$-graphs developed by Frankl--R\"{o}dl and Gowers \cite{Frankl.2002, Gowers.20063gk} and an induced removal lemma for $3$-graphs \cite{Kohayakawa.2002}.  For the sake of completeness, we have provided a proof of Theorem \ref{thm:blowupthm} in part 3 of this series, which deals with this stronger form of regularity (see the appendix of \cite{Terry.2024c}).  A sketch of the proof of the graphs version also appears there. 

We will freely use the fact that Theorem \ref{thm:blowupthm}  applies not only to finite families $\calF$, but also to families $\calF$ containing only finitely many non-isomorphic  graphs/$3$-graphs.  It is worth noting that Theorem \ref{thm:blowupthm} is still true when $\calF$ contains infinitely many non-isomorphic graphs/$3$-graphs.  In the setting of graphs, this is due to work of Alon and Shapira  \cite{Alon.2008}. In the setting of $3$-graphs, this can be deduced from work of R\"{o}dl and Schacht \cite{Rodl.2009}.  We will not use these stronger results in this paper.

\subsection{Background on weak regularity}\label{ss:reghom}

We begin by defining weak regularity for $k$-graphs, first developed by Chung \cite{Chung.1991}  and Haviland--Thomason \cite{Haviland.1989}. 

\begin{definition}\label{def:regularity}
Fix an integer $k\geq 2$, and suppose $H=(V,E)$ is a $k$-graph.  Given nonempty sets $X_1,\ldots, X_k\subseteq V$, we say $(X_1,\ldots, X_k)$ is \emph{$\e$-regular with respect to $H$} if for all $X_1'\subseteq X_1,\ldots, X'_k\subseteq X_k$ satisfying $|X_i'|\geq \e |X_i|$ for each $i\in [k]$,  it  holds that
$$
\Big|d_H(X_1,\ldots, X_k)-d_H(X_1',\ldots, X_k')\Big|\leq \e.
$$ 
\end{definition}

We now state the definition of an $\e$-regular partition.

\begin{definition}\label{def:regp}
Fix an integer $k\geq 2$, and suppose $H=(V,E)$ is a $k$-graph.  A partition $\calP$ of $V$ is  \emph{$\e$-regular with respect to $H$} if 
$$
\left| \bigcup_{(X_1,\ldots, X_k)\in \Sigma_{\reg}}X_1\times \cdots \times X_k\right|\geq (1-\e)|V|^k,
$$
where $\Sigma_{\reg}=\{(X_1,\ldots, X_k)\in \calP^k: (X_1,\ldots, X_k)\text{ is $\e$-regular with respect to $H$}\}$. In this case, we will call $\calP$ an \emph{$\e$-regular partition of $H$}.
\end{definition}

We will drop the ``with respect to $H$" from Definitions \ref{def:regularity} and \ref{def:regp} when $H$ is clear from context.  We note Definition \ref{def:regp} is slightly different from the typical definition of a regular partition. For instance, in the case of graphs, Definition \ref{def:regp} considers pairs of the form $(X,Y)$ where $X=Y$, and also does not require $\calP$ to be an equipartition.  Definition \ref{def:regp} is the correct formulation for the purposes of this paper. In particular, we need to allow sets to be ``regular with themselves" in order to allow regular partitions of constant size. For instance, if $\calH$ is the hereditary property consisting of all finite graphs with no edges,   we want the smallest possible regular partition to have size $1$.  This is exactly what Definition \ref{def:regp} permits.  

We next state the regularity lemma for $k$-graphs, using the bounds implicit in Chung's original paper \cite{Chung.1991}.  

\begin{theorem}[Chung \cite{Chung.1991}]\label{thm:chung}
Fix an integer $k\geq 2$,  and let $f\colon \mathbb{Z}^{\geq 1}\rightarrow \mathbb{Z}^{\geq 1}$ be defined by setting $f(x)=k$ if $x\leq k$ and  $f(x)=(x-1)2^{f(x-1)\choose k}$ for $x>k$.  Extend $f$ to a function from $\mathbb{R}^{\geq 1}\rightarrow \mathbb{R}^{\geq 1}$ by setting $f(x)=f(\lceil x\rceil)$.

For all $\e>0$ there exists $T\leq f(2\e^{-4})$ such that every sufficiently large $k$-graph has an $\e$-regular equipartition with at most $T$ parts.  
\end{theorem}

Note that for $f$ as in Theorem \ref{thm:chung} and $x>k$, $f(x)\leq 2^{f(x-1)^k}\leq 2^{2^{2^{f(x-1)}}}$. When $k=2$ or $k=3$, this implies $f(x)\leq \Tw(3x)$, and consequently, the bound in Theorem \ref{thm:chung} is at most $\Tw(6\e^{-4})$. For $k=3$, this yields the bound stated in Theorem \ref{thm:chungintro} in the introduction.  When $k=2$, this matches the shape of the upper bound for  Szemer\'{e}di's Regularity Lemma that can be deduced from  \cite{Fox.2014} (note the definition of regularity used in \cite{Fox.2014} differs formally from the one used here).  Theorem \ref{thm:chung} tells us the following functions are well defined.

\begin{definition}\label{def:M3inpaper}
Given a hereditary $k$-graph property $\calH$,  define $M_{\calH}\colon(0,1)\rightarrow \mathbb{N}$ by letting $M_{\calH}(\e)$ be the smallest integer $M$ so that every sufficiently large $k$-graph in $\calH$ has an  $\e$-regular partition with at most $M$ parts.
\end{definition}

We will use in Section \ref{sec:graphs} that when $\calH$ is a hereditary graph property, and $\calH$ is close to $\calH'$ (in the sense of Definition \ref{def:close}), then $M_{\calH}$ can be roughly bounded above by $M_{\calH'}$.  To prove this we will use the following well-known averaging lemma. 

\begin{lemma}\label{lem:averaging}
Let $a,b,\e\in (0,1)$ be such that $ab=\e$. Suppose $X$ is a finite set and $A\subseteq X$ satisfies $|A|\geq (1-\e)|X|$.  For any partition $\calP$ of $X$, if we let 
$$
\Sigma=\{Y\in \calP: |A\cap Y|\geq (1-a)|Y|\},
$$
 then we have $|\bigcup_{Y\in \Sigma}Y|\geq (1-b)|X|$.
\end{lemma}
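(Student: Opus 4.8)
The statement to prove is Lemma~\ref{lem:averaging}: if $ab=\e$, $A\subseteq X$ with $|A|\ge(1-\e)|X|$, and $\calP$ is a partition of $X$, then the union of the parts $Y$ with $|A\cap Y|\ge(1-a)|Y|$ has measure at least $(1-b)|X|$. The natural approach is a double-counting / Markov-type argument on the ``bad'' parts. Let $\Sigma'=\calP\setminus\Sigma=\{Y\in\calP: |A\cap Y|<(1-a)|Y|\}$, i.e.\ the parts where the complement $X\setminus A$ occupies more than an $a$-fraction. The plan is to bound $|\bigcup_{Y\in\Sigma'}Y|$ from above by $b|X|$, which is equivalent to the desired conclusion.

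\textbf{Key steps.} First, note $|X\setminus A|\le\e|X|$. Second, for each $Y\in\Sigma'$ we have $|Y\setminus A|=|Y|-|A\cap Y|>|Y|-(1-a)|Y|=a|Y|$, so summing over $Y\in\Sigma'$ (the parts are disjoint),
\begin{equation*}
\e|X|\ \ge\ |X\setminus A|\ \ge\ \sum_{Y\in\Sigma'}|Y\setminus A|\ >\ a\sum_{Y\in\Sigma'}|Y|\ =\ a\Big|\bigcup_{Y\in\Sigma'}Y\Big|.
\end{equation*}
Third, divide by $a$ (note $a>0$ since $ab=\e\in(0,1)$ forces $a\neq 0$) to get $|\bigcup_{Y\in\Sigma'}Y|<\e|X|/a=b|X|$, using $ab=\e$. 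Finally, $\bigcup_{Y\in\Sigma}Y=X\setminus\bigcup_{Y\in\Sigma'}Y$ has measure $\ge|X|-b|X|=(1-b)|X|$, as required. (One should be slightly careful about the strict versus non-strict inequality at the definition of $\Sigma'$, but since we only need $\ge(1-b)|X|$ at the end the strictness is harmless; if one prefers, replace $<$ by $\le$ throughout, which only weakens the bound in the safe direction.)

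\textbf{Main obstacle.} There is essentially no obstacle here — this is a clean pigeonhole estimate, which is why the paper flags it as an exercise. The only thing to watch is the bookkeeping of which parts go into $\Sigma$ versus its complement and keeping the chain of inequalities pointed the right way; the substitution $\e=ab$ is what converts the crude bound $\e|X|/a$ into the clean $b|X|$. No regularity, VC-dimension, or hypergraph machinery is needed for this particular lemma.
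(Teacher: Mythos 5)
Your proof is correct, and since the paper itself only remarks that "the proof is an exercise which appears in Part 1" without reproducing it here, there is no in-paper argument to diverge from; your Markov/pigeonhole count of $|X\setminus A|$ over the bad parts $\Sigma'=\calP\setminus\Sigma$, followed by dividing by $a$ and using $ab=\e$, is exactly the expected argument. The only small note is the degenerate case $\Sigma'=\emptyset$, where the strict inequality chain collapses to $\e|X|\ge 0>0$; you already flag that strict versus non-strict is harmless, and indeed the final bound $|\bigcup_{Y\in\Sigma'}Y|\le b|X|$ holds trivially there, so nothing is lost.
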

\begin{proof}
Suppose towards a contradiction $|\bigcup_{Y\in \Sigma}Y|< (1-b)|X|$.  Then 
$$
|X\setminus A|> \sum_{Y\in \calP\setminus \Sigma}a|Y|=a\sum_{Y\in \calP\setminus \Sigma}|Y|\geq ab|X|=\e|X|,
$$
a contradiction.
\end{proof}

\begin{proposition}\label{prop:closegraphs}
Suppose $\calH$ and $\calH'$ are hereditary graph properties and $\calH$ is close to $\calH'$. Then for all sufficiently small $\e>0$, $M_{\calH}(2\e)\leq M_{\calH'}(\e)$.  
\end{proposition}
\begin{proof}
Fix $\e>0$ sufficiently small, and let $\delta>0$ be sufficiently small compared to $\e$.  

Fix $G=(V,E)\in \calH$ with $|V|$ sufficiently large in terms of $\e$ and $\delta$.  We show $G$ has a $2\e$-regular partition with at most $M_{\calH'}(\e)$ parts.  Since $|V|$ is sufficiently large, there exists some $G'=(V,E')\in \calH'$ which is $\delta$-close to $G$.  Let $\calP$ be an $\e$-regular partition of $G'$ with at most $M_{\calH'}(\e)$ parts.  We show $\calP$ is $2\e$-regular with respect to $G$.  Define 
$$
\Sigma_{\reg}=\{(X,Y)\in \calP^2: (X,Y)\text{ is $\e$-regular with respect to $G'$}\}.
$$
Since $\calP$ is $\e$-regular with respect to $G'$, we have $|\bigcup_{(X,Y)\in \Sigma_{\reg}}X\times Y|\geq (1-\e)|V|^2$.  Since $G$ is $\delta$-close to $G'$, $|\overline{E}\Delta \overline{E'}|\leq \delta |V|^2$. Consequently, by Lemma \ref{lem:averaging}, if we set 
$$
\Sigma_{\textrm{close}}=\{(X,Y)\in \calP^2: |(\overline{E}\Delta \overline{E'})\cap (X\times Y)|\leq \sqrt{\delta}|X||Y|\},
$$
 then $|\bigcup_{(X,Y)\in \Sigma_{\textrm{close}}}X\times Y|\geq (1-\sqrt{\delta})|V|^2$. We now have 
$$
\left|\bigcup_{(X,Y)\in \Sigma_{\reg} \cap \Sigma_{\textrm{close}}}X\times Y\right|\geq (1-\e-\sqrt{\delta})|V|^2\geq (1-2\e)|V|^2,
$$
 where the last inequality is because $\delta$ is sufficiently small compared to $\e$.  Therefore, it suffices to show  that each pair $(X,Y)$ from $\Sigma_{\reg}\cap \Sigma_{\textrm{close}}$ is $2\e$-regular with respect to $G$. To this end, fix $(X,Y)\in \Sigma_{\reg}\cap \Sigma_{\textrm{close}}$, and suppose $X'\subseteq X$ and $Y'\subseteq Y$ satisfy $|X'|\geq 2\e |X|$ and $|Y'|\geq 2\e|Y|$.  Then by the triangle inequality,
\begin{align*}
&|d_{G}(X',Y')-d_{G}(X,Y)|\\
&\leq |d_{G}(X',Y')-d_{G'}(X',Y')|+ |d_{G'}(X',Y')-d_{G'}(X,Y)|+ |d_{G'}(X,Y)-d_{G}(X,Y)|\\
&\leq \frac{\sqrt{\delta}|X||Y|}{|X'||Y'|}+\e +\frac{\sqrt{\delta}|X||Y|}{|X||Y|}\\
&\leq \sqrt{\delta}(2\e)^{-2}+\e+\sqrt{\delta}\\
&\leq 2\e,
\end{align*} 
where the second inequality is because $(X,Y)\in \Sigma_{\reg}\cap \Sigma_{\textrm{close}}$, the third inequality is because $|X'|\geq 2\e|X|$ and $|Y'|\geq 2\e |Y|$, and the last inequality is because $\delta$ is sufficiently small compared to $\e$.   This shows $\calP$ is $2\e$-regular with respect to $G$, and consequently, $M_{\calH}(2\e)\leq M_{\calH'}(\e)$, as desired.
\end{proof}

The proof of Proposition \ref{prop:closegraphs} above can be adjusted to show $M_{\calH}(\e+o(1))\leq M_{\calH'}(\e)$, however this would not impact the bounds in our theorems  significantly.  We will also use the following analogue of Proposition \ref{prop:closegraphs} in the setting of $3$-graphs (we omit the proof as it is similar to the proof of Proposition \ref{prop:closegraphs}). 

\begin{proposition}\label{prop:close3graphs}
Suppose $\calH,\calH'$ are hereditary $3$-graph properties and $\calH$ is close to $\calH'$. Then for all sufficiently small $\e>0$, $M_{\calH}(2\e)\leq M_{\calH'}(\e)$.
\end{proposition}

\subsection{Background on homogeneity}\label{ss:hom}
We begin by defining homogeneous partitions. 

\begin{definition}\label{def:hom}
Suppose $H=(V,E)$ is a $k$-graph.
\begin{enumerate}
\item Given nonempty sets $X_1,\ldots, X_k\subseteq V$, we say $(X_1,\ldots, X_k)$ is \emph{$\e$-homogeneous with respect to $H$} if $d_H(X_1,\ldots, X_k)\in [0,\e)\cup (1-\e,1]$.  
\item We say a partition $\calP$ of $V$ is \emph{$\e$-homogeneous with respect to $H$} if 
$$
\left|\bigcup_{(X_1,\ldots, X_k)\in \Sigma_{\hom}}X_1\times \cdots \times X_k\right|\geq (1-\e)|V|^k,
$$
where $\Sigma_{\hom}=\{(X_1,\ldots, X_k)\in \calP^k: (X_1,\ldots, X_k)\text{ is $\e$-homogeneous with respect to $H$}\}$. In this case, we say   $\calP$ is an \emph{$\e$-homogeneous partition of $H$}.
\end{enumerate}
\end{definition}

We will drop the ``with respect to $H$" from Definition \ref{def:hom} when $H$ is clear from context. We can now define a more general version of the growth function in Definition \ref{def:Mhom2}.

\begin{definition}\label{def:Mhom}
Let $\calH$ be a hereditary $k$-graph property.  Define $M^{\hom}_{\calH}\colon (0,1)\rightarrow \mathbb{N}\cup \{\infty\}$ by letting $M^{\hom}_{\calH}(\e)$ be the smallest integer $M$ (or $\infty$ if no such integer exists) such that any sufficiently large $k$-graph in $\calH$ has an $\e$-homogeneous partition with at most $M$ parts.
\end{definition}

The function $M_{\calH}^{\hom}$ plays a crucial role  in both Theorem \ref{thm:alljumphom} for graphs, and Theorem \ref{thm:weakhom} for $3$-graphs.  In fact, we will show that when $M_{\calH}^{\hom}$ takes finite values, it largely controls the behavior of $M_{\calH}$.  Thus, we will need to understand when we can guarantee $M_{\calH}^{\hom}<\infty$.  In the graphs case, this question is addressed in Subsection \ref{ss:vc} on VC-dimension. In the setting of $3$-graphs, this is considered in Section  \ref{sec:weakreg},  which covers two versions of VC-dimension in $3$-graphs.   

We spend the rest of this subsection stating basic facts we will need about homogeneous partitions. First, in the setting of graphs, homogeneous pairs are well-known to be regular, as the next proposition shows.  The proof is an exercise we include for completeness.

\begin{proposition}\label{prop:homimpliesregulargraphs}
Let $\e \in (0,1)$. If $G=(V, E)$ is a graph, and $X,Y\subseteq V$ satisfy $d_G(X,Y)\in [0,\e)\cup (1-\e,1]$, then $(X,Y)$ is $\e^{1/3}$-regular with respect to $G$.
\end{proposition}
\begin{proof}
Assume first $d_G(X,Y)\leq \e$.  Fix $X'\subseteq X$, and $Y'\subseteq Y$ satisfying $|X'|\geq \e^{1/3}|X|$, and $|Y'|\geq \e^{1/3}|Y|$. Then 
$$
|\overline{E}\cap (X'\times Y')|\leq \e |X||Y| \leq \e (\e^{-1/3}|X'|)(\e^{-1/3}|Y'|)= \e^{1/3} |X'||Y'|.  
$$
We now have that  $0\leq d_G(X',Y')\leq \e^{1/3}$.  Combining with the fact that $0\leq d_G(X,Y)\leq \e$, we can conclude that $|d_G(X,Y)-d_G(X',Y')|\leq \e^{1/3}$. We omit the argument for the case $d_G(X,Y)\geq 1-\e$ as it is essentially identical.
\end{proof}

 Proposition  \ref{prop:homimpliesregulargraphs} implies that an $\e$-homogeneous partition of a graph is automatically $\e^{1/3}$-regular.   An immediate corollary of this is that $M_{\calH}^{\hom}$ roughly upper bounds $M_{\calH}$ (when $M_{\calH}^{\hom}$ takes finite values). 

\begin{fact}\label{fact:homub}
Suppose $\calH$ is a hereditary graph property such that $M_{\calH}^{\hom}(\e)<\infty$ for all $\e\in (0,1)$.  Then for all $\e\in (0,1)$, $M_{\calH}(\e)\leq M_{\calH}^{\hom}(\e^3)$.
\end{fact}

It is because of Fact \ref{fact:homub} that the function $M_{\calH}^{\hom}$ plays an important role in the proof of Theorem \ref{thm:alljumphom}.  We will use in Section \ref{sec:graphs} the analogue of Proposition \ref{prop:closegraphs} for $M_{\calH}^{\hom}$. 

\begin{proposition}\label{prop:closehomgraphs}
Suppose $\calH$ and $\calH'$ are hereditary graph properties, and assume that for all $\e\in (0,1)$, $M_{\calH'}^{\hom}(\e)<\infty$.   If $\calH$ is close to $\calH'$, then $M^{\hom}_{\calH}(2\e)\leq M^{\hom}_{\calH'}(\e)$ holds for all sufficiently small $\e>0$.
\end{proposition}
\begin{proof}
Let $\e>0$ be sufficiently small.   Let $\delta>0$ be sufficiently small compared to $\e$, and let $N$ be an integer which is sufficiently large in terms of $\e$ and $\delta$.  Fix $G=(V,E)\in \calH$ with $|V|\geq N$. Since $N$ is sufficiently large, there exists some $G'\in \calH'$ which is $\delta$-close to $G$.  An almost identical argument to the proof of Proposition \ref{prop:closegraphs} shows any $\e$-homogeneous partition of $G'$ is also $2\e$-homogeneous with respect to $G$.  The argument is sufficiently similar that we omit it here.  This implies $M_{\calH}^{\hom}(2\e)\leq M^{\hom}_{\calH'}(\e)$.
\end{proof}

We conclude this section by stating $3$-graph analogues of the results above. First, homogeneous triples in $3$-graphs are regular.   

\begin{proposition}\label{prop:2.21}
Let $\e\in (0,1)$ and assume $H$ is a $3$-graph.  Assume $X,Y,Z\subseteq V(H)$ are nonempty, and  $d_H(X,Y,Z)\in [0,\e)\cup (1-\e,1]$. Then $(X,Y,Z)$ is $\e^{1/4}$-regular for $H$.
\end{proposition}

We omit the proof of Proposition \ref{prop:2.21} as it is almost identical to the proof of Proposition \ref{prop:homimpliesregulargraphs}.   An immediate corollary of Proposition \ref{prop:2.21} is the following analogue of Fact \ref{fact:homub}.

\begin{fact}\label{lem:homup3graphs}
Suppose $\calH$ is a hereditary $3$-graph property and $M_{\calH}^{\hom}(\e)<\infty$ for all $\e\in (0,1)$.   Then $M_{\calH}(\e)\leq M_{\calH}^{\hom}(\e^4)$.
\end{fact}

We will use the following analogue of Proposition \ref{prop:closehomgraphs} in several sections on $3$-graphs. 

\begin{proposition}\label{prop:closehom}
Suppose $\calH$ and $\calH'$ are hereditary $3$-graph properties and $M_{\calH'}^{\hom}(\e)<\infty$ for all $\e\in (0,1)$.   If $\calH$ is close to $\calH'$, then $M^{\hom}_{\calH}(2\e)\leq M^{\hom}_{\calH'}(\e)$.
\end{proposition}

We leave the proof of Proposition \ref{prop:closehom} as an exercise to the reader, as it is similar to the proof of Proposition \ref{prop:closehomgraphs}.

\subsection{VC-dimension}\label{ss:vcbasic}
The notion of VC-dimension will appear several times in the paper.  We define in this subsection what is needed for our applications, and refer the reader to the literature for more extensive background on the topic (see e.g. \cite{Matousek}).

\begin{definition} A \emph{set system} is a pair $(X,\calF)$ where $X$ is a finite set, called the \emph{ground set}, and $\calF\subseteq \calP(X)$ is a nonempty collection of subsets of $X$.   
\end{definition}

We note that the definition of a set system usually allows the ground set to be infinite, but we will not need that level of generality in this paper.  

\begin{definition}
Given a set system $(X,\calF)$ and $Y\subseteq X$, we say $Y$ is \emph{shattered by $\calF$} if 
$$
|\{F\cap Y: F\in \calF\}|=2^{|Y|}.
$$
\end{definition}

 We are now ready to define the VC-dimension of a set system.

\begin{definition}\label{def:vcsetsystem}
Given a set system $(X,\calF)$, the \emph{VC-dimension of $(X,\calF)$} is defined to be
\begin{align*}
\VC(X,\calF)=\max\{|Y|: Y\subseteq X \text{ and $Y$ is shattered by $\calF$}\}.
\end{align*}
 \end{definition}
 
Observe that the VC-dimension of a set system $(X,\calF)$ is always an integer between $0$ and $\lfloor \log_2|\calF|\rfloor$.

\section{The graphs case}\label{sec:graphs}
This section contains the proof of Theorem \ref{thm:alljumphom}, which considers the growth of regular partitions in hereditary graph properties. As this section is somewhat long,  we provide here an outline. First, we introduce VC-dimension for graphs and its connection to homogeneous partitions in Subsection \ref{ss:vc}.  In Subsection \ref{ss:polytowergraphs}, we give an exposition of the previously known jump between polynomial and tower growth rates.  In Subsection \ref{ss:lbgraphs} we prove a general lower bound on regular partitions of certain graph blowups. Finally, in Subsection \ref{ss:contpolygraphs}, we prove the existence of the jump between constant and polynomial growth rates, after which we deduce Theorem  \ref{thm:alljumphom}.

\subsection{VC-dimension in graphs}\label{ss:vc}
We define the VC-dimension of a graph as follows.
 
\begin{definition}\label{def:vc}
Suppose $G=(V,E)$ is a finite graph.  The \emph{VC-dimension of $G$}, denoted $\VC(G)$, is the VC-dimension of the set system $(V,\calF)$, where 
$$
\calF=\{N_G(v): v\in V\}.
$$ 
\end{definition}

We extend the notion of VC-dimension to hereditary graph properties below.  

\begin{definition}\label{def:vchp}
Suppose $\calH$ is a hereditary graph property. The \emph{VC-dimension of $\calH$} is defined to be
$$
\VC(\calH)=\sup\{\VC(G): G\in \calH\}  \in \mathbb{N}\cup \{\infty\}.
$$
When $\VC(\calH)<\infty$, we say $\calH$ \emph{has finite VC-dimension}, and when $\VC(\calH)=\infty$, we say $\calH$ \emph{has infinite VC-dimension}.
\end{definition}

Our next task is to state a needed combinatorial characterization  of when $\VC(\calH)=\infty$. For this, we define the following class of ``powerset" graphs.   

\begin{definition}[Powerset Graphs]\label{def:pset}
Given $k\geq 1$, define $\PS(k)$ to be the set of all graphs $G=(V,E)$ where $V$ admits an enumeration of the form $V=\{a_i: i\in [k]\}\cup \{b_S:S\subseteq [k]\}$ satisfying
\begin{align}\label{al:psdisjoint}
\{a_i: i\in [k]\}\cap \{b_S: S\subseteq [k]\}=\emptyset,
\end{align}
and such that $\{a_ib_S:i\in S\}\subseteq E$ and $\{a_ib_S: i\notin S\}\cap E=\emptyset$.
\end{definition}

We note the disjointness condition (\ref{al:psdisjoint}) in Definition \ref{def:pset} will be useful in later applications. We can now state the necessary characterization of when a hereditary graph property has infinite VC-dimension. 

 \begin{fact}\label{fact:bipvc}
Suppose $\calH$ is a hereditary graph property. The following are equivalent.
\begin{enumerate}
\item $\VC(\calH)=\infty$.
\item One of the following holds.
\begin{enumerate}
\item $\calH$  contains every bipartite graph.
\item $\calH$ contains every co-bipartite graph.
\item $\calH$ contains every split graph.
\end{enumerate}
\item $\calH\cap \PS(k)\neq \emptyset$ for all $k\geq 1$.
\item ${\bf B}_{\calH}\cap \PS(k)\neq \emptyset$ for all $k\geq 1$. 
\end{enumerate}
\end{fact} 

The equivalence of (1) and (2) is a well-known folklore result which we will not prove here.  Given the equivalence of (1) and (2), proving Fact \ref{fact:bipvc} becomes straightforward. Indeed, it is an exercise to show (2) implies each of (3) and (4), while it is almost immediate from the relevant definitions that  (3) and (4) each imply (1).  We note that Fact \ref{fact:bipvc} implies $\calH$ has finite VC-dimension if and only if ${\bf B}_{\calH}$ has finite VC-dimension.  

It is not difficult to see there is a connection between VC-dimension and homogeneity.  For example, a standard application of the counting lemma shows regular pairs in graphs of bounded VC-dimension are homogeneous.  

\begin{fact}\label{lem:graphhom}
For all $\e\in (0,1)$ and integers $k\geq 0$, there are $\delta\in (0,1)$ and $n_0\geq 1$ so that the following holds.  Suppose $G=(V,E)$ is a graph with VC-dimension at most $k$, and $U,W\subseteq V$ are such that $\min\{|U|,|W|\}\geq n_0$, and such that $(U,W)$ is a $\delta$-regular pair with respect to $G$.  Then 
$$
d_G(U,W)\in [0,\e)\cup (1-\e,1].
$$
\end{fact}

Fact \ref{lem:graphhom} implies that if $G$ is a large graph with  $\VC(G)\leq k$, then any sufficiently regular partition of $G$ will be automatically $\e$-homogeneous.  This immediately tells us that when $\calH$ has finite VC-dimension, $M_{\calH}^{\hom}(\e)<\infty$.    It is not hard to show the converse is also true (e.g. using Fact \ref{fact:bipvc}(2)). This yields the following.

\begin{fact}
Given a hereditary graph property $\calH$, $\VC(\calH)<\infty$ if and only if $M_{\calH}^{\hom}(\e)<\infty$ for all $\e\in (0,1)$.
\end{fact}

While Fact \ref{lem:graphhom} tells us graphs of bounded VC-dimension admit homogeneous partitions, the theorems of Alon--Fischer--Newman \cite{Alon.2007} and Lov\'{a}sz--Szegedy \cite{Lovasz.2010} mentioned in the introduction show something much stronger, namely that such graphs admit homogeneous partitions \emph{with efficient bounds}.  We give a more detailed statement of this result here, with improved  bounds from \cite{Fox.2017bfo}.

\begin{theorem}\label{thm:foxpachsukagain}
For all $\e\in (0,1)$ and all integers $k\geq 0$, there is $c=c(k)$ so that the following holds.  Suppose $G=(V,E)$ is a sufficiently large graph with VC-dimension at most $k$.  Then $G$ has an $\e$-homogeneous equipartition of size $K$, for some $8\e^{-1}\leq K\leq c\e^{-2k-1}$.
\end{theorem}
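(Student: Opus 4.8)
The natural approach is that of Fox, Pach, and Suk \cite{Fox.2017bfo}, whose one external input is Haussler's packing lemma: a set system on $n$ points of VC-dimension at most $k$ contains no family that is $\delta$-separated in normalized Hamming distance of size exceeding $(c_0/\delta)^k$, for an absolute constant $c_0$. The idea is to combine this with a double-counting argument playing the role of Lemma~\ref{lem:graphhom}.

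The first step is to produce a partition of $V$, $|V|=n$, into roughly $\e^{-2k}$ classes each of small ``neighborhood diameter.'' Associate to $H=(V,E)$ the set system $\calF=\{N_G(v):v\in V\}$, of VC-dimension at most $k$, and on $V$ consider the pseudometric $\rho(u,v)=|N_G(u)\,\Delta\,N_G(v)|/n$. Fixing $\delta$ to be a suitable power of $\e$, choose a maximal $\delta$-separated set of vertices $u_1,\dots,u_t$; Haussler's lemma gives $t\le (c_0/\delta)^k$, and assigning each vertex to a nearest $u_i$ yields a partition $V=W_1\cup\cdots\cup W_t$ with $\rho(v,u_i)\le 2\delta$ for all $v\in W_i$. (Equivalently one may partition $V$ by the traces $N_G(v)\cap S$ of a random sample $S$ of size polynomial in $\e^{-1}$; since with high probability $S$ meets every symmetric difference $N_G(v)\,\Delta\,N_G(v')$ of density at least $\e$, and the number of distinct traces is polynomially bounded by Sauer--Shelah, this again gives polynomially many classes of $\rho$-diameter below $\e$.)

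The second step is to upgrade this to an $\e$-homogeneous equipartition. The key inequality is that for a class $W_j$ of $\rho$-diameter at most $2\delta$ one has $\sum_{v\in V}d_G(\{v\},W_j)\big(1-d_G(\{v\},W_j)\big)=O(\delta n)$ --- a direct double count, since $\sum_{w,w'\in W_j}|N_G(w)\setminus N_G(w')|=O(\delta n|W_j|^2)$. Hence for each $j$ all but $O(\sqrt{\delta}\,n)$ vertices $v$ have $d_G(\{v\},W_j)$ near $0$ or $1$; discarding those, together with the at most $t$ classes that are too small, removes $O(\e)n$ vertices, and on the remainder $v\mapsto |N_G(v)\cap W_j|$ is nearly constant on each surviving $W_i$, so $d_G(W_i,W_j)$ equals a single-vertex density up to $O(\e)$ and thus lies in $[0,\e)\cup(1-\e,1]$. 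Finally, merging the discarded vertices into one auxiliary class and cutting the surviving classes into chunks of size about $n/N$ yields an equipartition into $N$ classes, with $8\e^{-1}\le N\le c(k)\e^{-2k-1}$ (the lower bound arranged by taking $N$ large enough in the allowed range); its chunks still have $\rho$-diameter at most $2\delta$, so the representative argument still applies and the partition is $O(\e)$-homogeneous. Rescaling $\e$ by an absolute constant gives the statement, and throughout one may assume $n$ large so that classes of size $O(t)$ are negligible.

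The step I expect to be the main obstacle is making this quantitatively self-consistent. A nearest-representative partition may contain many small classes whose total mass is not negligible, and both small classes and a small class $W_j$ appearing in the ``nearly constant density'' estimate break the naive bounds; one must take the diameter scale $\delta$ small enough to defeat this while simultaneously keeping $t$ polynomial in $\e^{-1}$ and the final equipartition of size only $O(\e^{-2k-1})$. Balancing these competing demands --- which is exactly where the sharp form of the packing lemma and the fact that $V$ plays both the ``row'' and ``column'' role are used --- together with the bookkeeping needed to pass from the (highly unbalanced) class partition to an honest equipartition without losing the homogeneity estimate, is the technical heart of the argument, and for full details I would cite \cite{Fox.2017bfo}.
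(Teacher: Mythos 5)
The paper does not prove this theorem; it is quoted from Fox, Pach, and Suk \cite{Fox.2017bfo}, so your sketch must stand on its own. The two ingredients you name are both correct: Haussler's packing lemma on the pseudometric $\rho(u,v)=|N_G(u)\,\Delta\,N_G(v)|/n$ (with $n=|V|$) gives a partition into $t\le(c_0/\delta)^k$ cells of $\rho$-diameter at most $2\delta$, and the double count $\sum_{v\in V}d_G(\{v\},W_j)\bigl(1-d_G(\{v\},W_j)\bigr)\le 2\delta n$ holds exactly as you say. The gap is in passing from these single-vertex densities to the pair densities $d_G(W_i,W_j)$. You propose to discard, for each $j$, the $O(\sqrt{\delta}\,n)$ vertices $v$ whose density to $W_j$ is not near $0$ or $1$, to discard the small classes, and then to use the near-constancy estimate $|d_G(\{v\},W_j)-d_G(\{v'\},W_j)|\le 2\delta n/|W_j|$ on what remains. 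But if you keep only classes of size at least $\tau n$ you need $t\tau=O(\e)$ (to make the discarded small-class mass negligible) and simultaneously $\delta/\tau=O(\e)$ (for near-constancy on the surviving classes), hence $t\delta=O(\e^2)$; with $t=\Theta(\delta^{-k})$ this forces $\delta^{1-k}=O(\e^2)$, which has no solution with $\delta\le 1$ once $k\ge 2$. So the parameter balancing you flag as the main obstacle is not bookkeeping to be absorbed --- the argument as you have written it does not close.

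The missing idea is a symmetrization step that dispenses with discarding entirely. Writing $d_{ij}=d_G(W_i,W_j)$, one has $d_{ij}(1-d_{ij})=\Pr[vw\in E,\ v'w'\notin E]$ for $v,v'\in W_i$ and $w,w'\in W_j$ chosen independently and uniformly, and the inclusion of events $\{vw\in E,\ v'w'\notin E\}\subseteq\{vw\in E,\ v'w\notin E\}\cup\{v'w\in E,\ v'w'\notin E\}$ bounds this by two single-vertex variances, one averaged over $W_j$ and one over $W_i$. Summing with weights $|W_i||W_j|$ and applying your double count gives
\[
\sum_{i,j}|W_i||W_j|\,d_{ij}(1-d_{ij})\ \le\ 4\delta n^2,
\]
so by Markov's inequality the total weight of pairs with $d_{ij}\in[\e,1-\e]$ (these have $d_{ij}(1-d_{ij})\ge\e/2$) is at most $8\delta n^2/\e\le\e n^2$ once $\delta\le\e^2/8$. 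This yields $t\le(8c_0\e^{-2})^k$ cells and, since any refinement of the cells inherits the small $\rho$-diameter, the same estimate holds after splitting the cells into equal-size chunks and lumping leftovers, producing an $O(\e)$-homogeneous equipartition of size $\Theta_k(\e^{-2k-1})$; the lower bound $K\ge 8\e^{-1}$ is arranged by taking the chunk size small enough. This symmetrization is the crux of the argument in \cite{Fox.2017bfo} and its precursors, and it --- rather than a more careful choice of $\delta$ and $\tau$ --- is what your sketch is missing.
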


Thus, by Theorem \ref{thm:foxpachsukagain}, we know that when $\calH$ has finite VC-dimension, not only is $M_{\calH}^{\hom}(\e)$ finite, it is in fact bounded above by a polynomial in $\e^{-1}$.  This plays a crucial role in the polynomial to tower jump in the next subsection.
 
\subsection{Polynomial to tower jump}\label{ss:polytowergraphs}

In this section, we provide an exposition of the jump between polynomial and tower growth rates.  This dichotomy was first observed by Fox, and is stated explicitly in a paper by Alon--Fox--Zhao \cite{Alon.2018is}.  As the reader will see, it is an immediate corollary of Theorem \ref{thm:foxpachsukagain} and a construction of Fox--Lov\'{a}sz \cite{Fox.2014}.  We include the details in part to guide analogous arguments later in the paper.  We begin by stating the relevant theorem from \cite{Fox.2014}.

\begin{theorem}[Fox--Lov\'{a}sz \cite{Fox.2014}]\label{thm:bipgowers}
There is a constant $c>0$ such that for all sufficiently small $\e>0$, and all $n_0\in \mathbb{N}$, there exists a bipartite graph $G=(U\cup W, E)$ such that $|U|=|W|\geq n_0$, and such that any $\e$-regular partition of $G$  has at least $\Tw(c\e^{-2})$ many parts. 
\end{theorem}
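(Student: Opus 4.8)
The plan is to adapt the construction of Fox and Lov\'{a}sz from \cite{Fox.2014} underlying the bound $M(\e)\geq Tw(\Omega(\e^{-2}))$ for general graphs, rearranging it so that it directly produces \emph{bipartite} graphs. Their construction is multiscale: one fixes $k=\Theta(\e^{-2})$ and builds a nested sequence of equipartitions $\calP_0\succeq\calP_1\succeq\cdots\succeq\calP_k$ of a vertex set, where $\calP_0$ is trivial and each $\calP_i$ splits every part of $\calP_{i-1}$ into enough pieces that $|\calP_i|$ exceeds a tower of height $i$; one then defines a weighted graph by assigning to a pair $x\neq y$ the weight $\tfrac12+\delta\,\chi_{i(x,y)}(x,y)$, where $i(x,y)$ is the least index separating $x$ and $y$, $\delta=\Theta(\e)$ is the deviation (calibrated as in \cite{Fox.2014} so that $k$ can be as large as $\Theta(\e^{-2})$ while the accumulated mean-square density stays $O(1)$), and $\chi_i$ is a fixed sign pattern ``generic'' enough not to be averaged away by a coarse partition. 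To bipartize, I would take the vertex set to be $L\cup R$ with $L$ and $R$ two copies of the same nested-partition space, let each $\calP_i$ restrict to both sides compatibly, assign weight $0$ inside $L$ and inside $R$, and apply the rule above only to pairs $xy$ with $x\in L$, $y\in R$; call the resulting weighted bipartite graph $W$.

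The first real step is the Fox--Lov\'{a}sz irregularity lemma in this bipartite setting: if $\calQ$ is an $\e$-regular partition of $W$, then $\calQ$ refines $\calP_k$ outside a set of at most $o(|V|^2)$ pairs. The proof is theirs essentially verbatim --- one argues by induction on $i$ that if $\calQ$ fails to refine $\calP_i$ on a non-negligible set of pairs, then some pair of parts of $\calQ$ contains large subsets $X,Y$ lying in distinct parts of $\calP_{i-1}$ across which the $W$-density deviates by more than $\e$ from its average, contradicting $\e$-regularity. The only new point is that these witnessing subsets can be taken with $X\subseteq L$ and $Y\subseteq R$, which is exactly the configuration relevant to the bipartite notion of $\e$-regular pair, and is automatic since every edge of $W$ already runs between $L$ and $R$. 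Since $|\calP_k|\geq Tw(k)$ with $k=\Theta(\e^{-2})$, this shows every $\e$-regular partition of $W$ has at least $Tw(c_0\e^{-2})$ parts for some constant $c_0>0$; I would in fact run this with a slightly larger deviation so that even $2\e$-regular partitions of $W$ have $\geq Tw(c_1\e^{-2})$ parts, leaving room for the discretization.

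Next I would pass from the weighted graph $W$ to genuine bipartite graphs by random sampling. Let $G'=(L\cup R,E)$ be the random bipartite graph in which each $(x,y)\in L\times R$ is an edge independently with probability $W(xy)$. For any fixed partition $\calQ$ of $V=L\cup R$ into at most $T:=Tw(c_1\e^{-2})$ parts, and any fixed pair of subsets of size $\geq \e^2|V|^2/T^2$, a Chernoff bound makes the probability that the edge density in $G'$ differs from that in $W$ by more than $\e/4$ at most $\exp(-\Omega(\e^6|V|^2/T^2))$; a union bound over the at most $2^{O(|V|\log T)}$ such partitions and the at most $4^{|V|}$ pairs of not-too-small subsets is $o(1)$ once $|V|$ is large relative to $T$. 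Hence for every large enough $|V|$ there is a choice of $G'$ on which all relevant densities agree with those of $W$ to within $\e/4$; then any $\e$-regular partition of $G'$ is a $2\e$-regular partition of $W$ with the same parts, so it has at least $Tw(c_1\e^{-2})\geq Tw(c\e^{-2})$ parts after renaming the constant. As $|V|$ may be taken arbitrarily large, this yields the required arbitrarily large bipartite graphs.

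The main obstacle is the bipartite incarnation of the Fox--Lov\'{a}sz irregularity lemma in the second step --- checking that stacking $\Theta(\e^{-2})$ scales really forces a regular partition down to the finest scale --- but this is their argument, the only genuinely new ingredient being the trivial observation that the witnessing subsets live on opposite sides. I would also remark on why one cannot shortcut this by applying the operation $\bip(\cdot)$ of (\ref{al:bip}) to a general regularity-hard graph $G_0$: an $\e$-regular partition of $\bip(G_0)$ only yields a partition of $V(G_0)$ after taking the common refinement of its two side-partitions, whose parts can be a $\Theta(|\calP|^{-1})$ fraction of the original parts, so Proposition \ref{lem:slicing} would degrade the regularity parameter by a tower-sized factor; building the bipartite construction from scratch avoids this loss.
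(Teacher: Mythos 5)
The paper itself offers no proof of Theorem \ref{thm:bipgowers}: the statement is a citation to Fox and Lov\'{a}sz, together with the remark immediately following it that the graph $G$ constructed in Section~3 of \cite{Fox.2014} is in fact bipartite even though this is not stated explicitly there. Your proposal instead reconstructs the Fox--Lov\'{a}sz argument from the ground up, imposing bipartiteness structurally on $L\cup R$ from the start and then rerunning the multiscale irregularity lemma and the sampling discretization. This is a genuinely different (and more self-contained) route: the paper's route gets the statement for free by a careful reading of the cited proof, while yours redoes the hard work but requires no such reading. Your closing observation --- that one cannot shortcut by applying $\bip(\cdot)$ to a known regularity-hard graph $G_0$, because transferring a regular partition of $\bip(G_0)$ back to $G_0$ requires a common refinement of the two side-partitions, which shrinks parts by a factor comparable to the partition size and hence, via Proposition \ref{lem:slicing}, destroys the regularity parameter --- is correct, and is precisely why bipartiteness must be carried through the multiscale argument rather than bolted on afterwards.

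The one place your sketch is thinner than it should be is the bipartite irregularity lemma. You assert that the witnessing subsets ``can be taken with $X\subseteq L$ and $Y\subseteq R$'' automatically because all edges cross, but a part $Q$ of a candidate regular partition $\calQ$ of $L\cup R$ can straddle both sides, and the Fox--Lov\'{a}sz mean-square argument locates irregularity between pairs of $\calQ$-parts, not between pairs of sides. If $Q_1$ fails to refine $\calP_i$ on its $L$-portion but $Q_1\cap L$ is a small fraction of $Q_1$, the offending sub-pair $(X,Y)$ with $X\subseteq Q_1\cap L$ may not meet the $\e|Q_1|$ size threshold needed to contradict $\e$-regularity of $(Q_1,Q_2)$. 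One has to argue that non-refinement accumulated over the whole partition forces some $\calQ$-pair to exhibit a cross-bipartite deviation on subsets of adequate \emph{relative} size; this is true, and is implicit in Fox--Lov\'{a}sz once one notes that all nonzero weight lies across $L\times R$, but it is not ``automatic'' and should be spelled out. With that filled in, your reconstruction is sound and compatible with the paper's interpretation that the original Fox--Lov\'{a}sz graph is already of this bipartite form.
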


We remark that in \cite{Fox.2014}, the authors do not state explicitly that the $G$ they construct for Theorem \ref{thm:bipgowers}  is bipartite, or that the parts $U$ and $W$ are of equal size. However, it is made clear that this is the case during the proof (see Section 3 of \cite{Fox.2014}). Moreover, it is not difficult to see the analysis in \cite{Fox.2014} relies only on the behavior of the edges between $U$ and $W$, and would work equally well if $U$ and/or $W$ were chosen to be cliques rather than anticliques.   This yields the following corollary.

\begin{corollary}\label{cor:bipgowers}
There is a constant $c>0$ such that for all sufficiently small $\e>0$, and all $n_0\in \mathbb{N}$, there exist sets $U, W$ satisfying $|U|=|W|\geq n_0$, and a bipartite graph $G_1=(U\cup W, E_1)$, a co-bipartite graph $G_2=(U\cup W, E_2)$, and a split graph $G_3=(U\cup W, E_3)$, such that any $\e$-regular partition of $G_1$, $G_2$, or $G_3$ has at least $\Tw(c\e^{-2})$ many parts.  
\end{corollary}

We now deduce the existence of a jump between polynomial and tower growth.

\begin{corollary}\label{cor:fastgraphs}
For any hereditary graph property $\calH$, one of the following holds. 
\begin{enumerate}
\item $\VC(\calH)=\infty$.  In this case,  
$$
\Tw(\Omega(\e^{-2}))\leq M_{\calH}(\e)\leq \Tw(O(\e^{-4})).
$$
\item There is $k\in \mathbb{N}$ so that $\VC(\calH)=k$. In this case, there is a constant $c=c(k)>0$ such that $M_{\calH}(\e)\leq M_{\calH}^{\hom}(\e^3)\leq c\e^{-6k-3}$.
\end{enumerate}
\end{corollary}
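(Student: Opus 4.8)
The plan is to split on whether $VC(\calH)$ is finite or infinite; this is trivially exhaustive, and the two branches correspond exactly to items~(2) and~(1). Neither branch requires a genuinely new idea: the corollary is an assembly of Theorem~\ref{thm:reg}, Theorem~\ref{thm:bipgowers}, Theorem~\ref{thm:foxpachsukagain}, and Facts~\ref{fact:bip} and~\ref{fact:homub}.

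\textbf{Finite VC-dimension.} Suppose $VC(\calH)=k<\infty$. Then Theorem~\ref{thm:foxpachsukagain} says every sufficiently large $G\in\calH$ has an $\e$-homogeneous equipartition with at most $c(k)\e^{-2k-1}$ parts, so $M_{\calH}^{hom}(\e)\le c(k)\e^{-2k-1}$. Feeding this into Fact~\ref{fact:homub} gives
\[
M_{\calH}(\e)\ \le\ M_{\calH}^{hom}(\e^3)\ \le\ c(k)\,\e^{-3(2k+1)}\ =\ O(\e^{-6k-3}),
\]
which is item~(2). This branch is pure bookkeeping on the cited bounds.

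\textbf{Infinite VC-dimension.} Suppose $VC(\calH)=\infty$. The upper bound $M_{\calH}(\e)\le Tw(2+\e^{-2}/16)$ holds for \emph{every} hereditary graph property by the Fox--Lov\'asz form of the regularity lemma (Theorem~\ref{thm:reg}), so only the tower lower bound needs work. Since $VC(\calH)=\infty$, Fact~\ref{fact:bip} (infinite VC-dimension forces $\bip(\calH)$ to contain every finite bipartite graph) lets us conclude that $\bip(\calH)$ contains the arbitrarily large bipartite graphs produced by Theorem~\ref{thm:bipgowers}, and each $\e$-regular partition of such a graph has at least $Tw(c\e^{-2})$ parts. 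Hence $M_{\bip(\calH)}(\e)\ge Tw(c\e^{-2})$ for all sufficiently small $\e$. To push this down to $\calH$ itself, apply the transfer inequality $M_{\bip(\calH)}(2\e')\le 2M_{\calH}((\e')^9)+1$ with $\e'=\e/2$: combining with the previous bound,
\[
Tw(c\e^{-2})\ \le\ M_{\bip(\calH)}(\e)\ \le\ 2\,M_{\calH}\bigl((\e/2)^9\bigr)+1 .
\]
Writing $\delta=(\e/2)^9$, so that $\e^{-2}=\tfrac14\delta^{-2/9}$, and using that $Tw$ grows fast enough that $\tfrac12(Tw(x)-1)\ge Tw(x-1)$ once $x$ is moderately large, one gets $M_{\calH}(\delta)\ge Tw\bigl(\tfrac{c}{4}\delta^{-2/9}-1\bigr)\ge Tw(\delta^{-C})$ for, say, $C=1/9$ and all sufficiently small $\delta$. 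This is the lower bound in item~(1).

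\textbf{Expected obstacle.} There is essentially no obstacle. The only points demanding a little care are (a) matching the ``sufficiently small $\e$'' and ``arbitrarily large $G$'' quantifiers across Theorems~\ref{thm:bipgowers},~\ref{thm:reg} and the transfer inequality when composing the lower bound, and (b) the elementary estimate showing that a bounded multiplicative loss ($M\mapsto 2M+1$) together with the fixed polynomial change of variable ($\e\mapsto(\e/2)^9$) only perturbs the \emph{exponent} in the tower height $Tw(\,\cdot\,)$ by a constant factor --- which holds because $Tw$ is so rapidly growing that such distortions are absorbed into $C$.
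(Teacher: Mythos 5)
Your proof is correct and follows essentially the same path as the paper's: split on $VC(\calH)$ finite or infinite, use Theorem~\ref{thm:foxpachsukagain} together with Fact~\ref{fact:homub} for the polynomial upper bound, and in the infinite case combine the Fox--Lov\'asz lower bound (Theorem~\ref{thm:bipgowers}), the observation that $\bip(\calH)$ is the full class of bipartite graphs, and the transfer inequality $M_{\bip(\calH)}(2\e)\le 2M_{\calH}(\e^9)+1$, absorbing the affine and power-substitution distortions into the tower exponent. The only cosmetic remark is that you cite Theorem~\ref{thm:foxpachsukagain} where the paper's proof cites \ref{thm:foxpachsuk}; your citation is in fact the right one for the graph case, so this is a small improvement in bookkeeping rather than a divergence.
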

\begin{proof}
Suppose first $\VC(\calH)=\infty$.  The upper bound $M_{\calH}(\e)\leq \Tw(O(\e^{-4}))$ holds by the discussion following Theorem \ref{thm:chung}.  For the lower bound,  let $\e\in (0,1)$ be sufficiently small, and fix any integer $n_0\geq 1$.  Let $G_1$, $G_2$, and $G_3$ be as in Corollary \ref{cor:bipgowers} for $\e$ and $n_0$.  By Fact \ref{fact:bipvc}(2), $\calH$   contains one of $G_1$, $G_2$, or $G_3$. Consequently, $M_{\calH}(\e)\geq \Tw(\Omega(\e^{-2}))$ by Corollary \ref{cor:bipgowers}.  This concludes our verification of (1).

Suppose now $\VC(\calH)=k<\infty$. By Theorem \ref{thm:foxpachsukagain}, there is $c=c(k)>0$ such that for all sufficiently small $\e>0$, $M_{\calH}^{\hom}(\e)\leq c\e^{-2k-1}$.  Combining with Fact \ref{fact:homub}, we have  
$$
M_{\calH}(\e)\leq M_{\calH}^{\hom}(\e^3)\leq c\e^{-6k-3}.
$$
  Thus (2) holds.\end{proof}

\subsection{Lower bound lemma for graphs}\label{ss:lbgraphs}

In this subsection, we prove a  lower bound lemma  for regular partitions of certain graph blowups. This result, Lemma \ref{lem:blowup0} below, will produce the lower bounds in both the constant and polynomial ranges of Theorem \ref{thm:alljumphom}. Lemma \ref{lem:blowup0} also serves as a warmup for a more complicated analogue in the 3-graph setting, Lemma \ref{lem:12blowup}.  

We begin by defining a canonical equivalence relation on the vertices of any graph.  Roughly speaking, two vertices are equivalent if they are connected to the same vertices (other than themselves).

\begin{definition}\label{def:sim}
Suppose $G=(V,E)$ is a graph.  Given $x,x'\in V$, define $x\sim_G x'$ if and only if 
$$
N_G(x)\cap (V\setminus \{x,x'\})=N_G(x')\cap (V\setminus \{x,x'\}).
$$  
\end{definition}

It is not difficult to show that for any graph $G$, $\sim_G$  is an equivalence relation on $V(G)$, and further, each $\sim_G$-equivalence class is either a clique or anticlique.  We will refer to these equivalence classes as the $\sim_G$-classes of $G$.

Informally, Lemma \ref{lem:blowup0} tells us that if ${\bf G}$ is an $n$-blowup of a graph $G$, then a sufficiently regular partition of ${\bf G}$ must approximately refine the partition of ${\bf G}$ generated by the blowups of the $\sim_G$-classes of $G$.  The reader may wish to review Definition \ref{def:blowupgraph} which covers blowups.

 \begin{lemma}[Lower bound lemma for graphs]\label{lem:blowup0}
Let $G=(U,E)$ be a graph, and let $U_1,\ldots, U_t$ be an enumeration of its $\sim_G$-classes.  Suppose $n\in\mathbb{N}^{\geq 1}$, $s,\e\in (0,1)$ satisfy
$$
0<\e\leq \min\{1/4,(2|U|)^{-1/s}\}.
$$  
Assume ${\bf G}$ is an $n$-blowup of $G$ with vertex set $V({\bf G})=\bigsqcup_{x\in U}V_x$ as in Definition \ref{def:blowupgraph}, and for each $1\leq i\leq t$, let ${\bf U}_i=\bigcup_{x\in U_i}V_x$.   Then for any $\e$-regular partition $\calP$ of ${\bf G}$,  there is a set $\calP'\subseteq \calP$ satisfying the following.
\begin{enumerate} 
\item $\left|\bigcup_{P\in \calP'}P\right|\geq (1-\e^{1-s})|V({\bf G})|$.
\item For all $P\in \calP'$, there is $1\leq i\leq t$ such that $|P\cap {\bf U}_i|\geq (1-\e^{1-s})|P|$.
\end{enumerate}
 \end{lemma}
 
 Before proving Lemma \ref{lem:blowup0}, we require a preliminary result (Lemma \ref{lem:int0} below) to help us identify the blowups of $\sim_G$-classes appearing in its conclusion. Recalling Notation \ref{not:01nbrs}, it is important to notice that, given a graph $G=(V,E)$ and a vertex $v\in V$, $v\notin N_{G^1}(v)$ and  $v\notin N_{G^0}(v)$. This yields the following \emph{partition} of $V$:
$$
V =N_{G^1}(v)\sqcup N_{G^0}(v)\sqcup \{v\}.
$$ 
 
 \begin{lemma}\label{lem:int0}
For any graph $G=(U,E)$ and any function $\alpha\colon U\rightarrow \{0,1\}$, the set $ \bigcap_{y\in U}(\{y\}\cup N_{G^{\alpha(y)}}(y))$  is contained in a single $\sim_G$-class.    
 \end{lemma}
 \begin{proof}  
Fix $\alpha\colon U\rightarrow \{0,1\}$ and let $W= \bigcap_{y\in U}(\{y\}\cup N_{G^{\alpha(y)}}(y))$.  Assume towards a contradiction $W$ is not contained in a single $\sim_G$-class. Then there exist $v,v'\in W$  such that $v\nsim_G v'$.  By definition of $\sim_G$, there are $z\in U\setminus \{v,v'\}$ and $\tau\in \{0,1\}$ such that $zv\in E^{\tau}$ and $zv'\in E^{1-\tau}$.  Since $v,v'\in W$, the definition of $W$ implies $\{v,v'\}\subseteq \{z\}\cup N_{G^{\alpha(z)}}(z)$. Since $z$ is distinct from both $v$ and $v'$, this implies $\{v,v'\}\subseteq   N_{G^{\alpha(z)}}(z)$, i.e. $zv$ and $zv'$ are both in $E^{\alpha(z)}$. However, this is impossible, since either $\alpha(z)=\tau$ and $zv'\notin E^{\alpha(z)}$, or $\alpha(z)=1-\tau$ and $zv\notin E^{\alpha(z)}$. Thus we have arrived at a contradiction.   
\end{proof}

 We now prove Lemma \ref{lem:blowup0}.
 
 \vspace{2mm}
 
 \noindent{\bf Proof of Lemma \ref{lem:blowup0}.}  By Definition \ref{def:blowupgraph}, ${\bf G}$ has vertex set of the form $V({\bf G})=\bigsqcup_{x\in U}V_x$, where each $V_x$ has size $n$, and edge set $E({\bf G})$ satisfying 
\begin{align}\label{al:blowup00}
 \bigcup_{xy\in E}K_2[V_x,V_y]\subseteq E({\bf G})\text{ and }  \left(\bigcup_{xy\in {U\choose 2}\setminus E}K_2[V_x,V_y] \right)\cap  E({\bf G})=\emptyset.
 \end{align}
 Note $|V({\bf G})|=|U|n$.  Suppose $\calP$ is an $\e$-regular partition of ${\bf G}$, and define
\begin{align*}
\Sigma_{\reg}&=\{(X,Y)\in \calP^2: (X,Y)\text{ is $\e$-regular with respect to ${\bf G}$}\}\text{ and }\\
\calE_{\reg}&=\bigcup_{(X,Y)\in \Sigma_{\reg}}X\times Y\subseteq V({\bf G})\times V({\bf G}).
\end{align*}
Since $\calP$ is $\e$-regular, we have $|\calE_{\reg}|\geq (1-\e)|V({\bf G})|^2$. Now set 
$$
\calE_{\reg}'=\{x\in V({\bf G}): |N_{\calE_{\reg}}(x)|\geq (1-\e^{s})|V({\bf G})|\}.
$$
Since $|\calE_{\reg}|\geq (1-\e)|V({\bf G})|^2$, Lemma \ref{lem:averaging} implies
 \begin{align}\label{al:blowup0}
 |\calE_{\reg}'|\geq (1-\e^{1-s})|V({\bf G})|.
 \end{align}
 It follows from its definition that $\calE_{\reg}'$ is a union of sets from $\calP$. Let $\calP'\subseteq \calP$ be such that $\calE_{\reg}'=\bigcup_{P\in \calP'}P$.   By construction, we have $|\bigcup_{P\in \calP'}P|=|\calE_{\reg}'|\geq (1-\e^{1-s})|V({\bf G})|$, so (i) holds for this choice of $\calP'$.  The rest of the proof is devoted to showing $\calP'$ satisfies (ii). To this end, for the rest of the proof, we fix $P\in \calP'$. Our goal is to show there is some $1\leq i\leq t$ such that $|P\cap {\bf U}_i|\geq (1-\e^{1-s})|P|$.   

Recall that every $y\in U$ gives rise to a partition $U =N_{G^0}(y)\sqcup N_{G^1}(y)\sqcup\{y\}$.  This induces a corresponding partition of $V({\bf G})$ given by
\begin{align}\label{al:blowup0part}
V({\bf G}) =\left(\bigcup_{x\in   N_{G^1}(y)}V_x\right)\bigsqcup\left(\bigcup_{x\in N_{G^0}(y)}V_x\right)\bigsqcup V_y.
\end{align}
 We next show that $P$ cannot substantially intersect two specific parts of such a partition. 

\begin{claim}\label{cl:qgood1}
For all $y\in U$, 
\begin{align}\label{al:yu}
\min\left\{\left|P\cap \left(\bigcup_{x\in   N_{G^1}(y)}V_x\right)\right|, \left|P\cap \left(\bigcup_{x\in N_{G^0}(y)}V_x\right)\right| \right\}<\e|P|.
\end{align}
\end{claim}
\begin{proof}
Fix $y\in U$, and suppose towards a contradiction that (\ref{al:yu}) is false for $y$. The definition of $\calE_{\reg}$ implies that for all $v,v'\in P$, $N_{\calE_{\reg}}(v)=N_{\calE_{\reg}}(v')$. Further, since $P\in \calP'$, $P\subseteq \calE_{\reg}'$. Consequently, for all $v\in P$, $|V({\bf G})\setminus N_{\calE_{\reg}}(v)|\leq \e^s|V({\bf G})|$.  Combining these observations yields 
\begin{align}
 \label{al:b11}\left|V_y\cap \left(\bigcup_{v\in P}N_{\calE_{\reg}}(v)\right)\right|\geq |V_y|-\left|V({\bf G})\setminus \left(\bigcup_{v\in P}N_{\calE_{\reg}}(v)\right)\right|& \geq |V_y|-\e^{s}|V({\bf G})|\\
&\nonumber= |V({\bf G})|(|U|^{-1} -\e^{s})\\
&\nonumber\geq \e^{s}|V({\bf G})|\\
&\nonumber\geq \e\left|\bigcup_{v\in P}N_{\calE_{\reg}}(v)\right|,
\end{align}
where the equality is because $|V({\bf G})|=|U|n=|U||V_y|$,  the third inequality is because $\e\leq (2|U|)^{-1/s}$, and the last inequality is because $0<s<1$. Since $\bigcup_{v\in P}N_{\calE_{\reg}}(v)$ is a union of sets from $\calP$, (\ref{al:b11}) implies there must be some $P' \in \calP$ such that $P'\subseteq   \bigcup_{v\in P}N_{\calE_{\reg}}(v) $, and such that 
\begin{align}\label{al:vy}
|P' \cap V_y|\geq \e|P'|. 
\end{align}
 By construction, $P\times P'\subseteq \calE_{\reg}$, i.e. $(P,P')\in \Sigma_{\reg}$.  Thus, by (\ref{al:vy}) and our assumption that (\ref{al:yu}) fails for $y$, we have 
 \begin{align*}
\max\Bigg\{&\Bigg|d_{{\bf G}}\Bigg(P\cap \Bigg(\bigcup_{  {x\in N_{G^1}(y) }}V_x \Bigg), P'\cap V_y\Bigg)-d_{{\bf G}}(P,P')\Bigg|, \\
&\nonumber \Bigg|d_{{\bf G}}\Bigg(P\cap \Bigg(\bigcup_{{x\in N_{G^0}(y) }}V_x \Bigg), P'\cap V_y\Bigg)-d_{{\bf G}}(P,P')\Bigg| \Bigg\}\leq \e.
\end{align*}
By the triangle inequality, this yields
\begin{align*} 
\Bigg|d_{{\bf G}}\Bigg(P\cap \Bigg(\bigcup_{  {x\in N_{G^1}(y) }}V_x \Bigg), P'\cap V_y\Bigg)-d_{{\bf G}}\Bigg(P\cap \Bigg(\bigcup_{{x\in N_{G^0}(y) }}V_x \Bigg), P'\cap V_y\Bigg)\Bigg|\leq 2\e<1,
\end{align*}
where the final inequality is because $\e\leq 1/4$ by assumption.  However, this is impossible since, by (\ref{al:blowup00}),  
$$
d_{{\bf G}}\left(P\cap \left(\bigcup_{x\in N_{G^1}(y)}V_x \right), P'\cap V_y\right)=1,
$$
while 
$$
d_{{\bf G}}\left(P\cap \left(\bigcup_{x\in N_{G^0}(y) }V_x \right), P'\cap V_y\right)=0.
$$
This finishes the proof of Claim \ref{cl:qgood1}.
\end{proof}

By Claim \ref{cl:qgood1}, we have that for all $y\in U$, there exists $\beta(y)\in \{0,1\}$ such that 
\begin{align}\label{al:blowup0alpha} 
\Bigg| P\cap \Bigg(\bigcup_{x\in  N_{G^{\beta(y)}}(y)}V_x \Bigg) \Bigg|<\e |P|. 
\end{align}
For each  $y\in U$, let $\alpha(y)$ be such that $\{\alpha(y),\beta(y)\}=\{0,1\}$. Then, recalling (\ref{al:blowup0part}), we have
\begin{align}\label{al:blowup0part2}
P\setminus \Bigg(\bigcup_{x\in  N_{G^{\beta(y)}}(y)}V_x \Bigg)= P\cap \Bigg(V_y\cup \bigcup_{x\in  N_{G^{\alpha(y)}}(y)}V_x \Bigg)=P\cap \Bigg(\bigcup_{x\in \{y\}\cup N_{G^{\alpha(y)}}(y)}V_x\Bigg).
\end{align}
We use the values $\alpha(y)$ for $y\in U$ to define the following subset of $V(G)$. 
$$
W=\bigcap_{y\in U}\Big(\{y\}\cup N_{G^{\alpha(y)}}(y)\Big).
$$
By Lemma \ref{lem:int0}, there exists $1\leq i\leq t$ such that $W\subseteq U_i$, and consequently, $\bigcup_{x\in W}V_x\subseteq {\bf U}_i$. Combining this with  (\ref{al:blowup0part2}) yields the following. 
\begin{align*} 
\left| P\cap {\bf U}_i  \right| \geq \left| P\cap \left(\bigcup_{x\in W}V_x \right)\right| &=\left| P\cap \left(\bigcap_{y\in U}\left( \bigcup_{x\in \{y\}\cup N_{G^{\alpha(y)}}(y)}V_x \right)\right)\right|\\
&=\left|P\setminus \left( \bigcup_{y\in U}\left( \bigcup_{x\in N_{G^{\beta(y)}}(y)}V_x\right)\right)\right|\\
&\geq |P|-\sum_{y\in U} \left|P\cap \left( \bigcup_{x\in N_{G^{\beta(y)}}(y)}V_x\right)\right|\\
&\geq |P|- \e |U||P|\\
&\geq (1-\e^{1-s})|P|,
\end{align*}
where the third inequality is by (\ref{al:blowup0alpha}), and the last inequality is because $\e<|U|^{-1/s}$.   This concludes the proof of (ii).  \qed
 
\vspace{2mm} 

We end this subsection by proving an easy corollary of Lemma \ref{lem:blowup0} which we will use in the case of a graph $G$ where all $\sim_G$-classes have size at most $2$. Such graphs will play an important role in the next subsection (see Definition \ref{def:prime}).  

 \begin{corollary}\label{cor:blowup0}
Suppose $G=(U,E)$ is a graph such that every $\sim_G$-class has size at most $2$. Assume $s\in (0,1)$, $0<\e\leq \min\{1/4,(2|U|)^{-1/s}\}$, and $n\geq 1$ is an integer.  If  ${\bf G}$ is an $n$-blowup of $G$, then any $\e$-regular partition $\calP$ of ${\bf G}$ satisfies  $|\calP|\geq (1-\e^{1-s})^2|U|/2$.  
 \end{corollary}
 \begin{proof}
Let $U_1,\ldots, U_t$ be an enumeration of the $\sim_G$-classes in $G$. Let $V({\bf G})=\bigsqcup_{x\in U}V_x$ as in Definition \ref{def:blowupgraph}, and for each $1\leq i\leq t$, set ${\bf U}_i=\bigcup_{x\in U_i}V_x$.  By assumption, each $U_i$ has size at most $2$, and, consequently, each ${\bf U}_i$ has size at most $2n$.  Let $\calP'\subseteq \calP$ be as in the conclusion of Lemma \ref{lem:blowup0}. Then we know $|\bigcup_{P\in \calP'}P|\geq (1-\e^{1-s})|V({\bf G})|$ and for all  $P\in \calP'$, there is $1\leq g(P)\leq t$ such that $|P\cap {\bf U}_{g(P)}|\geq (1-\e^{1-s})|P|$. Rearranging, this implies  that for all $P\in \calP'$,
\begin{align*}
|P|&\leq \frac{| P\cap {\bf U}_{g(P)}|}{(1-\e^{1-s})} \leq \frac{|{\bf U}_{g(P)}|}{(1-\e^{1-s})} \leq \frac{2 n}{(1-\e^{1-s})} , 
\end{align*}
where the last inequality uses that  $|{\bf U}_{g(P)}|= |U_{g(P)}|n\leq 2n$. Combining with the lower bound for $|\bigcup_{P\in \calP'}P|$, this yields
\begin{align*}
|\calP|\geq |\calP'|\geq \frac{|\bigcup_{P\in \calP'}P|}{\max_{P\in \calP'}|P|}\geq \frac{ (1-\e^{1-s})|V({\bf G})|}{\max_{P\in \calP'}|P|}&\geq \frac{(1-\e^{1-s})^2|V({\bf G})|}{2n}\\
&=\frac{(1-\e^{1-s})^2|U|}{2},
\end{align*}
where the equality uses that $|V({\bf G})|=|U|n$.
 \end{proof}

\subsection{Almost prime graphs}\label{ss:apgraphs}
In this section, we introduce almost prime graphs, which will play a crucial role in the slow growth rates of Theorem \ref{thm:alljumphom}.  

Distinct vertices $x$, $y$ in a graph $G$ satisfying $x\sim_Gy$ are called \emph{twins} (see Definition \ref{def:sim}).  Graphs containing no twins are called \emph{prime}. Prime graphs and related notions have been studied extensively in the literature.  In \cite{prime}, Chudnovsky, Kim, Oum, and Seymour showed that any sufficiently large prime graph contains one of a short list of special induced prime subgraphs (see also \cite{prime1}).  We will need an easier analogue of this result for a more general class of graphs, which we call \emph{almost prime}. 

\begin{definition}\label{def:irr}
A graph $G$ is \emph{almost prime} if every $\sim_G$-class has size at most $2$. 
\end{definition}

We note that a graph $G$ is prime if all its $\sim_G$ classes have size $1$, and thus, any prime graph is automatically almost prime.  On the other hand, there exist almost prime graphs that are not prime (e.g. a perfect matching).  

We next set notation for a special collection of graphs which will play an important role in our results. 
 
\begin{definition}\label{def:prime}
Given $k\geq 1$, let $\Irr(k)$ be the class of all graphs $G=(V,E)$ where $V$ has size $2k$ and $E$ satisfies one of (a)-(c)  for some enumeration $V=\{x_1,\ldots, x_k,y_1,\ldots, y_k\}$.
\begin{enumerate}[(a)]
\item For all $1\leq i,j\leq k$,  $x_iy_j\in E$ if and only if $i\leq j$,
\item For all $1\leq i,j\leq k$,  $x_iy_j\in E$ if and only if $i= j$,
\item For all $1\leq i,j\leq k$,  $x_iy_j\in E$ if and only if $i\neq j$.
\end{enumerate}
\end{definition}

 Note that in Definition \ref{def:prime}, we have assumed $V$ has size $2k$, which means the elements enumerated in the vertex set $\{x_1,\ldots, x_k,y_1,\ldots, y_k\}$ are pairwise distinct.  The notation ``AP" in Definition \ref{def:prime} is meant to indicate a relationship to almost prime graphs.  Indeed, it is not difficult to show all graphs in $\Irr(k)$ are almost prime. 

\begin{fact}\label{fact:irrcorrect}
For all $k\geq 1$, every $G\in \Irr(k)$ is almost prime.
\end{fact}
\begin{proof}
Let $G=(V,E)\in \Irr(k)$. Then, by definition, $|V|=2k$, and there is an enumeration $V=\{x_1,\ldots, x_k,y_1,\ldots, y_k\}$ such that one of the following holds.
\begin{enumerate}[(a)]
\item For all $1\leq i,j\leq k$,  $x_iy_j\in E$ if and only if $i\leq j$,
\item For all $1\leq i,j\leq k$,  $x_iy_j\in E$ if and only if $i= j$,
\item For all $1\leq i,j\leq k$,  $x_iy_j\in E$ if and only if $i\neq j$.
\end{enumerate}
Let $Z\subseteq V$ have size at least $3$.  By the Pigeonhole Principle, there exists $1\leq i< j\leq k$ such that either $\{x_i,x_j\}\subseteq Z$, or $\{y_i,y_j\}\subseteq Z$.  Let us assume $\{x_i,x_j\}\subseteq Z$ (the other case is similar).  Clearly $y_i,y_j\in V\setminus \{x_i,x_j\}$. Since one of (a)-(c) hold, either $x_iy_i\in E$ and $x_jy_i\notin E$ (cases (a) and (b)),  or $x_jy_i\in E$ and $x_iy_i\notin E$ (case (c)).  In all cases, we have shown that  $x_i\nsim_Gx_j$, and thus $Z$ cannot be contained in a single $\sim_G$-class.  This demonstrates that every $\sim_G$-class has size less than $3$, and thus $G$ is almost prime.
\end{proof}

On the other hand, we will show that any sufficiently large almost prime graph contains an induced copy of some element  from $\Irr(k)$. 

\begin{theorem}\label{thm:prime}
For all integers $k\geq 1$, there is an integer $N\geq 1$ such that the following holds.  Suppose $G$ is an almost prime graph on at least $N$ vertices. Then $G$ contains an element of $\Irr(k)$ as an induced subgraph.
\end{theorem} 

Theorem \ref{thm:prime} is proved via Ramsey theoretic arguments standard in model theory, and similar to those appearing in \cite{prime}. We include the proof in the appendix (see Subsection \ref{ss:ramsey1}).   

\subsection{Constant to polynomial jump}\label{ss:contpolygraphs}

This section proves the existence of the jump from constant to polynomial growth rates for graphs.  This jump is characterized via the \emph{almost prime} graphs appearing in ${\bf B}_{\calH}$  (see  Definitions \ref{def:bhgraphs} and \ref{def:irr}).   In particular, we will show that when ${\bf B}_{\calH}$ contains finitely many almost prime graphs up to isomorphism, $M_{\calH}$ is constant, and when ${\bf B}_{\calH}$ contains infinitely many almost prime graphs up to isomorphism, $M_{\calH}$ is bounded below by a polynomial.

We begin by using Theorem \ref{thm:prime} to show that ${\bf B}_{\calH}$ contains infinitely many non-isomorphic almost prime graphs if and only if  it contains graphs from $\Irr(m)$, for all $m\geq 1$ (see Definition \ref{def:prime} for the definition of $\Irr(m)$).
 
 \begin{corollary}\label{cor:bhchar}
Suppose $\calH$ is a hereditary graph property. The following are equivalent.
\begin{enumerate}
 \item ${\bf B}_{\calH}$ contains infinitely many non-isomorphic almost prime graphs.
 \item For all $m\geq 1$, ${\bf B}_{\calH}\cap \Irr(m)\neq \emptyset$.
 \end{enumerate}
 \end{corollary}
 \begin{proof}
 That (2) implies (1) follows from the fact that every $G\in \Irr(m)$ is almost prime by Fact \ref{fact:irrcorrect}, and has $2m$ vertices by the definition of $\Irr(m)$.   
 
  Assume now (1) holds. Fix $m\geq 1$, and let $N=N(m)$ be from Theorem \ref{thm:prime}.  By (1),  ${\bf B}_{\calH}$ contains an almost prime graph $G$ on at least $N$ vertices. By Theorem \ref{thm:prime}, $G$ contains an induced subgraph $G'$ such that $G'\in \Irr(m)$.   By Fact \ref{fact:bhhgraphs}, ${\bf B}_{\calH}$ is hereditary, so $G'\in {\bf B}_{\calH}$.   Thus (2) holds.
\end{proof}
 
 We now use our lower bound lemma, Lemma \ref{lem:blowup0}, to show certain blowups of graphs from $\Irr(m)$ have polynomial lower bounds on the sizes of their regular partitions.

\begin{lemma}\label{lem:lbblowup0}
Fix $0<t<\frac{1}{2}$ and suppose $\e\in (0,1)$ is sufficiently small compared to $2^{-1/t}$.  Set $m=\lfloor\e^{-1+t}/4\rfloor$, and let $n\geq 1$ be an integer.  Suppose $G\in \Irr(m)$ and ${\bf G}$ is an $n$-blowup of $G$.  Then any $\e$-regular partition of ${\bf G}$ has at least $\e^{-1+2t}$ parts.
\end{lemma}
\begin{proof}
Set $s=1-t$,  fix $G\in \Irr(m)$ and let ${\bf G}$ be an $n$-blowup of $G$. By definition of $\Irr(m)$, $|V(G)|=2m$.  Combining with the definition of $m$, we have
$$
|V(G)|=2m=2\left\lfloor\frac{\e^{-1+t}}{4}\right\rfloor\leq \frac{\e^{-1+t}}{2}=\frac{\e^{-s}}{2}.
$$
After rearranging, this implies $\e\leq (2|V(G)|)^{-1/s}$.  Since we assumed $\e$ is sufficiently small, we may also assume $\e\leq 1/4$.  Consequently, the hypotheses of Corollary \ref{cor:blowup0} are satisfied by $\e$, $s$, $G$, and ${\bf G}$. By Corollary \ref{cor:blowup0}, any $\e$-regular partition of ${\bf G}$ has size at least   
\begin{align*}
\frac{(1-\e^{1-s})^2|V(G)|}{2}=(1-\e^t)^2\lfloor\e^{-1+t}/4\rfloor \geq \e^{-1+2t},
\end{align*}
where the last inequality is because $\e$ is sufficiently small.  
\end{proof}

We now combine Corollary \ref{cor:bhchar} with Lemma \ref{lem:lbblowup0} to prove that if ${\bf B}_{\calH}$ contains infinitely many non-isomorphic almost prime graphs,  then $M_{\calH}$ is bounded below by a polynomial.  

\begin{corollary}\label{cor:bhinf}
Suppose $\calH$ is a hereditary graph property and ${\bf B}_{\calH}$ contains infinitely many non-isomorphic almost prime graphs. Then $M_{\calH}(\e)\geq \e^{-1+o(1)}$.
\end{corollary}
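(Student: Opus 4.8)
The plan is to exhibit, for every fixed exponent $s<1$, arbitrarily large graphs in $\calH$ all of whose $\e$-regular partitions have at least $\e^{-s}$ parts once $\e$ is small enough in terms of $s$; letting $s\to 1$ then yields $M_{\calH}(\e)\geq \e^{-1+o(1)}$. The two ingredients are Corollary \ref{cor:bhchar}, which says that because $\calB_{\calH}$ has infinitely many non-isomorphic members, for all $n,m\geq 1$ the property $\calH$ contains an $n$-blowup of some element of $Irr(m)$, and Lemma \ref{lem:lbblowup}, which says that for $s_1,s_2$ with $0<s_1<1-s_1<s_2<1$ and $\e$ sufficiently small, with $m=\e^{-1}/4$ and $n$ sufficiently large, any $\e$-regular partition of an $n$-blowup of an element of $Irr(m)$ has at least $\e^{-s_2}$ parts.

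First I would fix reals $s_1,s_2$ with $0<s_1<1-s_1<s_2<1$ and take $\e$ small, for now of the form $\e=1/(4m)$ with $m\in\mathbb{N}$, so that $m=\e^{-1}/4$ is an integer (arbitrary small $\e$ is handled in the next step). By Corollary \ref{cor:bhchar}, for every sufficiently large $n$ there is $H_n\in Irr(m)$ together with an $n$-blowup $G_n\in\calH$ of $H_n$; note that $|V(G_n)|=2mn\to\infty$ as $n\to\infty$. By Lemma \ref{lem:lbblowup} (applicable once $\e$ is below a threshold depending only on $s_1,s_2$), every $\e$-regular partition of $G_n$ has at least $\e^{-s_2}$ parts. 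Since the $G_n$ are arbitrarily large elements of $\calH$, the definition of $M_{\calH}$ forces $M_{\calH}(\e)\geq \e^{-s_2}$.

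Next I would remove the restriction on the shape of $\e$ using that $M_{\calH}$ is non-increasing: any $\e$-regular partition is also $\e'$-regular whenever $\e\leq\e'$, so $M_{\calH}(\e)\geq M_{\calH}(\e')$ for $\e\leq\e'$. Given arbitrary small $\e$, one can choose an integer $m$ with $\e\leq 1/(4m)\leq 2\e$ (possible once $\e\leq 1/8$, since the interval $[1/(8\e),1/(4\e)]$ then has length at least $1$); setting $\e'=1/(4m)\geq\e$ and applying the previous paragraph with $\e'$ in place of $\e$ gives
$$
M_{\calH}(\e)\geq M_{\calH}(\e')\geq (\e')^{-s_2}\geq (2\e)^{-s_2}\geq \tfrac12\e^{-s_2},
$$
valid for all $\e$ below a threshold $\e_0=\e_0(s_1,s_2)$. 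Finally, applying this with $s_2=1-1/k$ for $k=1,2,\dots$ and the corresponding thresholds, which we may take to decrease to $0$, one assembles a function $g(\e)\to 0$ as $\e\to 0$ with $M_{\calH}(\e)\geq \e^{-1+g(\e)}$; that is, $M_{\calH}(\e)\geq \e^{-1+o(1)}$.

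The only real content here is Lemma \ref{lem:lbblowup} itself, which is where one must actually show that the rigid, irreducible coordinate structure of a blowup of an element of $Irr(m)$ cannot be captured by fewer than polynomially many regular parts; that is the step I would expect to be the obstacle, but it has already been established. The remaining steps — invoking Corollary \ref{cor:bhchar} to place these blowups inside $\calH$ at all scales, dealing with the integrality of $m=\e^{-1}/4$, using monotonicity of $M_{\calH}$ in $\e$, and repackaging the family of polynomial lower bounds as a single $o(1)$ statement — are all routine.
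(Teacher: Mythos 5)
Your proof is correct and takes essentially the same route as the paper: the paper also invokes Lemma \ref{lem:lbblowup} applied to an $n$-blowup of an element of $Irr(m)$ with $m=\e^{-1}/4$, supplied by the hypothesis on $\calB_{\calH}$ via Theorem \ref{thm:prime}. Your additional care about the integrality of $\e^{-1}/4$ (via monotonicity of $M_{\calH}$) and about assembling the family of bounds $\e^{-s_2}$, $s_2<1$, into a single $\e^{-1+o(1)}$ statement is a welcome tightening of details that the paper leaves implicit, but it does not constitute a different argument.
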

\begin{proof}
Fix $0<t<1/2$, and assume $\e\in (0,1)$ is sufficiently small compared to $2^{-1/t}$.  Set $m=\lfloor\e^{-1+t}/4\rfloor$. By Corollary \ref{cor:bhchar}, there exists some $G\in {\bf B}_{\calH} \cap \Irr(m)$. Fix any integer $n\geq 1$. Since $G\in {\bf B}_{\calH} $, there is some ${\bf G}\in \calH$ so that ${\bf G}$ is an $n$-blowup of $G$.  By Lemma \ref{lem:lbblowup0}, any $\e$-regular partition of ${\bf G}$ requires at least $\e^{-1+2t}$ parts.  

This argument shows that for all $0<\gamma<1$, we have that for all sufficiently small $\e$, $M_{\calH}(\e)\geq \e^{-1+\gamma}$.  In other words, $M_{\calH}(\e)\geq \e^{-1+o(1)}$.
\end{proof}

We now shift gears to  proving that when  ${\bf B}_{\calH}$ contains only finitely many almost prime graphs up to isomorphism, $M_{\calH}(\e)$ is a constant function.  In fact, we will show that in this case,  $M_{\calH}(\e)$ is  equal to the maximum number of $\sim_G$-classes appearing in any $G\in {\bf B}_{\calH}$.  This result will require some preliminaries on almost prime graphs. We first observe the following bounds on the vertex set of an almost prime graph $G$, based on the number of $\sim_G$-classes.     

\begin{observation}\label{ob:sizeAP}
Let $G=(V,E)$ be an almost prime graph, and let $\ell$ be the number of $\sim_G$-classes in $G$. Then $\ell\leq |V|\leq 2\ell$. 
\end{observation}

Observation \ref{ob:sizeAP} is immediate from Definition \ref{def:irr}. Our next lemma says that any graph contains an almost prime induced subgraph with the same number of $\sim$-classes.

\begin{lemma}\label{lem:irrgraphs}
Let $G=(V,E)$ be a graph and let $\ell$ be the number of $\sim_G$-classes in $G$. Then  $G$ contains an almost prime induced subgraph $G'$ with $\ell$-many $\sim_{G'}$-classes.   
\end{lemma} 

 The $G'$ in Lemma \ref{lem:irrgraphs} can be built by simply taking one vertex from each $\sim_G$-class of size $1$, and two vertices from every $\sim_G$-class of size larger than $1$. A formal proof appears in Appendix \ref{ss:ap1}.   

Our final preliminary is more substantial. Specifically, we will prove that the class of graphs with at most a fixed number of $\sim$-classes can be characterized by finitely many forbidden induced subgraphs.  Towards stating this result, we set notation for a special class of graphs.  

\begin{definition}\label{def:fc2}
Let $t\geq 1$ be an integer.  Recalling that $\calG^{(2)}$ denotes the class of finite graphs, define  
\begin{align*}
\calF_t=\{F\in \calG^{(2)}: \text{ $F$  is almost prime, $F$ has at least $t$-many $\sim_F$-classes, }&\\
 \text{ and $|V(F)|\leq 4t$}\}&.
\end{align*}
\end{definition}

Clearly we have that for any integer $t\geq 1$, $\calF_t$ contains only finitely many non-isomorphic  graphs.   We will prove that for any integer $C\geq 1$, the class of graphs with at most $C$-many $\sim$-classes is characterized by omitting the elements of $\calF_{C+1}$.

\begin{theorem}\label{thm:irrgraphs2}
Let $C\geq 1$ be an integer.  Recalling $\calG^{(2)}$ denotes the class of all finite graphs, define
$$
\calH_C=\{G\in \calG^{(2)}: G\text{ has at most $C$-many $\sim_G$-classes}\}.
$$
Then $\calH_C=  \Forb(\calF_{C+1})$, where $\calF_{C+1}$ is from Definition \ref{def:fc2}.  
\end{theorem}

The proof of Theorem \ref{thm:irrgraphs2} is not too difficult, but requires several steps.  We include its proof in Appendix \ref{ss:ap1}. We next use Theorem \ref{thm:irrgraphs2} to show that if ${\bf B}_{\calH}\cap \calF_{C+1}$ is empty, then $M_{\calH}$ is bounded above by $C$.

\begin{theorem}\label{thm:bhfiniteg1}
 For any integer $C\geq 1$ and any hereditary graph property  $\calH$ satisfying ${\bf B}_{\calH}\cap \calF_{C+1}=\emptyset$,  then for all sufficiently small $\e>0$, $M_{\calH}(\e)\leq M_{\calH}^{\hom}(\e^3)\leq C$. 
\end{theorem}
\begin{proof}
 Let $\calH_C=\{G\in \calG^{(2)}: G\text{ has at most $C$-many $\sim_G$-classes}\}$. By Theorem \ref{thm:irrgraphs2}, $\calH_C=\Forb(\calF_{C+1})$. Since  $\calF_{C+1}\cap {\bf B}_{\calH}=\emptyset$,  Theorem \ref{thm:blowupthm} implies $\calH$ is close to $\calH_C$.  

We now show $M^{\hom}_{\calH_C}(\e)\leq C$.  Let $\e$ be sufficiently small compared to $C^{-1}$, and let $G=(V,E)\in \calH_C$ with $|V|$ sufficiently large compared to $\e^{-1}$ and $C$.  Since $G\in \calH_C$, $G$ has at most $C$-many $\sim_{G}$-classes. Say $U_1,\ldots, U_t$ is an enumeration of the $\sim_{G}$-classes in $G$, for some $t\leq C$.  We claim $\calU=\{U_1,\ldots, U_t\}$ is an $\e$-homogeneous partition of $G$.  First, we define the collection of very small parts in this partition. 
$$
\calU_{\sm}=\{U_i\in \calU : |U_i|\leq \e^{-8}\}.
$$
Clearly $|\bigcup_{U_i\in \calU_{\sm}}U_i|\leq \e^{-8}t\leq \e^8|V|$, where the last inequality is because $t\leq C$ and $|V|$ is sufficiently large compared to $\e^{-1}$ and $C$.   Now let
$$
\Sigma_{\bg}=\{(U_i,U_j)\in \calU^2: U_i,U_j\notin \calU_{\sm}\}.
$$
Then 
$$
\left|\bigcup_{(U_i,U_j)\in \Sigma_{\bg}}U_i\times U_j\right|\geq \left|\bigcup_{(U_i,U_j)\in \calU^2}U_i\times U_j\right|-2\left|V\right|\left|\bigcup_{U_i\in \calU_{\sm}}U_i\right|\geq |V|^2-2\e^8|V|^2>(1-\e)|V|^2,
$$
 where the last inequality is because $\e$ is sufficiently small. Thus, to show $\calU$ is $\e$-homogeneous with respect to $G$, it suffices to show every $(U_i,U_j)\in \Sigma_{\bg}$ is $\e$-homogeneous with respect to $G$.  
 
Fix $(U_i,U_j)\in \Sigma_{\bg}$.  Since $U_i$ and $U_j$ are $\sim_{G}$ classes, we have that either $i\neq j$ and $d_{G}(U_i,U_j)\in \{0,1\}$, or $i=j$ and $d_{G}(U_i,U_j)\in \{0, 1-\frac{1}{|U_i|}\}$ (recall no vertex can be connected to itself).  As $U_i\notin \calU_{\sm}$, we have in all these cases that $d_{G}(U_i,U_j)\in \{0\}\cup (1-\e^8,1]$.  This implies $(U_i,U_j)$ is $\e$-homogeneous with respect to $G$, as desired.  This completes our verification that $M^{\hom}_{\calH_C}(\e)\leq C$.  

Since $\calH$ is close to $\calH_C$, Proposition \ref{prop:closehomgraphs} and $M^{\hom}_{\calH_C}(\e)\leq C$  imply $M_{\calH}^{\hom}(2\e)\leq C$. By Fact \ref{fact:homub}, we can conclude that for all sufficiently small $\e$, $M_{\calH}(\e)\leq M_{\calH}^{\hom}(\e^3)\leq C$.  
\end{proof}

We next show that $M_{\calH}(\e)$ can be lower bounded by the number of $\sim_G$-classes appearing in an  almost prime element $G$ of ${\bf B}_{\calH}$.  

\begin{theorem}\label{thm:bhfinite2g}
Suppose $C\geq 1$ is an integer and $G$ is an almost prime graph with $C$-many $\sim_G$-classes.  Then for any hereditary graph property  $\calH$ with $G\in {\bf B}_{\calH}$,  $M_{\calH}(\e)\geq C$. 
\end{theorem}
\begin{proof}
 Let $\e$ be sufficiently small compared to $C^{-1}$ and let $n$ be sufficiently large compared to $\e^{-1}$. Let $U_1,\ldots, U_C$ enumerate the $\sim_G$-classes of $G$. By assumption,  there is some ${\bf G}\in \calH$ which is an $n$-blowup of $G$.  Say ${\bf G}$ has vertex set $\bigcup_{x\in V(G)}V_x$ as in Definition \ref{def:blowupgraph}.  For each $1\leq i\leq C$, let ${\bf U}_i=\bigcup_{x\in U_i}V_x$. Note $|V(G)|\leq 2C$ by Observation \ref{ob:sizeAP}. 

Since $\e$ is sufficiently small compared to $C^{-1}$ and $|V(G)|\leq 2C$, we may assume that $\e<\min\{1/4,(2|V(G)|)^{-1/2}\}$.  Consequently, we may apply Lemma \ref{lem:blowup0} to $G$ and ${\bf G}$ with parameters $s=1/2$ and $\e$.  Let $\calP$ be any $\e$-regular partition of ${\bf G}$. By Lemma \ref{lem:blowup0} there exists $\calP'\subseteq \calP$ satisfying $|\bigcup_{P\in \calP'}P|\geq (1-\e^{1/2})|V({\bf G})|$,   such that for all $P\in \calP'$, there is $1\leq g(P)\leq C$ such that $|P\cap {\bf U}_{g(P)}|\geq (1-\e^{1/2})|P|$.  We then have that  
\begin{align*}
n\left|\bigcup_{i\in [C]\setminus \{g(P): P\in \calP'\}}U_i\right|&= \left|\bigcup_{i\in [C]\setminus \{g(P): P\in \calP'\}}{\bf U}_i\right|= \Bigg|V({\bf G})\setminus \Bigg(\bigcup_{P\in \calP'}{\bf U}_{g(P)}\Bigg)\Bigg|.
\end{align*}
This is at most
\begin{align*}
  \Bigg|V({\bf G})\setminus \Bigg(\bigcup_{P\in \calP'}P\Bigg)\Bigg|+\Bigg|\Bigg(\bigcup_{P\in \calP'}P\Bigg)\setminus \Bigg(\bigcup_{P\in \calP'}{\bf U}_{g(P)}\Bigg)\Bigg|&\leq \sqrt{\e}|V({\bf G})|+\sum_{P\in \calP'}|P\setminus {\bf U}_{g(P)}|\\
&\leq \sqrt{\e}|V({\bf G})|+\sum_{P\in \calP'}\sqrt{\e}|P|\\
&\leq 2\sqrt{\e}|V({\bf G})| \\
&= 2\sqrt{\e}|V(G)|n.
\end{align*}
This shows $n|\bigcup_{i\in [C]\setminus \{g(P): P\in \calP'\}}U_i|\leq 2\sqrt{\e}|V(G)|n$. After canceling the $n$ and rearranging, we have
\begin{align}\label{al:cbound}
\left|\bigcup_{i\in [C]\setminus \{g(P): P\in \calP'\}}U_i\right|\leq 2\sqrt{\e}|V(G)|\leq 4\sqrt{\e}C<1,
\end{align}
where the second   inequality uses that $|V(G)|\leq 2C$ (since $G$ is almost prime), and the last inequality is because $\e$ is sufficiently small compared to $C^{-1}$.  Since the first term appearing in (\ref{al:cbound}) is a non-negative integer, it must be $0$. The only way this is possible is if $[C]\setminus \{g(P): P\in \calP'\}=\emptyset$. Thus we have $\{g(P): P\in \calP'\}=[C]$, and consequently $|\calP|\geq  |\{g(P): P\in \calP'\}|=C$.
\end{proof}

We can now prove that if ${\bf B}_{\calH} $ contains finitely many almost prime graphs up to isomorphism, then $M_{\calH}$ is a constant function.  Moreover, we compute said constant explicitly, showing it is equal to the maximum number of  distinct $\sim_G$-classes appearing in any element $G$ of ${\bf B}_{\calH}$.

\begin{lemma}\label{lem:finitegraph}
Suppose $\calH$ is a hereditary graph property and ${\bf B}_{\calH}$ contains finitely many almost prime graphs up to isomorphism.  Then there exists an integer $C>0$ such that 
\begin{align*}
C=M_{\calH}(\e)=M^{\hom}_{\calH}(\e)&=\max\{\ell\in \mathbb{N}^{\geq 1}: \text{there exists $G\in {\bf B}_{\calH}$ with $\ell$-many $\sim_G$-classes}\}\\
&=\max\{\ell \in \mathbb{N}^{\geq 1}: {\bf B}_{\calH}\cap \calF_{\ell}\neq \emptyset\},
\end{align*}
where the $\calF_{\ell}$ are from Definition \ref{def:fc2}.
\end{lemma}
\begin{proof}
We will consider the following three sets of natural numbers. 
\begin{align*}
A_1&=\{\ell\in \mathbb{N}^{\geq 1}: \text{ there is an almost prime }H\in {\bf B}_{\calH} \text{ with $\ell$-many $\sim_H$-classes}\}.\\
A_2&=\{\ell\in \mathbb{N}^{\geq 1}: \text{ there is some }H\in {\bf B}_{\calH} \text{ with $\ell$-many $\sim_H$-classes}\}.\\
A_3&=\{\ell \in \mathbb{N}^{\geq 1}: {\bf B}_{\calH}\cap \calF_{\ell}\neq \emptyset\}.
\end{align*}
Since ${\bf B}_{\calH}$ is hereditary (by Fact \ref{fact:bhhgraphs}), Lemma \ref{lem:irrgraphs} implies $A_1=A_2$.  Fact \ref{fact:bhhgraphs} also implies ${\bf B}_{\calH}$ is nonempty, and consequently, $A_2=A_1$ is nonempty.  By assumption, $A_1$ is finite.  Combining these observations, we see that the integer $C$ below is well defined.
\begin{align*} 
C=\max A_1=\max A_2.  
\end{align*}
Given an integer $t\geq 1$, all elements of $\calF_t$ have at least $t$-many $\sim$-classes. Since $C=\max A_2$, this implies ${\bf B}_{\calH}\cap \calF_t=\emptyset$ for all $t>C$, and thus, $A_3\subseteq \{1,\ldots, C\}$.   By definition, $\calF_1$ contains the trivial graph $G_{\text{triv}}$ with one vertex. Since ${\bf B}_{\calH}$ is hereditary, we have $G_{\text{triv}}\in \calF_1\cap {\bf B}_{\calH}$, and thus, $1\in A_3$.  These observations tell us  the integer $L=\max A_3$ is well defined and satisfies $1\leq L\leq C$. We next show $L=C$. If $C=1$, then this is immediate from $1\leq L\leq C$. If $C>1$, then $C=\max A_2$ tells us ${\bf B}_{\calH}\nsubseteq \calH_{C-1}$, where $\calH_{C-1}$ is as in Theorem \ref{thm:irrgraphs2}. Since Theorem \ref{thm:irrgraphs2} tells us $\calH_{C-1}=\Forb(\calF_C)$ and ${\bf B}_{\calH}$ is hereditary, this implies ${\bf B}_{\calH}\cap \calF_C \neq \emptyset$. This yields that $C\leq L$, and thus $C=L$.   
 
 By Theorem \ref{thm:bhfinite2g} and since $C=\max A_1$,  $M_{\calH}(\e)\geq C$.  By Theorem \ref{thm:bhfiniteg1} and since $C=\max A_3$, we have $M_{\calH}(\e)\leq M_{\calH}^{\hom}(\e^3)\leq C$.  Combining everything together, we have shown that for all sufficiently small $\e>0$, 
$$
M_{\calH}(\e)=M_{\calH}^{\hom}(\e)=C=\max A_1=\max A_2=\max A_3.
$$
  \end{proof}
  
  Corollary \ref{cor:bhinf} and Lemma \ref{lem:finitegraph} immediately imply the existence of a jump from constant to polynomial growth.   
  
  \begin{corollary}\label{cor:constpoly2}
For any hereditary  graph property $\calH$, one of the following holds.  
\begin{enumerate}
\item ${\bf B}_{\calH}$ contains arbitrarily large almost prime graphs. In this case $M_{\calH}(\e)\geq \Omega(\e^{-1+o(1)})$.
\item ${\bf B}_{\calH}$ contains  finitely many non-isomorphic almost prime graphs. In this case, $M_{\calH}(\e)=M_{\calH}^{\hom}(\e)=C$, where $C\in \mathbb{N}^{\geq 1}$ is equal to the maximum number of $\sim$-classes appearing in any element of ${\bf B}_{\calH}$. 
\end{enumerate}
\end{corollary}

We now  prove our main result about $M_{\calH}$ in the graphs case, Theorem \ref{thm:alljumphom}.

\vspace{2mm}

\noindent{\bf Proof of Theorem \ref{thm:alljumphom}.}
Suppose first $\calH$ has infinite VC-dimension. Then by Corollary \ref{cor:fastgraphs}(1),  $\Tw(\Omega(\e^{-2}))\leq M_{\calH}(\e)\leq \Tw(O(\e^{-4}))$, so (1) holds.

Suppose now $\calH$ has finite VC-dimension, say $\VC(\calH)=k$. If ${\bf B}_{\calH}$ contains infinitely many non-isomorphic almost prime graphs, then Corollaries \ref{cor:fastgraphs}(2) and \ref{cor:bhinf} imply there is a constant $C=C(k)>0$ such that
$$
\e^{-1+o(1)}\leq M_{\calH}(\e)\leq M_{\calH}^{\hom}(\e^3)\leq C\e^{-6k-3},
$$
 so (2) holds.

We are left with the case where $\calH$ has finite VC-dimension and ${\bf B}_{\calH}$ contains only finitely many non-isomorphic almost prime graphs.  By Lemma \ref{lem:finitegraph}, there is a constant $C>0$ such that $M_{\calH}(\e)=M_{\calH}^{\hom}(\e)=C$, so (3) holds.
\qed

\section{Background on VC-dimension in $3$-graphs}\label{sec:weakreg}

In this section, we cover two versions of VC-dimension for $3$-graphs and their connections to homogeneous partitions.  We begin by defining what is simply called the VC-dimension of a $3$-graph, which is the natural analogue of Definition \ref{def:vc}.  

\begin{definition}\label{def:vchg}
Suppose $H=(V,E)$ is a $3$-graph.  The \emph{VC-dimension of $H$} is the VC-dimension of the set system $(V,\calF)$, where 
$$
\calF=\left\{N_H(uv): uv\in {V\choose 2}\right\}.
$$ 
\end{definition}

Definition \ref{def:vchg}  extends to hereditary $3$-graph properties in the natural way.

\begin{definition}\label{def:vc3graph}
 For a hereditary $3$-graph property $\calH$, the \emph{VC-dimension of $\calH$} is 
$$
\VC(\calH)=\sup\{\VC(H):H\in \calH\}\in \mathbb{N}\cup \{\infty\}.
$$
When $\VC(\calH)<\infty$, we say $\calH$ \emph{has finite VC-dimension}, and when $\VC(\calH)=\infty$, we say $\calH$ \emph{has infinite VC-dimension}.
\end{definition}

We next define a special collection of finite $3$-graphs which we will use to characterize when $\VC(\calH)=\infty$.
 
\begin{definition}\label{def:ukhat} Given $k\geq 1$, let $\widehat{\PS(k)}$ denote the class of all $3$-graphs $H$ such that the vertex set $V(H)$ admits an indexing (possibly with repetitions)  of the form
$$
V(H)=\{a_i,b_i: i\in [k]\}\cup \{c_S: S\subseteq [k]\},
$$
 such that $\{a_i,b_i: i\in [k]\}\cap \{c_S: S\subseteq [k]\}=\emptyset$, and such that the edge set $E(H)$ satisfies
$$
\{a_ib_ic_S: S\subseteq [k], i\in S \}\subseteq E(H)\text{ and }  \{a_ib_ic_S : S\subseteq [k], i\in [k]\setminus S\}\cap E(H)=\emptyset.
$$  
\end{definition}

Note that Definition \ref{def:ukhat} says nothing about triples in ${V(H)\choose 3}$ not of the form $a_ib_ic_S$ for some $i\in [k]$ and $S\subseteq [k]$.  This means $\widehat{\PS(k)}$ will contain many non-isomorphic $3$-graphs.  On the other hand, every element in $\widehat{\PS(k)}$ has at most $2k+2^k$ vertices, so $\widehat{\PS(k)}$ contains only finitely many $3$-graphs up to isomorphism. Observe that in Definition \ref{def:ukhat}, the two sets $\{a_i,b_i:i\in [k]\}$ and $\{c_S:S\subseteq [k]\}$ are assumed to be disjoint.  This disjointness requirement will be convenient in later proofs.  We will use the following characterization of hereditary $3$-graph properties with infinite VC-dimension, stated in terms of $\widehat{\PS(k)}$. 

\begin{proposition}\label{prop:equiv22}
For any hereditary $3$-graph property $\calH$, the following are equivalent.
\begin{enumerate}
\item $\calH$ has infinite VC-dimension.
\item For all  integers $k\geq 1$, $ \calH  \cap \widehat{\PS(k)}\neq \emptyset$. 
\end{enumerate}
\end{proposition}

The proof of Proposition \ref{prop:equiv22} consists of standard exercises involving the Sauer-Shelah lemma and is included in the appendix (see Appendix \ref{ss:appclose}).

 As mentioned in Section \ref{sec:graphs}, work of Alon--Fischer--Newman and Lov\'{a}sz--Szegedy first showed that graphs of bounded VC-dimension have homogeneous partitions  with extremely efficient bounds. Extensions of these results to hypergraphs were later proved independently by Chernikov--Starchenko \cite{Chernikov.2016zb} and  Fox--Pach--Suk \cite{Fox.2017bfo}.\footnote{The theorems proved in both \cite{Chernikov.2016zb} and \cite{Fox.2017bfo} are stronger and more detailed than what is stated in Theorem \ref{thm:foxpachsuk}.  In particular, both results apply to $r$-graphs for any $r\geq 2$, and give  explicit expressions for $C$ in terms of the dual VC-dimension of the hypergraph.  The theorem in \cite{Fox.2017bfo} obtains equipartitions and a better bound for the constant $C$ than \cite{Chernikov.2016zb}.  On the other hand, the partition obtained in \cite{Chernikov.2016zb}, while not necessarily equitable,  is definable (a property of central interest to model theorists).}

\begin{theorem}[Chernikov--Starchenko \cite{Chernikov.2016zb}, Fox--Pach--Suk \cite{Fox.2017bfo}]\label{thm:foxpachsuk}
For all integers $k\geq 0$ there exists $C=C(k)>0$ so that the following holds.  Suppose $H$ is a $3$-graph with VC-dimension at most $k$.  Then $H$ has an $\e$-homogeneous partition of size at most $\e^{-C}$.
\end{theorem}

 Theorem \ref{thm:foxpachsuk} immediately implies that if $\VC(\calH)<\infty$, then $M_{\calH}^{\hom}(\e)$ is bounded above by a polynomial in $\e^{-1}$.   In light of Proposition \ref{prop:closehom}, this will also hold for any $\calH$ which is close to a property $\calH'$ with $\VC(\calH')<\infty$. This motivates the following definition.

\begin{definition}\label{def:clvc}
Suppose $\calH$ is a hereditary $3$-graph property.  We say $\calH$ is \emph{close to finite VC-dimension} if $\calH$ is close to some $\calH'$ with $\VC(\calH')<\infty$.  Otherwise, we say $\calH$ is \emph{far from finite VC-dimension}.
\end{definition}

In contrast to the graphs setting, a hereditary $3$-graph property may be close to finite VC-dimension without actually having finite VC-dimension.  For this reason, we require a combinatorial characterization of when Definition \ref{def:clvc} holds.  To our knowledge, no such result appears explicitly in the literature, so we will provide one here.   

\begin{proposition}\label{prop:equiv2}
For any hereditary $3$-graph property $\calH$, the following are equivalent.
\begin{enumerate}
\item $\calH$ is close to finite VC-dimension.
\item There exists an  integer $k\geq 1$ such that ${\bf B}_{ \calH  }\cap \widehat{\PS(k)}=\emptyset$.   
\item ${\bf B}_{\calH}$ has finite VC-dimension.
\end{enumerate}
\end{proposition}
\begin{proof}
That (2) and (3) are equivalent follows immediately from Proposition \ref{prop:equiv22} (and implicitly Fact \ref{fact:bhhgraphs}).  We now show (1) and (2) are equivalent.  Suppose first (1) holds. Then there is some hereditary $3$-graph property $\calH'$ with $\VC(\calH')<\infty$ such that $\calH$ is close to $\calH'$.  By Proposition \ref{prop:equiv22}, there is $k\geq 1$ such that $\calH'\cap \widehat{\PS(k)}=\emptyset$.  By Theorem \ref{thm:blowupthm}, $\widehat{\PS(k)}\cap {\bf B}_{\calH}=\emptyset$, so (2) holds. 

 Assume now (2) holds. Then there is some $k\geq 1$ such that ${\bf B}_{\calH}\cap \widehat{\PS(k)}=\emptyset$.  By Theorem \ref{thm:blowupthm}, $\calH$ is close to $\Forb(\widehat{\PS(k)})$. By Proposition \ref{prop:equiv22}, $\Forb(\widehat{\PS(k)})$ has finite VC-dimension, so by definition, $\calH$ is close to finite VC-dimension.\end{proof}
 
 The reader may naturally wonder at this stage  if the three conditions appearing in Proposition \ref{prop:equiv2} are also equivalent to $M_{\calH}^{\hom}<\infty$.  It turns out that this is not the case. The existence of homogeneous partitions can be extended to a wider class of $3$-graphs defined in terms of a different generalization of VC-dimension called ``slicewise VC-dimension."  Slicewise VC-dimension is defined in terms of the following notion of a ``slice graph," also called a ``link graph" in the literature.

\begin{definition}\label{def:slicegraph}
Suppose $H=(V,E)$ is a $3$-graph with at least $2$ vertices.  For each $x\in V$,  the \emph{slice graph of $H$ at $x$}, denoted $H_x$, is the graph with vertex set $V\setminus \{x\}$ and edge set $\{yz\in {V\setminus \{x\}\choose 2}: xyz\in E\}$.
\end{definition} 

We now give an analogue of VC-dimension for $3$-graphs, distinct from Definition \ref{def:vc3graph}, which is defined in terms of slice graphs.

\begin{definition}\label{def:slvc}
If $H=(V,E)$ is a $3$-graph with at least $2$ vertices, then the \emph{slicewise VC-dimension (SVC-dimension) of $H$ } is 
$$
\SVC(H):=\max\{\VC(H_x): x\in V\}.
$$
If $H$ is a $3$-graph with only $1$ vertex, set $\SVC(H)=0$ by convention.  
\end{definition}

 It is an easy exercise to see that for any $3$-graph $H$, $\SVC(H)\leq \VC(H)$.  On the other hand, there exist easily defined $3$-graphs with arbitrarily large VC-dimension and SVC-dimension 1 (see, e.g., the examples used to prove Proposition 2.28 in \cite{Terry.2021b}).  Thus, bounding the SVC-dimension of a $3$-graph is a genuinely weaker assumption than bounding its VC-dimension.

We extend Definition \ref{def:slvc} to hereditary properties in analogy to Definition \ref{def:vc3graph}.

\begin{definition}\label{def:slvchp}
For a hereditary $3$-graph property $\calH$, the \emph{slicewise VC-dimension of $\calH$} is 
 $$
 \SVC(\calH)=\sup\{\SVC(H):H\in \calH\}\in \mathbb{N}\cup \{\infty\}.
 $$
 We will also refer to  $\SVC(\calH)$ as the \emph{SVC-dimension of $\calH$}.  When $\SVC(\calH)<\infty$, we say $\calH$ \emph{has finite SVC-dimension}, and when $\SVC(\calH)=\infty$, we say $\calH$ \emph{has infinite SVC-dimension}.
\end{definition}

Work of Wolf and the author \cite{Terry.2021b} and, independently, Chernikov and Towsner \cite{Chernikov.2020}, showed that $3$-graphs with bounded SVC-dimension admit homogeneous partitions.  

\begin{theorem}\label{thm:slvcwow}
For all integers $k\geq 0$ and $\e\in (0,1)$, there is an integer $M=M(\e,k)\geq 1$ so that any $3$-graph $H$ with $\SVC(H)\leq k$ admits an $\e$-homogeneous partition with at most $M$ parts.
\end{theorem}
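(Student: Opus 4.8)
The plan is to lift the graph-case machinery (Theorem~\ref{thm:foxpachsukagain} and Lemma~\ref{lem:graphhom}) to the family of slice graphs of $H$. Write $V=V(H)$; the hypothesis $\SVC(H)<k$ says exactly that every slice graph $H_x$, $x\in V$, has VC-dimension $<k$, so $\{H_x:x\in V\}$ is a family of graphs on the common vertex set $V$ of uniformly bounded VC-dimension. The goal then reduces to producing a single partition $\calP$ of $V$, of size bounded in terms of $k$ and $\e$ only, which is \emph{simultaneously} $\e'$-homogeneous for every $H_x$ (for a suitable $\e'\ll\e$) and which in addition separates vertices according to the sign pattern of their slices, in the sense that for most part-pairs $(B,C)\in\calP^2$ and most parts $A\in\calP$, the density $d_{H_x}(B,C)$ lies in $[0,\e')$ for almost all $x\in A$, or in $(1-\e',1]$ for almost all $x\in A$. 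Granting such a $\calP$, a short averaging argument (Lemma~\ref{lem:averaging}) together with the identity $d_H(A,B,C)=\frac{1}{|A|}\sum_{x\in A}d_{H_x}(B,C)$ and a union bound over the boundedly many triples of $\calP^3$ shows that $\calP$ is $\e$-homogeneous for $H$ itself; here one also uses Proposition~\ref{prop:2.21} to absorb the loss between the homogeneity thresholds after adjusting $\e'$.

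The main tool for constructing $\calP$ is the following consequence of Theorem~\ref{thm:foxpachsukagain}: every graph of VC-dimension $<k$ is $\delta$-close in edit distance to a simple balanced blow-up of one of a bounded number $N=N(\delta,k)$ of ``template'' graphs (the template being the majority-sign reduced graph of a $\delta$-homogeneous equipartition, which has size $O(\delta^{-2k-1})$). I would build $\calP$ by an energy-increment (defect) procedure. Start with $\calP_0=\{V\}$. Given $\calP_i$, if $\calP_i$ already has both properties above we stop; otherwise there is a part $A$ and parts $B,C\in\calP_i$, carrying a non-negligible fraction of triples, for which either (i) $d_{H_x}(B,C)$ is bounded away from $\{0,1\}$ for a non-negligible set of $x\in A$, or (ii) $d_{H_x}(B,C)$ is near $0$ for a non-negligible set of $x\in A$ and near $1$ for another. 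To handle this while enlarging the partition by only a bounded amount, first refine $A$ according to which of the $N(\delta,k)$ templates the bipartite graph $H_x[B,C]$ is $\delta$-close to; on each resulting piece $A''$ all the graphs $H_x[B,C]$, $x\in A''$, are $\delta$-close to a \emph{common} template $T$. In case (i) this $T$ is not near-homogeneous, hence (by the contrapositive of Lemma~\ref{lem:graphhom}) irregular, and the standard irregularity refinement of $B$ and $C$ relative to $T$ works for every $x\in A''$ at once; in case (ii) one simply refines $A''$ by the sign of $d_{H_x}(B,C)$. In either case the averaged energy $\frac{1}{|V|}\sum_{x\in V}\sum_{B,C\in\calP}d_{H_x}(B,C)^2|B||C|/|V|^2$ increases by a constant depending only on $\e,\delta,k$, and since this quantity lies in $[0,1]$ the process terminates after a bounded number of steps, outputting the desired $\calP$ with $|\calP|$ bounded in terms of $k$ and $\e$.

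The step I expect to be the main obstacle is precisely the ``combining across slices'' point buried in the previous paragraph: a priori the refinement of $B,C$ needed to homogenise one slice $H_x$ has nothing to do with the one needed for another slice $H_{x'}$, so a genuine common refinement would blow up with $|V|$. What rescues the argument is that bounded \emph{slicewise} VC-dimension forces the graphs $H_x[B,C]$ (for $x$ ranging over a part) to fall into only $N(\delta,k)$ closeness-classes, so that a single refinement per class suffices and the number of parts --- and the number of energy-increment steps --- stays bounded independently of $|V|$. Making the bookkeeping of this grouping-plus-refinement interact correctly with the energy increment --- in particular, checking that case (ii) really does produce a definite energy gain, and that refining by templates costs at most a bounded factor each step --- is where the real work lies; this is in essence the content of \cite{Terry.2021b} and \cite{Chernikov.2020}.
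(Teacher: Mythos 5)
The paper does not actually prove Theorem~\ref{thm:slvcwow}; it is quoted from \cite{Terry.2021b} (whose proof goes through strong hypergraph regularity, i.e., simultaneous partitions of vertices and pairs, yielding Wowzer-type bounds) and \cite{Chernikov.2020} (a non-quantitative, compactness-based argument), with the quantitative improvement deferred to Part~1 of the series (Theorem~\ref{thm:slvc}). So your proposal is necessarily taking a different route from both references: a direct energy-increment on \emph{vertex} partitions, driven by grouping slices into template classes. If it worked it would be genuinely more elementary than either known argument and would bypass hypergraph regularity entirely.

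However, there is a real gap at precisely the step you flag, and I want to be concrete about why it is a gap rather than merely ``bookkeeping.'' From Theorem~\ref{thm:foxpachsukagain} you correctly get that every slice $H_x[B,C]$ is $\delta$-close to \emph{some} blow-up of one of boundedly many abstract templates $T$ on $O(\delta^{-2k})$ vertices. But ``close to a blow-up of the same abstract $T$'' is not an equivalence relation with boundedly many classes: the blow-up involves a choice of partition of $B$ and of $C$, and this choice can vary arbitrarily with $x$. Two slices both $\delta$-close to blow-ups of the same $T$ but with misaligned blow-up partitions can be far apart in edit distance, and a common refinement of $B,C$ that homogenises both would require taking the common refinement of the two underlying partitions --- which, over all $x$ in a part $A''$, has no a priori bound independent of $|V|$. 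So ``refine $A$ by template'' does not in fact produce pieces $A''$ on which a single refinement of $B,C$ suffices. Nor does the appeal to Lemma~\ref{lem:graphhom} help: there is no single graph $T$ sitting on $B\cup C$ whose irregularity you can refine by, only a family of $\delta$-close-to-$T$-blow-ups whose witnessing subsets of $B,C$ drift with $x$.

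Put differently, the statement you actually need is that the family $\{H_x[B,C]:x\in A\}$ has $\delta$-packing number (in edit distance) bounded in terms of $k$ and $\delta$ alone. This is a genuinely nontrivial consequence of bounded slicewise VC-dimension, not a corollary of the one-graph statements \ref{thm:foxpachsukagain} or \ref{lem:graphhom}. Note that the FPS $\VC(H)$ bounds the packing number of $\{H_x\}$ directly via Haussler's packing lemma, but $\SVC(H)<k$ is strictly weaker than $\VC(H)<k$ (the paper's inequality in \S\ref{sec:weakreg} is in fact stated backwards --- one has $\SVC(H)\le\VC(H)$, as one sees by taking $a_i=x$, $b_i=\alpha_i$, $c_S=\beta_S$ in a witness for $\SVC(H)\ge k$), so you cannot invoke Haussler under the hypothesis as given. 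Establishing the packing bound, or finding a way around it, is essentially the heart of the theorem, which is why the known proofs reach either for pair-partitions (so that the drifting blow-up structure of each slice can be captured once and for all at the pair level) or for model-theoretic compactness.
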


Theorem \ref{thm:slvcwow} immediately implies that $M_{\calH}^{\hom}<\infty$ whenever $\SVC(\calH)<\infty$.  However, the original proofs of this theorem did not produce efficient bounds on $M_{\calH}^{\hom}$. The proof by Chernikov and Towsner is non-quantitative, producing no explicit bounds.  The proof by the author and Wolf  is quantitative, but produces a wowzer-style bound for $M(\e,k)$ due to an application of a strong version of the hypergraph regularity lemma. A crucial ingredient in the current paper is the following result from part 1 \cite{Terry.2024a},\footnote{Definitions \ref{def:slicegraph} and \ref{def:slvc} differ slightly from those used in \cite{Terry.2024a}, however it is easy to check the $\SVC(H)$ in Definition \ref{def:slvc}  differs by at most $1$ compared to the $\SVC(H)$ defined in \cite{Terry.2024a}.} which gives a quantitative improvement on Theorem \ref{thm:slvcwow}.\footnote{See \cite{GSW} for a further  improvement by Gishboliner, Shapira, and Wigderson.}

\begin{theorem}[Theorem 1.4 of \cite{Terry.2024a}]\label{thm:slvc}
For all integers $k\geq 0$ there exists $C=C(k)>0$ so that for all sufficiently small $\e>0$, the following holds. Suppose $H$ is a sufficiently large $3$-graph with $\SVC(H)\leq k$. Then there exists an $\e$-homogeneous partition of $H$ of size at most $2^{2^{\e^{-C}}}$.
\end{theorem}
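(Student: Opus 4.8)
The plan is to build the $\e$-homogeneous partition as a bounded common refinement of two partitions of $U$: an \emph{inner} partition coming from applying the efficient VC-regularity lemma of Fox--Pach--Suk (Theorem~\ref{thm:foxpachsuk}) to the individual slice graphs $H_x$, and an \emph{outer} partition that records, for $x$ ranging over $U$, which ``pattern'' the slice $H_x$ exhibits. Fix $\delta$ polynomially small in $\e$. Since $\SVC(H)<k$, every slice $H_x$ has VC-dimension $<k$, so Theorem~\ref{thm:foxpachsuk} gives a $\delta$-homogeneous equipartition $\calP_x$ of $U$ into $q\le\delta^{-C_0}$ parts; equivalently $H_x$ is $\delta$-close in edit distance to the blow-up along $\calP_x$ of a graph $\Gamma_x$ on $q$ vertices. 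The number of possible patterns $\Gamma_x$ is at most $2^{\binom{q}{2}}\le 2^{\delta^{-2C_0}}$, already single-exponential in $\e^{-1}$. The genuine difficulty is that the partitions $\calP_x$ themselves — which say how $\Gamma_x$ is laid out on $U$ — vary with $x$ in a way that is not a priori controlled, so one cannot simply take a common refinement over all $x\in U$, as that could have size exponential in $|U|$.

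The key step is to feed the slicewise hypothesis back in through the \emph{second and third} coordinates to align these partitions. One takes a single random set $W\subseteq U$ of size polynomial in $\delta^{-1}$ and realizes $\calP_x$ as the partition of $U$ into the trace classes of $H_x$ on $W$; by uniform convergence over VC-classes, for (almost) every $x$ this is a $\delta$-homogeneous partition of $H_x$, and each part $P^x_j$ is a Boolean combination of at most $|W|$ sets of the form $N_F(w,x)$ with $w\in W$. For fixed $w$, the family $\{N_F(w,x):x\in U\}$ is precisely the neighbourhood family of the slice graph $H_w$, hence has VC-dimension $<k$; taking the union over the $|W|$ vertices $w\in W$ and then Boolean combinations of $\le|W|$ sets, the whole family $\{P^x_j:x\in U,\ j\le q\}$ has VC-dimension bounded polynomially in $\delta^{-1}$. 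Applying Theorem~\ref{thm:foxpachsuk} once more to the bipartite incidence structure of this family yields a \emph{master} equipartition $\calM$ of $U$, of size at most $2^{\e^{-C_1}}$, such that for all but a $\delta$-fraction of $x\in U$ every part $P^x_j$ is $\delta$-close to a union of parts of $\calM$. For such ``good'' $x$ the slice $H_x$ is then $O(\delta)$-close to the blow-up along $\calM$ of a pattern graph on $|\calM|$ vertices, of which there are at most $2^{|\calM|^2}\le 2^{2^{\e^{-C_2}}}$. Grouping the good vertices $x$ by this pattern gives a partition $\calX$ of almost all of $U$ into at most $2^{2^{\e^{-C_2}}}$ classes, and I would take the final partition $\calS$ to be the common refinement of $\calM$ and $\calX$; its size is at most $|\calM|\cdot|\calX|\le 2^{2^{\e^{-C}}}$, the claimed bound, with the two applications of a polynomial regularity bound composing to produce the double exponential.

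To verify $\calS$ is $\e$-homogeneous for $H$, fix a triple of parts $(S_a,S_b,S_c)$ with $S_a$ inside a class $X_\ell$ of $\calX$ (the vertices outside $\bigcup\calX$ number at most $\delta|U|$, contributing at most $\e|U|^3$ triples). For each good $x\in S_a$, the slice $H_x$ is $O(\delta)$-close to the blow-up of the single pattern $\Gamma_\ell$ along $\calM$; since $\calS$ refines $\calM$, both $S_b$ and $S_c$ lie in single parts of $\calM$, so $d_{H_x}(S_b,S_c)$ is within $O(\delta)$ of the $\{0,1\}$-value $\Gamma_\ell$ assigns to the relevant pair of $\calM$-parts, independently of $x$. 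Averaging over $x\in S_a$ gives $d_H(S_a,S_b,S_c)\in[0,\e)\cup(1-\e,1]$, for every triple of parts except those touching the small exceptional vertex set, the $\delta$-fraction of ``bad'' $x$, or the $\delta$-fraction of non-homogeneous $\calM$-pairs inside $\Gamma_\ell$ — altogether at most $\e|U|^3$ triples once $\delta$ is small enough in terms of $\e$.

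I expect the middle step to be the main obstacle: extracting the bounded master partition $\calM$ that simultaneously aligns the Fox--Pach--Suk partitions of almost all the slices. The naive count of distinct slice partitions is exponential in $|U|$, so the slicewise VC bound must be used not only at the first coordinate (to simplify each slice) but again at the remaining coordinates (to bound the VC-dimension of the family of parts $\{P^x_j\}$), and the two polynomial regularity inputs must be composed carefully. I would also expect the error bookkeeping across the two levels — arranging that the $\delta$-fraction of bad vertices and the $\delta$-fraction of non-homogeneous $\calM$-pairs stay below the $\e$ threshold for \emph{triple} densities, which is a cube rather than a square — to be the most delicate part of the write-up.
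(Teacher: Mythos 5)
This theorem is quoted verbatim from Part~1 of the series (``Theorem 1.4 of Part~1'') and is not proved in the present paper, so there is no in-paper argument to compare against. Judging your sketch on its own terms: the overall architecture — apply the efficient VC-homogeneity lemma to each slice $H_x$, then re-use the slicewise hypothesis \emph{through the other two coordinates} to align the per-slice partitions into a bounded master partition $\calM$, then group $x$ by its $\{0,1,\star\}$-pattern on $\calM^2$ — is a natural route to a double-exponential bound, and the way you bound $\VC(\{P^x_j\})$ by observing that $N_F(w,x)=N_{H_w}(x)$ is exactly the right place to feed the slicewise bound back in a second time. The accounting $|\calS|\le|\calM|\cdot|\calX|\le 2^{\e^{-C_1}}\cdot 3^{|\calM|^2}\le 2^{2^{\e^{-C}}}$ checks out provided $\calM$ really is single-exponential.

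Two places where the sketch, as written, is not yet a proof. First, the line ``since $\calS$ refines $\calM$, $d_{H_x}(S_b,S_c)$ is within $O(\delta)$ of the $\{0,1\}$-value $\Gamma_\ell$ assigns to $(M_b,M_c)$'' is false as stated: a bound $d_{H_x}(M_b,M_c)<\delta$ gives no control on $d_{H_x}(S_b,S_c)$ for an arbitrary sub-pair $S_b\subseteq M_b$, $S_c\subseteq M_c$, since all the few edges could concentrate there. You need instead the averaging identity $\sum_{S_b,S_c}d_H(S_a,S_b,S_c)|S_b||S_c|=d_H(S_a,M_b,M_c)|M_b||M_c|$ together with Markov to conclude that only a $\sqrt\delta$-weighted fraction of sub-triples are bad, and likewise for the $(1-\delta)$-dense pairs via complements. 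You partially anticipate this in the error-bookkeeping paragraph, but the verification step should be phrased this way from the start. Second, ``applying Theorem~\ref{thm:foxpachsuk} to the bipartite incidence structure yields $\calM$ such that for all but a $\delta$-fraction of $x$ every $P^x_j$ is $\delta$-close to a union of $\calM$-parts'' needs more than a citation: FPS gives homogeneity of \emph{pairs of parts} of the incidence graph, and passing from that to ``each individual $P^x_j$ is approximable by a union of $\calM$-parts'' requires another layer of averaging (first over $j\in[q]$, then over $x$), plus a check that the constant $c(k)$ in FPS grows at most exponentially in $k$ so that plugging in $k=\mathrm{poly}(\e^{-1})$ still yields $|\calM|\le 2^{\e^{-C_1}}$. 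Neither of these is fatal, but both are load-bearing and should be spelled out; since the source proof lives in Part~1, I cannot say whether this route matches it.
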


Theorem \ref{thm:slvc} implies that if $\calH$ has finite SVC-dimension, then $M_{\calH}^{\hom}$ is bounded above by a double exponential.  With Proposition \ref{prop:closehom} in mind, one can foresee that  the same conclusion will hold for any hereditary $3$-graph property which is  \emph{close to finite SVC-dimension}  in the following sense.  

\begin{definition}\label{def:clsvc}
Suppose $\calH$ is a hereditary $3$-graph property.  We say $\calH$ is \emph{close to finite SVC-dimension} if $\calH$ is close to some $\calH'$ with $\SVC(\calH')<\infty$.  Otherwise, we say $\calH$ is \emph{far from finite SVC-dimension}.
\end{definition}

It turns out that being close to finite SVC-dimension is necessary and sufficient to guarantee the existence of homogeneous partitions, a fact which was proved in \cite{Terry.2021b}.\footnote{Related results  in a different formalism were proved independently in \cite{Chernikov.2020}.}\footnote{Definitions \ref{def:slicegraph} and \ref{def:slvc} differ slightly from those used in \cite{Terry.2021b}. A straightforward exercise using the Sauer-Shelah lemma shows the $\SVC(H)$ from \cite{Terry.2021b} can be bounded above and below as a function of the $\SVC(H)$ in Definition \ref{def:slvc}. The result we will use from \cite{Terry.2021b} depends only on whether $\SVC(\calH)$ is finite or infinite, and is therefore unaffected by these differences.}  We will need this fact, in addition to combinatorial characterizations of when a hereditary property is close to finite SVC-dimension. Before stating these results, we must set some notation. 

We first define special sets of $3$-graphs, built by adjoining $n$ new vertices to a graph $G$ in a certain way.

\begin{definition}\label{def:otimes}
Suppose $G=(V, E)$ is a graph and $n\geq 1$ is an integer. Define $n\otimes G$ to be the class of $3$-graphs $H$ with vertex set of the form $V(H)=V\sqcup C$, where $C$ is a set of $n$ new vertices, and with edge set $E(H)$ satisfying the following.
\begin{align*}
 \left\{uwc: uw\in E, c\in C \right\}\subseteq E(H)\text{ and } \left\{uwc: uw\in {V\choose 2}\setminus E, c\in C\right\}\cap E(H)=\emptyset. 
\end{align*}
\end{definition}

Note that for any $3$-graph $H$ and $x\in V(H)$, $H\in 1\otimes H_x$.  On the other hand, if $H\in 1\otimes G$, then there must be some vertex $x\in V(H)$ such that $H_x\cong G$.  In light of these remarks, we have the following.

\begin{fact}\label{fact:svcbip}
Suppose $\calH$ is a hereditary $3$-graph property. Then 
$$
\SVC(\calH)=\sup\{\VC(G): (1\otimes G)\cap \calH\neq \emptyset\}.
$$
\end{fact}

Combining Fact \ref{fact:svcbip} with Fact \ref{fact:bipvc} yields several criteria equivalent to $\SVC(\calH)=\infty$.  One such criterion we will use is stated in terms of the following collection of special $3$-graphs.

\begin{definition}\label{def:psotimes}
Given $k\geq 1$, let $1\otimes \PS(k)=\bigcup_{G\in \PS(k)}1\otimes G$.
\end{definition}

\begin{fact}\label{fact:svcbip2}
Suppose $\calH$ is a hereditary $3$-graph property. Then the following are equivalent.
\begin{enumerate}
\item $\SVC(\calH)=\infty$.
\item For all $k\geq 1$, $\calH\cap (1\otimes \PS(k))\neq \emptyset$.
\end{enumerate}
\end{fact}

We provide a proof of Fact \ref{fact:svcbip2} from Facts \ref{fact:bipvc}  and \ref{fact:svcbip} in the appendix.  We end this section by stating the necessary equivalences for when a hereditary $3$-graph property $\calH$ is far from finite SVC-dimension.  This result is essentially Theorem 2.34 from \cite{Terry.2021b}, altered to suit the applications in this paper. 

\begin{theorem}\label{thm:vdischom}
Suppose $\calH$ is a hereditary $3$-graph property.  The following are equivalent.
\begin{enumerate}
\item For some $\e\in (0,1)$, $M_{\calH}^{\hom}(\e)=\infty$.\footnote{This property is called $\vdisc_3$-homogeneity in \cite{Terry.2021b}.}
\item $\calH$ is far from finite $\SVC$-dimension. 
\item ${\bf B}_{\calH}$ has infinite $\SVC$-dimension.
\item For all $k\geq 1$, ${\bf B}_{\calH}\cap (1\otimes \PS(k))\neq \emptyset$.
\item One of the following holds.
\begin{enumerate}
\item For all $n\geq 1$ and every bipartite graph $G$, $(n\otimes G)\cap  {\bf B}_{\calH}\neq \emptyset$. 
\item For all $n\geq 1$ and every co-bipartite graph $G$, $(n\otimes G)\cap  {\bf B}_{\calH}\neq \emptyset$.
\item For all $n\geq 1$ and every split graph $G$, $(n\otimes G)\cap  {\bf B}_{\calH}\neq \emptyset$.
\end{enumerate}
\end{enumerate}
\end{theorem}

The equivalence of (1) and (2) follows from Theorem 2.34 in \cite{Terry.2021b}.  As the remaining equivalences are slightly different from those appearing in \cite{Terry.2021b}, we provide a proof of Theorem \ref{thm:vdischom} in the appendix (see Appendix \ref{ss:appclose}).

\section{Jump from Almost Exponential to Tower}\label{sec:exptower}

In this section, we prove that there exists a jump from almost exponential to tower growth for hereditary $3$-graph properties.  Before doing so, we require two lemmas.  First, we will use the fact that the common refinement of a regular partition with a partition of size at most $2$ will still be fairly regular. 

\begin{lemma}\label{lem:slicingcor}
Let $H=(V,E)$ be a $3$-graph, and let $\calP$ be an $\e$-regular partition of $H$.  Suppose  $\calP'$ is a partition of $V$ with at most $2$ parts, and let $\calQ$ be the common refinement of $\calP$ with $\calP'$.   Then $|\bigcup_{(X,Y,Z)\in \Sigma}X\times Y\times Z|\geq (1-7\e^{1/2})|V|^3$, where 
$$
\Sigma=\{(X,Y,Z)\in \calQ^3: (X,Y,Z)\text{ is $2\e^{1/2}$-regular with respect to $H$}\}.
$$ 
\end{lemma}

Lemma \ref{lem:slicingcor} is a standard fact (see Lemma 6.7 in \cite{Fox.2014} for a similar statement in the graphs setting).  We provide a proof in Section \ref{ss:standardlemmas} of the appendix for the sake of completeness.  

Our next lemma shows that given a graph $G$, there are certain values of $n$ such that a regular partition of a $3$-graph from $n\otimes G$ induces a regular partition of the graph $G$  (see Definition \ref{def:otimes}).

\begin{lemma}\label{lem:timeslem}
Let $\e\in (0,1)$ be sufficiently small, and let $n\geq 1$ be an  integer. Suppose $G$ is a graph with $|V(G)|= 2n$ and  $H$ is a $3$-graph satisfying $H\in n\otimes G$.  Then if $H$ has an $\e$-regular partition of size $t$, $G$ has a  $48\e^{1/2}$-regular partition of size at most $t$. 
\end{lemma}
\begin{proof}
By Definition \ref{def:otimes}, we may assume $H$ has vertex set $V(G)\sqcup C$, where $C$ is a set of $n$ vertices, and edge set $E(H)$ satisfying
\begin{align}\label{al:timeslem1}
\{uwc: uw\in E(G), c\in C\}\subseteq E(H)\text{ and }\left\{uwc: uw\in {V(G)\choose 2}\setminus E(G), c\in C\right\}\cap E(H)=\emptyset.
\end{align}
 Note that our assumptions imply $|V(G)|=2n$ and $|V(H)|=3n$. Suppose $\calP=\{V_1,\ldots, V_t\}$ is an $\e$-regular partition of $H$ of size $t$.  Let $\calP'$ be the common refinement of $\calP$ with the partition $V(H)=V(G)\sqcup C$, and define
$$
\calQ=\{X\in \calP': X\subseteq V(G)\}.
$$
Clearly $\calQ$ is a partition of $V(G)$ and $|\calQ|\leq |\calP|=t$. We show that $\calQ$ is $48\e^{1/2}$-regular with respect to $G$.  We first observe that by Lemma \ref{lem:slicingcor}, if we set 
\begin{align*}
\Sigma_{\reg}&=\{(X,Y,Z)\in (\calP')^3: (X,Y,Z)\text{ is $2\e^{1/2}$-regular with respect to $H$}\}\text{ and }\\
\calE_{\reg}&=\bigcup_{(X,Y,Z)\in \Sigma_{\reg}}X\times Y\times Z,
\end{align*}
then $|\calE_{\reg}|\geq (1-7\e^{1/2})|V(H)|^3$. Consequently, 
\begin{align}\label{al:sreg}
\nonumber|\calE_{\reg}\cap (V(G)\times V(G)\times C)|\geq |V(G)|^2|C|-7\e^{1/2} |V(H)|^3&= \Bigg(1-\frac{7\cdot 27\e^{1/2}}{4}\Bigg)|V(G)|^2|C|\\
&>(1-48\e^{1/2})|V(G)|^2|C|,
\end{align}
where the equality uses that $|V(G)|=2n$, $|C|=n$, and $|V(H)|=3n$.    Since $C$ is a disjoint union of elements from $\calP'$, (\ref{al:sreg}) implies there exists some nonempty $Z\in \calP'$ such that $Z\subseteq C$ and such that
\begin{align}\label{al:timeslem2}
|\calE_{\reg}\cap (V(G)\times V(G)\times Z)|\geq (1-48\e^{1/2})|V(G)|^2|Z|.
\end{align}
Set $\Theta=\{(X,Y)\in \calQ^2: X\times Y\times Z \subseteq \calE_{\reg}  \}$.  By definition of $\Theta$ and (\ref{al:timeslem2}), we have 
\begin{align*} 
\left|\bigcup_{(X,Y)\in \Theta}X\times Y\right||Z|= |\calE_{\reg}\cap (V(G)\times V(G)\times Z)|\geq (1-48\e^{1/2})|V(G)|^2|Z|.
\end{align*}
Dividing by $|Z|$ yields that $|\bigcup_{(X,Y)\in \Theta}X\times Y | \geq (1-48\e^{1/2})|V(G)|^2$. It thus suffices to show every $(X,Y)\in \Theta$ is $48\e^{1/2}$-regular with respect to $G$.   We will show something stronger, namely that every $(X,Y)\in \Theta$ is $2\e^{1/2}$-regular with respect to $G$.

Fix $(X,Y)\in \Theta$.  Let  $X'\subseteq X$ and $Y'\subseteq Y$ satisfy $|X'|\geq 2\e^{1/2}|X|$ and $|Y'|\geq 2\e^{1/2}|Y|$. By definition of $\Theta$, $(X,Y,Z)\in \Sigma_{\reg}$, and consequently,
\begin{align}\label{al:dclose}
|d_H(X,Y,Z)-d_H(X',Y',Z)|\leq 2\e^{1/2}.
\end{align}
By (\ref{al:timeslem1}), and since $X,Y\subseteq V(G)$ while $Z\subseteq C$, we have 
\begin{align*}
d_G(X,Y)&=\frac{|\overline{E(G)}\cap (X\times Y)|}{|X||Y|}=\frac{|\overline{E(H)}\cap (X\times Y\times Z)|}{|X||Y||Z|}=d_H(X,Y,Z).
\end{align*}
 The same computation using $X',Y'$ in place of $X,Y$ shows $d_G(X',Y')=d_H(X',Y',Z)$.  Thus (\ref{al:dclose}) implies $|d_G(X,Y)-d_G(X',Y')|\leq 2\e^{1/2}$. This completes the proof.
\end{proof}

We can now prove a sufficient condition for $M_{\calH}$ to be bounded above and below by a tower.

\begin{theorem}\label{thm:fastest}
Suppose $\calH$ is a hereditary $3$-graph property and $\calH$ is far from finite SVC-dimension.  Then $\Tw(\Omega(\e^{-1}))\leq M_{\calH}(\e)\leq \Tw(6\e^{-4})$. 
\end{theorem}
\begin{proof}
Suppose $\calH$ is a hereditary $3$-graph property which is far from finite SVC-dimension. The upper bound for $M_{\calH}(\e)$ holds by Theorem \ref{thm:chung} (see also the remark following the statement of Theorem \ref{thm:chung}).   

For the lower bound, let $c$ be as in Corollary \ref{cor:bipgowers}, let $\e>0$ be sufficiently small, and let $N$ be sufficiently large.  Let $G_1=(U\cup W, E_1)$, $G_2=(U\cup W, E_2)$, and $G_3=(U\cup W, E_3)$ be from Corollary \ref{cor:bipgowers} for the parameter $48\e^{1/2}$ and satisfying $|U|=|W|=n\geq N$.  By Theorem \ref{thm:vdischom}, there is some $1\leq i\leq 3$ such that $(n\otimes G_i)\cap  {\bf B}_{\calH}\neq \emptyset$. Recalling ${\bf B}_{\calH}\subseteq \calH$ (by Fact \ref{fact:bhhgraphs}), we can thus fix some $H\in (n\otimes G_i)\cap \calH$. Let $t_{G_i}$ be the size of the smallest $48\e^{1/2}$-regular partition of $G_i$, and let $t_H$ be the size of the smallest $\e$-regular partition of $H$.  By our choice of $G_i$ from Corollary \ref{cor:bipgowers}, $t_{G_i}\geq \Tw(c\e^{-1}/(48)^2)$.   By Lemma \ref{lem:timeslem}, $t_{G_i}\leq t_H$. This shows
$$
M_{\calH}(\e)\geq t_H\geq t_{G_i} \geq  \Tw(c\e^{-1}/(48)^2)=\Tw(\Omega( \e^{-1})).
$$
\end{proof}

We can now characterize the jump to tower.

\begin{theorem}\label{thm:sl1}
For any hereditary $3$-graph property, $\calH$, one of the following holds.
\begin{enumerate}
\item $\calH$ is far from finite SVC-dimension, and 
$$
\Tw(\Omega(\e^{-1}))\leq M_{\calH}(\e)\leq \Tw(6\e^{-4}).
$$
\item $\calH$ is close to finite SVC-dimension, and for some $K>0$, 
$$
M_{\calH}(\e)\leq M_{\calH}^{\hom}(\e^4)\leq 2^{2^{\e^{-K}}}.
$$
\end{enumerate}
\end{theorem}
\begin{proof}
Part (1) follows immediately from Theorem \ref{thm:fastest}.  We now prove (2). Assume $\calH$ is close to finite SVC-dimension. By definition, there is some hereditary $3$-graph property $\calH'$ and an integer $k\geq 1$ such that $\calH$ is close to $\calH'$  and $\SVC(\calH')<k$. Let $C=C(k)$ be as in Theorem \ref{thm:slvc}. By Theorem \ref{thm:slvc}, $M^{\hom}_{\calH'}(\e)\leq 2^{2^{\e^{-C}}}$.  By Proposition \ref{prop:closehom}, we have $M_{\calH}^{\hom}(2\e)\leq M^{\hom}_{\calH'}(\e)$.  Combining these observations with Fact \ref{lem:homup3graphs} yields that for all sufficiently small $\e>0$,
$$
M_{\calH}( \e)\leq M_{\calH}^{\hom}(\e^4)\leq M^{\hom}_{\calH'}(\e^4/2)\leq 2^{2^{(\e^4/2)^{-C}}}.
$$
This implies that for some $K>0$, $M_{\calH}(\e)\leq M_{\calH}^{\hom}(\e^4)\leq 2^{2^{\e^{-K}}}$.
\end{proof}

\section{Lower Bound Lemma for $3$-graphs}\label{sec:blowuplemma}
In this section we prove a $3$-graph analogue of Lemma \ref{lem:blowup0}. This result, Lemma \ref{lem:12blowup},  will be the key ingredient in the lower bounds in the exponential, polynomial, and constant ranges of Theorem \ref{thm:weakhom}.  We will use the following $3$-graph analogue of the canonical equivalence relation from Definition \ref{def:sim}.

\begin{definition}\label{def:simhg}
Suppose $H=(V,E)$ is a $3$-graph.  Given $x,x'\in V$, define $x\sim_H x'$ if and only if 
$$
N_H(x)\cap {V\setminus \{x,x'\}\choose 2}=N_H(x')\cap {V\setminus \{x,x'\}\choose 2}. 
$$
\end{definition}

It is an exercise to show that $\sim_H$ forms an equivalence relation on the vertex set of $H$, all classes of which are cliques or anticliques. For some of our $3$-graph applications, we will require the following version of Definition \ref{def:simhg}, which is relativized to two distinguished subsets.

\begin{definition}\label{def:simhg2}
Suppose $H=(U,E)$ is a $3$-graph and $X,Y\subseteq U$ are nonempty subsets satisfying either $X=Y$ or $X\cap Y=\emptyset$. 

Given $x,x'\in X$, define $x\sim_H^{Y} x'$ if and only if $N_H(x)\cap {Y\setminus \{x,x'\}\choose 2}=N_H(x')\cap {Y\setminus \{x,x'\}\choose 2}$. 
\end{definition}

In other words, $x\sim_H^Yx'$ means $x$ and $x'$ ``look alike" with respect to pairs of vertices from $Y$.  The assumption that $X=Y$ or $X\cap Y=\emptyset$ in Definition \ref{def:simhg2} implies $\sim_H^{Y}$ is always an equivalence relation on the set $X$.  We refer to its equivalence classes as the \emph{$\sim_H^{Y}$ classes in $X$}.  

Note that when $X=Y=U$, the relation $\sim_H^{Y}$ from Definition \ref{def:simhg2} is simply the equivalence relation $\sim_H$ from Definition \ref{def:simhg}. Otherwise, $\sim_H^{Y}$ is a possibly new  equivalence relation on the set $X$.  

Lemma \ref{lem:12blowup} will use Definition \ref{def:simhg2} in its statement, along with the more detailed notion of blowup from part (3) of Definition \ref{def:blowupgraph}.  In particular, we recall that if  $H=(U, E)$ is a $3$-graph and $\mathbf{H}$ is a blowup of $H$, with  vertex set of the form  $V(\mathbf{H})=\bigsqcup_{u\in U}V_u$ as in Definition \ref{def:blowupgraph}(1), then, given integers $n_1,n_2\geq 1$ and sets $X,Y\subseteq U$, we say that ${\bf H}$ is an \emph{$(n_1,n_2;X,Y)$-blowup of $H$} if for each $x\in X$, $|V_x|=n_1$, and for each $y\in Y$, $|V_y|=n_2$.   We can now state Lemma \ref{lem:12blowup}.

\begin{lemma}[Lower Bound Lemma for $3$-graphs]\label{lem:12blowup}
Suppose $H=(U,E)$ is a $3$-graph,  $X,Y\subseteq U$ are nonempty sets satisfying either   $X=Y=U$ or $X\sqcup Y=U$, and $X_1,\ldots, X_t$ is an enumeration of the $\sim_H^{Y}$-classes in $X$.  Assume $n_1,n_2\in \mathbb{N}^{\geq 1}$ and $\e\in (0,1)$ are such that the following hold.  
\begin{enumerate} 
\item $|X|n_1=|Y|n_2$, and
\item $0<\e  <(\frac{1}{16|Y|})^{8}$.
\end{enumerate}
Suppose ${\bf H}$ is an $(n_1,n_2; X,Y)$-blowup of $H$ with vertex set $V({\bf H})=\bigsqcup_{u\in U}V_u$ as in Definition \ref{def:blowupgraph}. For each $1\leq i\leq t$, let ${\bf X}_i=\bigcup_{x\in X_i}V_x$, and set ${\bf X}:={\bf X}_1\cup \cdots \cup {\bf X}_t=\bigcup_{x\in X}V_x$.  For any $\e$-regular partition $\calP$ of ${\bf H}$, there is a set $\calP'\subseteq \calP$ such that the following hold.
\begin{enumerate}[(i)]
\item $|\bigcup_{P\in \calP'}P\cap {\bf X} |\geq (1-2\e^{1/8})|{\bf X}|$.
\item For all $P\in \calP'$, there is   $1\leq i\leq t$ such that $|P\cap {\bf X}_i|\geq (1-\e^{1/4})|P\cap {\bf X}|$.  
\end{enumerate}
\end{lemma}

We now briefly compare Lemma \ref{lem:12blowup} to the graph theoretic analogue used earlier in the paper, Lemma \ref{lem:blowup0}.   Lemma \ref{lem:12blowup} is most similar to Lemma \ref{lem:blowup0} in the case where $X=Y=U$. In this case, the set ${\bf X}$ is simply $V({\bf H})$, and the sets $X_i$ are simply the $\sim_H$-classes of $H$. After this translation, we see that the conclusions of Lemma \ref{lem:12blowup} are the expected $3$-graph analogues of what appears in Lemma \ref{lem:blowup0} in that case.  We require here the additional option of allowing $X$ and $Y$ to partition $U$ specifically to deal with the exponential lower bound in Section \ref{sec:polyexp}, a jump which does not exist in the graphs case. In that specific case, the set $X$ will be very large compared to $\e^{-1}$ (namely exponential in $\e^{-1}$). In that regime, it will be crucial the hypotheses of Lemma \ref{lem:12blowup} only require $\e$ to be small compared to $|Y|^{-1}$, and not  $|X|^{-1}$.

 Before proving Lemma \ref{lem:12blowup}, we require one preliminary. In particular, Lemma \ref{fact:int3graphs} below is the appropriate $3$-graph analogue  of Lemma \ref{lem:int0}, and will be used to identify the blowups of $\sim_H^Y$-classes appearing in conclusion (ii) of Lemma \ref{lem:12blowup}.  The reader may wish to review Notation \ref{not:01nbrs}, and observe that, in this notation, given a $3$-graph $H=(V,E)$ and $yz\in {V\choose 2}$, we have  the following partition of $V(H)$:
\begin{align}\label{not:hub}
V(H)=N_{H^1}(yz)\sqcup N_{H^0}(yz)\sqcup \{y,z\}.
\end{align}

\begin{lemma}\label{fact:int3graphs}
 Suppose $H=(U,E)$ is a $3$-graph, and $X,Y\subseteq U$ satisfy either $X=Y=U$ or $X\sqcup Y=U$. Let $X_1,\ldots, X_t$ be an enumeration of the $\sim^{Y}_H$-classes in $X$.  
 
For any $\alpha\colon {Y\choose 2}\rightarrow \{0,1\}$, the set $X\cap \bigcap_{yz\in {Y\choose 2}}(\{y,z\}\cup N_{H^{\alpha(yz)}}(yz))$ is contained in $X_i$ for some $1\leq i\leq t$.  
 \end{lemma}
 \begin{proof} 
Fix $\alpha\colon {Y\choose 2}\rightarrow \{0,1\}$. To ease notation, set 
$$
W=X\cap  \bigcap_{yz\in {Y\choose 2}}(\{y,z\}\cup N_{H^{\alpha(yz)}}(yz)).
$$
  Suppose towards a contradiction $W$ is not a subset of $X_i$ for any $1\leq i\leq t$.  Then there exist $x,x'\in W$  such that $x\nsim^Y_H x'$.  By definition of $\sim^Y_H$, there is $yz\in {Y\setminus\{x,x'\}\choose 2}$ and $\tau\in \{0,1\}$   such that $xyz\in E^{\tau}$ and $x'yz\in E^{1-\tau}$. Since $x,x'\in W$,  $\{x,x'\}\subseteq \{y,z\}\cup N_{H^{\alpha(yz)}}(yz)$. Since $y,z\notin \{x,x'\}$, this implies $\{x,x'\}\subseteq   N_{H^{\alpha(yz)}}(yz)$. However, this is impossible, since either $\alpha(yz)=\tau$ and $x'\notin N_{H^{\alpha(yz)}}(yz)$, or $\alpha(yz)=1-\tau$ and $x\notin N_{H^{\alpha(yz)}}(yz)$.  Thus we have arrived at a contradiction.   
\end{proof}

We now prove Lemma \ref{lem:12blowup}, following a similar strategy to the proof of Lemma \ref{lem:blowup0}.
\vspace{2mm}

\noindent{\bf Proof of Lemma \ref{lem:12blowup}.} In addition to setting ${\bf X}=\bigcup_{x\in X}V_x$, we set ${\bf Y}=\bigcup_{y\in Y}V_y$.  We begin with a few observations. Since either $X=Y=U$ or $X\sqcup Y=U$ hold, we know that either ${\bf X}={\bf Y}=V({\bf H})$  or ${\bf X}\sqcup {\bf Y}=V({\bf H})$.  Consequently, if we let $\calR$ be the partition of $V({\bf H})$ generated by the sets ${\bf X}$ and ${\bf Y}$, then either $\calR=\{{\bf X},{\bf Y}\}$ or $\calR=\{{\bf X}\}=\{{\bf Y}\}$. In particular, $\calR$ has size at most $2$.  

By definition of ${\bf H}$ and assumption (1), we have $|{\bf X}|=|X|n_1=|Y|n_2=|{\bf Y}|$, and thus
\begin{align}\label{al:hub}
|V({\bf H})|\leq |X|n_1+|Y|n_2=|{\bf X}|+|{\bf Y}|=2|{\bf X}|=2|{\bf Y}|.
\end{align} 
Define
\begin{align*}
\calQ=\{P\cap {\bf X}: P\in \calP\text{ and }P\cap {\bf X}\neq \emptyset\}\cup \{P\cap {\bf Y}: P\in \calP\text{ and }P\cap {\bf Y}\neq \emptyset\}.
\end{align*}
Observe $\calQ$  is the common refinement of $\calP$ with $\calR$, so by Lemma \ref{lem:slicingcor}, if we set 
\begin{align*}
\Sigma_{\reg}&=\{(X,Y,Z)\in \calQ^3: (X,Y,Z)\text{ is $2\e^{1/2}$-regular with respect to ${\bf H}$}\},
\end{align*}
then $|\bigcup_{(X,Y,Z)\in \Sigma_{\reg}}X\times Y\times Z|\geq (1-7\e^{1/2})|V({\bf H})|^3$. To ease notation, let 
$$
\calE_{\reg}=\bigcup_{(X,Y,Z)\in \Sigma_{\reg}}X\times Y\times Z.
$$
Note that by above, $|\calE_{\reg}|\geq (1-7\e^{1/2})|V({\bf H})|^3$. Now set
$$
\calE_{\reg}'=\{(x,y)\in V({\bf H})^2: |N_{\calE_{\reg}}(x,y)|\geq (1-7\e^{1/4})|V({\bf H})|\}.
$$
 By Lemma \ref{lem:averaging}, and since $|\calE_{\reg}|\geq (1-7\e^{1/2})|V({\bf H})|^3$, we have  $|\calE_{\reg}'|\geq (1-\e^{1/4})|V({\bf H})|^2$.  Observe that by its definition,  $\calE_{\reg}'$ is a union of sets of the form $X\times Y$ with $X,Y\in \calQ$. Now define  
 $$
\calE_{\reg}''=\{x\in V({\bf H}): |N_{\calE_{\reg}'}(x)|\geq (1-\e^{1/8})|V({\bf H})|\}.
$$
  By Lemma \ref{lem:averaging}, and since $|\calE_{\reg}'|\geq (1-\e^{1/4})|V({\bf H})|^2$, we have $|\calE_{\reg}''|\geq (1-\e^{1/8})|V({\bf H})|$. Observe that by its definition, $\calE_{\reg}''$ is a union of elements from $\calQ$. Define
\begin{align*}
{\bf X}''=\calE_{\reg}''\cap {\bf X}.
\end{align*}
By our lower bound on $|\calE''_{\reg}|$, we have  
\begin{align}\label{al:agood}
|{\bf X}''|\geq |{\bf X}|-|V({\bf H})\setminus \calE_{\reg}''|\geq |{\bf X}|-\e^{1/8}|V({\bf H})|\geq |{\bf X}|(1-2\e^{1/8}),
\end{align}
where the last inequality is because $|V({\bf H})|\leq 2|{\bf X}|$  by (\ref{al:hub}). Since every set in $\calQ$ is either contained in ${\bf X}$ or disjoint from ${\bf X}$, and since $\calE_{\reg}''$ is a union of sets from $\calQ$, we have that ${\bf X}''$ is a union of elements from $\calQ$.   Let $\calX\subseteq \calQ$ be such that ${\bf X}''=\bigcup_{Q\in \calX}Q$.  By construction of $\calQ$, there is a set $\calP'\subseteq \calP$ so that $\calX=\{P\cap {\bf X}: P\in \calP'\}$, and thus
$$
\bigcup_{P\in \calP'}P\cap {\bf X}=\bigcup_{Q\in \calX} Q={\bf X}''.
$$
We show this $\calP'$ satisfies the desired conclusions of Lemma \ref{lem:12blowup}.  By definition of $\calP'$ and (\ref{al:agood}), we already know that $|\bigcup_{P\in \calP'}P\cap {\bf X}|=|{\bf X}''|\geq (1-2\e^{1/8})|{\bf X}|$, so (i) holds.  The rest of the proof is devoted to showing (ii).  To this end, we fix $Q\in \calX$ for the rest of the proof. Our goal is to show there exists an index $1\leq i\leq t$ such  that $|Q\cap {\bf X}_i|\geq (1-\e^{1/4})|Q|$.  

Recalling the notation appearing in (\ref{not:hub}), we observe that every $yz\in {Y\choose 2}$ gives rise to a natural partition of $X$:
$$
X=\left( N_{H^1}(yz)\cap X\right)\cup \left(N_{H^0}(yz)\cap X\right)\cup \left(\{y,z\}\cap X\right).
$$
This induces a corresponding partition of ${\bf X}$ as follows.
\begin{align}\label{al:partitionob}
{\bf X}=\left(\bigcup_{x\in X\cap N_{H^1}(yz)}V_x\right)\bigsqcup\left(\bigcup_{x\in X\cap N_{H^0}(yz)}V_x\right)\bigsqcup\left((V_y\cup V_z)\cap {\bf X}\right).
\end{align}
We next show that $Q$ cannot substantially intersect two specific pieces of such a partition of ${\bf X}$.    

\begin{claim}\label{cl:qgood}
For all $yz\in {Y\choose 2}$,    
\begin{align}\label{al:uy3}
\min\left\{\left|Q\cap \left(\bigcup_{x\in X\cap N_{H^{0}}(yz)}V_x \right)\right|,\left|Q\cap \left(\bigcup_{x\in X\cap N_{H^{1}}(yz)}V_x \right)\right|\right\}<2\e^{1/2}|Q|.
\end{align}
\end{claim}
\begin{proof}
Fix $yz\in {Y\choose 2}$, and assume towards a contradiction that (\ref{al:uy3}) fails for $yz$. We will use below that $V_y\cup V_z\subseteq {\bf Y}$.  Since $Q\in \calX$,  we have $Q\subseteq \calE_{\reg}''$, which implies that for all $v\in Q$, $|V({\bf H})\setminus N_{\calE'_{\reg}}(v)|\leq \e^{1/8}|V({\bf H})|$.  We also know by definition of $\calE'_{\reg}$ that for all $v,v'\in Q$, $N_{\calE'_{\reg}}(v)=N_{\calE'_{\reg}}(v')$.  Combining these observations yields the following.  
\begin{align}
\nonumber\left|V_y\cap \left(\bigcup_{v\in Q}N_{\calE'_{\reg}}(v)\right)\cap{\bf Y}\right|=\left|V_y\cap \left(\bigcup_{v\in Q}N_{\calE'_{\reg}}(v)\right)\right|&\geq |V_y|-\left|V({\bf H})\setminus \left(\bigcup_{v\in Q}N_{\calE'_{\reg}}(v)\right)\right|\\
&\nonumber\geq |V_y|-\e^{1/8}|V({\bf H})|\\
&\nonumber=|{\bf Y}||Y|^{-1}-\e^{1/8}|V({\bf H})|\\
&\nonumber\geq |{\bf Y}|(|Y|^{-1} -2\e^{1/8})\\
&\nonumber\geq 2\e^{1/2}|{\bf Y}|\\
&\label{al:b1}\geq 2\e^{1/2}\left|\left(\bigcup_{v\in Q}N_{\calE'_{\reg}}(v)\right)\cap {\bf Y}\right|,
\end{align}
where the second equality is because $|{\bf Y}|=|Y|n_2=|Y||V_y|$,  third inequality is since $|V({\bf H})|\leq 2|{\bf Y}|$  by (\ref{al:hub}), and the fourth inequality is because $\e<(16|Y|)^{-8}$.  Since every set in $\calQ$ is either contained in ${\bf Y}$ or disjoint from ${\bf Y}$, and since $\bigcup_{v\in Q}N_{\calE'_{\reg}}(v)$ is a union of sets from $\calQ$, $(\bigcup_{v\in Q}N_{\calE'_{\reg}}(v))\cap {\bf Y}$ is also a union of sets from $\calQ$. Therefore, (\ref{al:b1}) implies there must be some $Q' \in \calQ$ such that $Q'\subseteq  (\bigcup_{v\in Q}N_{\calE'_{\reg}}(v))\cap {\bf Y}$, and such that 
$$
|Q' \cap V_y|\geq 2\e^{1/2}|Q'|.
$$
Note we have $Q\times Q'\subseteq \calE'_{\reg}$. By definition of $\calE_{\reg}'$, we have that for all $(v,v')\in Q\times Q'$, $|V({\bf H})\setminus N_{\calE_{\reg}}(v,v')|\leq  7\e^{1/4}|V({\bf H})|$.  Further, by definition of $\calE_{\reg}$, we know that for all $(v,v'), (v'',v''')\in Q\times Q'$, $N_{\calE_{\reg}}(v,v')=N_{\calE_{\reg}}(v'',v''')$.  Combining these observations yields 
\begin{align}
 \nonumber\left|V_z\cap \left(\bigcup_{(v,v')\in Q\times Q'}N_{\calE_{\reg}}(v,v')\right)\cap{\bf Y}\right|&=\left|V_z\cap \left(\bigcup_{(v,v')\in Q\times Q'}N_{\calE_{\reg}}(v,v')\right)\right|\\
 &\nonumber\geq |V_z|-\left| V({\bf H})\setminus \left(\bigcup_{(v,v')\in Q\times Q'}N_{\calE_{\reg}}(v,v')\right)\right|\\
&\nonumber\geq |V_z|-7\e^{1/4}|V({\bf H})|\\
&\nonumber=|{\bf Y}||Y|^{-1}-7\e^{1/4}|V({\bf H})|\\
&\nonumber\geq |{\bf Y}|(|Y|^{-1}-14\e^{1/4})\\
&\nonumber\geq 2\e^{1/2}|{\bf Y}|\\
&\label{al:c1}\geq 2\e^{1/2}\left|\left(\bigcup_{(v,v')\in Q\times Q'}N_{\calE_{\reg}}(v,v')\right)\cap{\bf Y}\right|,
\end{align}
where the second equality uses that $|{\bf Y}|=|Y|n_2=|Y||V_z|$, the third inequality uses that $|V({\bf H})|\leq 2|{\bf Y}|$ holds by (\ref{al:hub}), and the fourth inequality uses $\e<(16|Y|)^{-8}$. Since every set in $\calQ$ is either contained in ${\bf Y}$ or disjoint from ${\bf Y}$, and since $\bigcup_{(v,v')\in Q\times Q'}N_{\calE_{\reg}}(v,v')$ is a union of sets from $\calQ$, we have that $ (\bigcup_{(v,v')\in Q\times Q'}N_{\calE_{\reg}}(v,v'))\cap {\bf Y}$ is also a union of sets from $\calQ$. Therefore, (\ref{al:c1}) implies there is some $Q''\in \calQ$ such that $Q''\subseteq(\bigcup_{(v,v')\in Q\times Q'}N_{\calE_{\reg}}(v,v'))\cap {\bf Y}$, and such that  
$$
|Q''\cap V_z|\geq 2\e^{1/2}|Q''|. 
$$
By construction, we have $Q\times Q'\times Q''\subseteq \calE_{\reg}$, i.e. $(Q,Q',Q'')\in \Sigma_{\reg}$.  Combining this with $|Q'\cap V_y|\geq 2\e^{1/2}|Q'|$, $|Q''\cap V_z|\geq 2\e^{1/2}|Q''|$, and our assumption that (\ref{al:uy3}) fails for $yz$, we have 
\begin{align*} 
\max\Bigg\{&\left|d_{{\bf H}}\left(Q\cap \left(\bigcup_{x\in  X\cap N_{H^1}(yz)}V_x \right), Q'\cap V_y, Q''\cap V_z\right)-d_{{\bf H}}\left(Q,Q',Q''\right)\right|, \\
&\nonumber\left|d_{{\bf H}}\left(Q\cap \left(\bigcup_{x\in X\cap N_{H^0}(yz)}V_x \right), Q'\cap V_y, Q''\cap V_z\right)-d_{{\bf H}}\left(Q,Q',Q''\right)\right|\Bigg\}\leq 2\e^{1/2}.
\end{align*}
By the triangle inequality, this yields
\begin{align*} 
 & \Bigg|d_{{\bf H}}\left(Q\cap \left(\bigcup_{x\in  X\cap N_{H^1}(yz)}V_x \right), Q'\cap V_y, Q''\cap V_z\right)\\
&-d_{{\bf H}}\left(Q\cap \left(\bigcup_{x\in X\cap N_{H^0}(yz)}V_x \right), Q'\cap V_y, Q''\cap V_z\right)\Bigg|\leq 4\e^{1/2}<1,
\end{align*}
where the final inequality uses $\e<(16|Y|)^{-8}$ and $|Y|\geq 1$. However, this is impossible, since our assumption that ${\bf H}$ is a blowup of $H$ implies
$$
d_{{\bf H}}\left(Q\cap \left(\bigcup_{x\in X\cap N_{H^1}(yz)}V_x \right), Q'\cap V_y, Q''\cap V_z\right)=1,
$$
while 
$$
d_{{\bf H}}\left(Q\cap \left(\bigcup_{x\in X\cap N_{H^0}(yz)}V_x \right), Q'\cap V_y, Q''\cap V_z\right)=0.
$$
This finishes the proof of Claim \ref{cl:qgood}.
\end{proof}

 Claim \ref{cl:qgood} implies that for each $yz\in {Y\choose 2}$, there is $\beta(yz)\in \{0,1\}$  such that 
\begin{align}\label{al:partitionob3}
\left| Q\cap \left(\bigcup_{x\in  X\cap N_{H^{\beta(yz)}}(yz)}V_x \right) \right|<2\e^{1/2}|Q|.
\end{align}
Let $\alpha(yz)$ be such that $\{\alpha(yz),\beta(yz)\}=\{0,1\}$.  Note that by (\ref{al:partitionob}), each $yz\in {Y\choose 2}$ induces a partition  of $Q$ given by
\begin{align}\label{al:partitionob2}
Q=\left( Q\cap  \bigcup_{x\in X\cap N_{H^{\alpha(yz)}}(yz) }V_x\right)\bigsqcup \left( Q\cap  \bigcup_{x\in X\cap N_{H^{\beta(yz)}}(yz) }V_x\right)\bigsqcup \left( Q\cap \bigcup_{x\in X\cap \{y,z\}}V_x\right).
\end{align} 
The values $\alpha(yz)$ for $yz\in {Y\choose 2}$ naturally give  rise to the following subset of $X$.
$$
W=\bigcap_{yz\in {Y\choose 2}}\left(\left(\{y,z\}\cup N_{H^{\alpha(yz)}}(yz)\right)\cap X\right).
$$
By Lemma \ref{fact:int3graphs},  there is $1\leq i\leq t$ such that $W\subseteq X_i$, and consequently, $\bigcup_{x\in W}V_x\subseteq {\bf X}_i$. Combining this with (\ref{al:partitionob3}) and (\ref{al:partitionob2}), we have 
\begin{align}\label{al:12blowupa}
\nonumber|Q\cap {\bf X}_i|\geq \left|Q\cap \left(\bigcup_{x\in W}V_x\right)\right|&=\left|Q\cap\left( \bigcap_{yz\in {Y\choose 2}}\left(\bigcup_{x\in X\cap (\{y,z\}\cup N_{H^{\alpha(yz)}}(yz))}V_x\right)\right)\right|\\
&\nonumber\geq |Q|-\sum_{yz\in {Y\choose 2}} \left|Q\cap \left( \bigcup_{x\in X\cap N_{H^{\beta(yz)}}(yz)}V_x\right)\right|\\
&\nonumber\geq |Q|-|Y|^22 \e^{1/2} |Q|\\
&\geq (1-\e^{1/4})|Q|,
\end{align}
where the last inequality is because $\e<(16|Y|)^{-8}$.  This completes our proof of  Lemma \ref{lem:12blowup}.
\qed

\vspace{2mm}

We will use Lemma \ref{lem:12blowup} to produce the lower bounds in the constant, polynomial, and exponential ranges of Theorem \ref{thm:weak}.  The exact lower bounds we are able to obtain in the polynomial and exponential ranges are limited by the relationship between $\e$ and $|Y|$ appearing in assumption (2) of Lemma \ref{lem:12blowup}.  We have been unable to improve this relationship beyond a power of $8$.  Finding a way to avoid using Lemma \ref{lem:slicingcor} would improve the power to $4$.  If that were possible, then performing the first application of Lemma \ref{lem:averaging} more carefully, as is done in the graphs version (Lemma \ref{lem:blowup0}), could further improve the power to $2-o(1)$.  However, our current proof strategy could not do better than this due to the second application of Lemma \ref{lem:averaging}.  

We end this section by proving an easy corollary of Lemma \ref{lem:12blowup} which applies when the sets $X_1,\ldots, X_t$ appearing in its statement all have size $1$.

\begin{corollary}\label{cor:12blowup}
Let $H=(U,E)$ be a $3$-graph, and let $X,Y\subseteq U$ be nonempty sets satisfying either $X=Y=U$ or $X\sqcup Y=U$. Suppose every $\sim_H^Y$-class in $X$ has size $1$. Assume $n_1,n_2\in \mathbb{N}^{\geq 1}$ and $\e\in (0,1)$ are such that 
\begin{enumerate} 
\item $|X|n_1=|Y|n_2$, and
\item $0<\e  <(\frac{1}{16|Y|})^8$.
\end{enumerate}
If  ${\bf H}$ is an $(n_1,n_2; X,Y)$-blowup of $H$, then any $\e$-regular partition $\calP$ of ${\bf H}$ satisfies
$$
|\calP|\geq (1-2\e^{1/8})(1-\e^{1/4})|X|.
$$ 
\end{corollary}
\begin{proof}
By assumption, the $\sim_H^Y$-classes in $X$ are exactly the $1$-element subsets of $X$.  Enumerate these as $X_1,\ldots, X_t$.  Let $V({\bf H})=\bigsqcup_{x\in V(H)}V_x$ as in Definition \ref{def:blowupgraph}, where for each $x\in X$, $|V_x|=n_1$ and for each $y\in Y$, $|V_y|=n_2$.  Let ${\bf X}=\bigcup_{x\in X}V_x$, and for each $1\leq i\leq t$, let ${\bf X}_i=\bigcup_{x\in X_i}V_x$.  Note $|{\bf X}|=|X|n_1$, and each ${\bf X}_i$ has size $n_1$ (since each $X_i$ has size $1$). By Lemma \ref{lem:12blowup}, there is a set $\calP'\subseteq \calP$ such that 
\begin{align}\label{al:cor1}
\left|\bigcup_{P\in \calP'}P\cap {\bf X}\right|\geq (1-2\e^{1/8})|{\bf X}|,
\end{align}
and such that for all $P\in \calP'$, there is   $1\leq g(P)\leq t$ satisfying $|P\cap {\bf X}_{g(P)}|\geq (1-\e^{1/4})|P\cap {\bf X}|$.  Rearranging, we have that
\begin{align}\label{al:cor2}
\text{ for all $P\in \calP'$, $|P\cap {\bf X}|\leq \frac{|P\cap {\bf X}_{g(P)}|}{1-\e^{1/4}}\leq \frac{| {\bf X}_{g(P)}|}{1-\e^{1/4}}=\frac{n_1}{1-\e^{1/4}}$},
\end{align}
where the equality uses that each of ${\bf X}_1,\ldots,{\bf X}_t$ has size $n_1$. Combining (\ref{al:cor1}) and (\ref{al:cor2}) yields
$$
|\calP|\geq \frac{|\bigcup_{P\in \calP'}P\cap {\bf X}|}{\max_{P\in \calP'}|P\cap {\bf X}|}\geq \frac{(1-2\e^{1/8})(1-\e^{1/4})|{\bf X}|}{n_1}=(1-2\e^{1/8})(1-\e^{1/4})|X|,
$$
where the last equality uses that $|{\bf X}|=|X|n_1$.
\end{proof}

\section{Jump from Polynomial to Exponential}\label{sec:polyexp}
In this section we prove the existence of a jump from polynomial to exponential  growth.  This jump will be characterized by whether or not a hereditary $3$-graph property $\calH$ is  close to finite VC-dimension.   In particular, we will show that if $\calH$ is close to finite VC-dimension, then $M_{\calH}$ is bounded above by a polynomial, and if $\calH$ is far from finite VC-dimension, then $M_{\calH}$ is bounded below by a single exponential.  
 
 In light of Theorem \ref{thm:sl1}, it will suffice to prove the existence of the jump between polynomial and exponential growth rates for hereditary $3$-graph properties $\calH$ which are close to finite SVC-dimension (as all other $\calH$ have tower type growth). 

\begin{theorem}\label{thm:exp}
 Suppose $\calH$ is a hereditary $3$-graph property which is close to finite SVC-dimension. Then one of the following holds. 
\begin{enumerate}
\item $\calH$ is close to finite VC-dimension. In this case there exists $C>0$ such that 
$$
M_{\calH}(\e)\leq M_{\calH}^{\hom}(\e^4)\leq \e^{-C}.
$$
\item $\calH$ is far from finite VC-dimension. In this case, there exists $C>0$ such that 
$$
2^{\Omega(\e^{-1/8})}\leq M_{\calH}(\e)\leq M_{\calH}^{\hom}(\e^4)\leq 2^{2^{\e^{-C}}}.
$$ 
\end{enumerate}
\end{theorem}
\begin{proof}
Fix  a hereditary $3$-graph property $\calH$ which is close to finite SVC-dimension.  

Suppose first $\calH$ is close to finite VC-dimension. Then there exists a hereditary $3$-graph property $\calH'$ with $\VC(\calH')<\infty$, such that $\calH$ is close to $\calH'$.  Let $k=\VC(\calH')$.  By Theorem \ref{thm:foxpachsuk}, there exists $D=D(k)$ such that $M_{\calH'}^{\hom}(\e)\leq \e^{-D}$ holds for all sufficiently small $\e$.  Combining this with Fact \ref{lem:homup3graphs}  and Proposition \ref{prop:closehom}, we have
\begin{align*}
M_{\calH}(\e)\leq M_{\calH}^{\hom}(\e^4)\leq M_{\calH'}^{\hom}(\e^4/2)\leq (\e^4/2)^{-D}.
\end{align*}  
Clearly this implies there exists $C=C(k)>0$ such that $M_{\calH}(\e)\leq M_{\calH}^{\hom}(\e^4)\leq \e^{-C}$.  This completes the proof of (1).

Suppose now $\calH$ is far from finite VC-dimension.   The existence of $C >0$ such that  
$$
M_{\calH}(\e)\leq M_{\calH}^{\hom}(\e^4)\leq 2^{2^{\e^{-C }}}
$$
follows from our assumption that $\calH$ is close to finite SVC-dimension and  Theorem \ref{thm:sl1}(2).  We have left to prove an exponential lower bound on $M_{\calH}$.  Let $\e$ be sufficiently small, and set $K=\lfloor (33 \e^{1/8})^{-1}\rfloor$.

Since $\calH$ is far from finite VC-dimension, Proposition \ref{prop:equiv2} implies the existence of some $H\in \widehat{\PS(K)}\cap{\bf B}_{ \calH }$.  By Definition \ref{def:ukhat}, we may assume $H$ has vertex set of the form
$$
V(H)=X\sqcup Y,
$$
 where $X=\{c_S: S\subseteq [K]\}$ and $Y=\{a_i,b_i: i\in [K]\}$, and edge set $E(H)$ satisfying  
 \begin{align}\label{al:exp1}
 \text{$\{a_ib_ic_S: i\in S\}\subseteq E(H)$ and $\{a_ib_ic_S: i\notin S\}\cap E(H)=\emptyset$.  }
 \end{align}
Note $|Y|\leq 2K$ holds by definition, and (\ref{al:exp1}) implies $|X|=2^K$.   Let $n_1$ be sufficiently large and divisible by $|Y|$, and set $n_2=|Y|^{-1}|X|n_1$.  Let $X_1,\ldots, X_t$ be an enumeration of the $\sim_H^Y$-equivalence classes in $X$ (see Definition \ref{def:simhg2}).  It is an exercise to check that (\ref{al:exp1}) implies $|X_i|=1$ for all $1\leq i\leq t$.

Since $H\in {\bf B}_{ \calH }$,  $\calH$ contains an $m$-blowup of $H$ for all $m\geq 1$.     Since $\calH$ is hereditary and $X$ and $Y$ are disjoint, this implies that there is an $(n_1,n_2; X,Y)$-blowup ${\bf H}$ of $H$ with ${\bf H}\in \calH$ (see Definition \ref{def:blowupgraph}(3)).   

We check the hypotheses of Corollary \ref{cor:12blowup} are satisfied by ${\bf H}$ and $H$, with respect to the sets $X$ and $Y$, and the parameter $\e$.  First, $X\cup Y=V(H)$ is a partition by construction.  Note
$$
|Y|\leq 2K=2\lfloor (33 \e^{1/8})^{-1}\rfloor\leq 2(33 \e^{1/8})^{-1}<\frac{1}{16\e^{1/8}}.
$$
Consequently, $\e<\Big(\frac{1}{16 |Y|}\Big)^8$.  By our choices for $n_1$ and $n_2$, we have 
$$
|Y|n_2=|Y|(|Y|^{-1}|X|n_1)=|X|n_1. 
$$
Finally, we recall (\ref{al:exp1}) implies each $X_i$ has size $1$.  This concludes our verification that the hypotheses of Corollary \ref{cor:12blowup} are satisfied.  Corollary \ref{cor:12blowup} then implies any $\e$-regular partition of ${\bf H}$ satisfies 
\begin{align*}
|\calP|\geq (1-2\e^{1/8})(1-\e^{1/4})|X|=(1-2\e^{1/8})(1-\e^{1/4})2^K\geq 2^{ \e^{-1/8}/34 },
\end{align*}
where the last inequality uses that $K=\lfloor (33\e^{1/8})^{-1}\rfloor$ and that $\e$ is sufficiently small.  The argument above shows that $M_{ \calH }(\e)\geq 2^{\Omega(\e^{-1/8})}$, which completes our proof of (2).  
\end{proof}

We end this section by showing the general form of lower bound in Theorem \ref{thm:exp} cannot be improved beyond a single exponential.\footnote{The fact that this lower bound cannot be improved beyond a single exponential can  now be easily deduced from the main theorem of \cite{GSW}.  We have chosen to keep Theorem \ref{thm:exp} in this paper as it was an important part of the narrative before \cite{GSW} appeared.}
 
\begin{theorem}
There is a hereditary $3$-graph property $\calH$ with $\SVC(\calH)<\infty$, so that $\calH$ is far from  finite VC-dimension, and so that for some $K,K'>0$.
$$
2^{\e^{-K}}\leq M_{\calH}(\e)\leq 2^{\e^{-K'}}.
 $$
\end{theorem}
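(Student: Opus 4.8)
The plan is to take $\calH$ to be the hereditary closure --- under induced sub-$3$-graphs and isomorphism --- of the family of all simple $n$-blowups of $\widehat{U(k)}$, over all $n,k\geq 1$ (Definitions \ref{def:ghat}, \ref{def:uk}, \ref{def:3graphblowup}). Unwinding this, $\calH$ is exactly the class of \emph{generalized blowups} of the various $\widehat{U(k)}$: those $3$-graphs whose vertex set splits into classes $\{A_i\}_{i\in[k]}$, $\{C_i\}_{i\in[k]}$, $\{B_S\}_{S\subseteq[k]}$ of arbitrary (possibly empty) sizes, with edge set $\bigcup_{i\in S}K_3[A_i,C_i,B_S]$ (an induced sub-$3$-graph of a simple $n$-blowup is such an object, the class of such objects is hereditary, and it contains all simple $n$-blowups). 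Two facts follow at once. First, $\calH$ contains a simple $n$-blowup of $\widehat{U(k)}$ for every $n,k$, so by Proposition \ref{prop:equiv2} it is far from finite VC-dimension. Second, in a generalized blowup $H$ the slice graph $H_x$ at any vertex $x$ is, after deleting isolated vertices, a complete bipartite graph (if $x\in A_i\cup C_i$) or a disjoint union of complete bipartite graphs (if $x\in B_S$), and all such graphs have VC-dimension at most $1$; hence $\SVC(\calH)\leq 1<\infty$. Taking $k_0=\SVC(\calH)+1$ and using the identity $\SVC(\calH)=\max\{k:1\otimes U(k)\in\trip(\calH)\}$ together with closure of $\trip(\calH)$ under induced sub-$3$-graphs yields $k_0\otimes U(k_0)\notin\trip(\calH)$, so $\calH$ meets the hypotheses of Theorem \ref{thm:exp}; this already gives the lower bound $M_{\calH}(\e)\geq 2^{\e^{-C}}$ for some $C>0$.

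For the upper bound $M_{\calH}(\e)\leq 2^{\e^{-C'}}$ I would construct an explicit $\e$-homogeneous partition of size $2^{\e^{-O(1)}}$ for every generalized blowup $H$; since $\e$-homogeneous triples are $\e^{1/4}$-regular (Proposition \ref{prop:2.21}), replacing $\e$ by a fixed power of $\e$ then converts this into an $\e$-regular partition of the same order. Write $n=|V(H)|$, set $\theta=\e^{3}$, and call a class of $H$ \emph{big} if it has size at least $\theta n$; there are at most $\theta^{-1}$ big classes among the $A_i$, at most $\theta^{-1}$ among the $C_i$, and at most $\theta^{-1}$ among the $B_S$. Let $I^{*}=\{i\in[k]: A_i\text{ and }C_i\text{ are both big}\}$, so $|I^{*}|\leq\theta^{-1}$; let $A'$, $C'$, $B'$ denote the unions of the small $A_i$, small $C_i$, small $B_S$ respectively; and for each $\tau\colon I^{*}\to\{0,1\}$ put $B'_\tau=\{b\in B': S(b)\cap I^{*}=\tau^{-1}(1)\}$, where $S(b)$ is the index of the class of $b$. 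The partition $\calP$ consists of the big $A_i$'s, the big $C_i$'s, the big $B_S$'s, the sets $A'$ and $C'$, and the sets $B'_\tau$; it has at most $3\theta^{-1}+2+2^{\theta^{-1}}\leq 2^{\e^{-O(1)}}$ parts.

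To see $\calP$ is $\e$-homogeneous, observe that every part lies within one of the three sides $A=\bigcup_iA_i$, $C=\bigcup_iC_i$, $B=\bigcup_SB_S$, so a triple of parts can carry an edge only if it has one part on each side, say $(X,Y,Z)$ with $X\subseteq A$, $Y\subseteq C$, $Z\subseteq B$. If $X=A_i$, $Y=C_j$ are big and $Z$ is a big $B_S$, then $d_H(X,Y,Z)\in\{0,1\}$. If $X=A_i$, $Y=C_j$ are big and $Z=B'_\tau$, then $d_H(X,Y,Z)=0$ for $i\neq j$, while for $i=j$ (so $i\in I^{*}$) every $b\in B'_\tau$ agrees on whether $i\in S(b)$, so $d_H(X,Y,Z)=\tau(i)\in\{0,1\}$; in all these cases the triple is $0$-homogeneous. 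The only other triples have $X=A'$ or $Y=C'$. For those we count: any edge with a vertex in $A'$ has that vertex in some small $A_i$ and the other two vertices in $C_i$ and in some $B_S$ with $i\in S$, so the number of edges meeting $A'$ is at most $\big(\sum_{i:\,|A_i|<\theta n}|A_i|\,|C_i|\big)n\leq\theta n\cdot n\cdot n=\theta n^{3}$, and similarly for $C'$. Hence a triple meeting $A'$ or $C'$ with density exceeding $\e$ uses at least $\e|X||Y||Z|$ of these $\leq 2\theta n^{3}$ edges, so the total volume of such triples is at most $2\theta\e^{-1}n^{3}=2\e^{2}n^{3}\leq\e n^{3}$. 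Therefore at least $(1-\e)n^{3}$ triples of $V(H)^{3}$ lie in an $\e$-homogeneous, hence $\e^{1/4}$-regular, triple of $\calP$, which completes the upper bound and, with the first paragraph, the theorem.

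The main obstacle --- and exactly why $M_{\calH}$ is single-exponential rather than polynomial --- is the family of triples $(A_i,C_i,B')$ with $A_i$, $C_i$ big: there is no control on how the small classes $B_S$ spread their mass across the $2^{|I^{*}|}$ possible membership patterns on the $\leq\theta^{-1}$ big coordinates, so one is forced to record, for every vertex of $B'$, its trace on $I^{*}$, which is precisely what produces the $\approx 2^{\theta^{-1}}$ parts $B'_\tau$; that this blow-up in part count is unavoidable is the content of Proposition \ref{prop:ukblowup} (fed through Theorem \ref{thm:exp}). The remaining work is the routine but slightly delicate bookkeeping of the small-mass contributions (the junk classes $A',C'$, and any tiny $B'_\tau$) so that every non-homogeneous triple is either absorbed into the $\e n^{3}$ error or has density below $\e$, without the number of parts or the error term getting out of hand.
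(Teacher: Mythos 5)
Your choice of $\calH$, the identification of its elements as generalized blowups of the $\widehat{U(k)}$, the verification that $\SVC(\calH)\leq 1$, and the derivation of the lower bound from Proposition \ref{prop:equiv2} together with Theorem \ref{thm:exp} all match the paper. Where you diverge is in the partition used for the upper bound, and you do the dual of what the paper does: the paper keeps each big $W_S$ as a singleton part and merges the big $U_a$'s (and, separately, the big $V_a$'s) according to the atom of the Boolean algebra on $[k]$ generated by the $\leq\e^{-4}$ big $S$'s, obtaining the exponential from the $\leq 2^{|\calJ|}$ atoms; you keep every big $A_i$ and every big $C_i$ as its own part and merge the \emph{small} $B_S$'s by their trace on the $\leq\theta^{-1}$ indices of $I^*$, obtaining the exponential from the $\leq 2^{|I^*|}$ traces. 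Your version is cleaner on one key point: keeping the big $A_i$ and $C_i$ separate makes the crossing triple $(A_i,C_i,B'_\tau)$ have density exactly $\tau(i)\in\{0,1\}$, whereas the paper's merged blocks $I^U_{S_j}$, $I^V_{S_j}$ pick up a diagonal term --- if an atom $S_j$ contains two indices $a\neq a'$ with $U_a,U_{a'},V_a,V_{a'}$ all big, then for $S\supseteq S_j$ the density $d_H(I^U_{S_j},I^V_{S_j},W_S)$ equals $\sum_{a\in S_j}|U_a||V_a|$ divided by $\bigl(\sum_{a\in S_j}|U_a|\bigr)\bigl(\sum_{a\in S_j}|V_a|\bigr)$, which can be $1/2$, so the paper's claim that this density is $1$ needs the $U$- and $V$-sides refined further (your partition in effect supplies that refinement). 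The only loose end in your own write-up is a constant: passing from the unordered bound $\leq 2\theta n^3$ on edges through $A'\cup C'$ to ordered-triple volume costs a factor of about $6$, so with $\theta=\e^3$ you get $12\e^2 n^3$ rather than $2\e^2 n^3$; taking $\theta=\e^4$ absorbs this, which is precisely the bookkeeping your final paragraph anticipates.
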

\begin{proof}
For all $k,n\geq 1$, let $H(k,n)$ be a $3$-partite $3$-graph with vertex set 
$$
X_1\sqcup \cdots \sqcup X_k\sqcup Y_1\sqcup \cdots \sqcup  Y_k\sqcup \bigsqcup_{S\subseteq [k]}Z_S,
$$
 where for each $i\in [k]$ and $S\subseteq[k]$, $|X_i|=|Y_i|=|Z_S|=n$, and with edge set
$$
E(H(k,n))=\bigcup_{S\subseteq[k]}\bigcup_{i\in S}K_3[X_i,Y_i,Z_S].
$$
Note $H(k,n)$ is an $n$-blowup of an element from $\widehat{\PS(k)}$.  Let $\calH$ be the hereditary $3$-graph property obtained by taking the closure of $\{ H(k,n): k,n\in \mathbb{N}^{\geq 1}\}$ under isomorphisms and induced sub-$3$-graphs.  Since $\calH$ contains $n$-blowups of elements from $\widehat{\PS(k)}$ for every $k$ and $n$, Proposition \ref{prop:equiv2} implies $\calH$ is far from finite VC-dimension.  We leave it as an exercise to the reader to see that on the other hand, $\calH$ has finite slicewise VC-dimension.  By Theorem \ref{thm:exp}, there is some $K$ so that $2^{\e^{-K}}\leq M_{\calH}(\e)$.

The rest of the proof is devoted to the stated upper bound for $M_{\calH}(\e)$.  Fix a sufficiently small $\e>0$, and suppose $H\in \calH$ has $|V(H)|$ sufficiently large.  By definition of $\calH$, we may assume there are $n,k$ so that $V(H)=A\sqcup B\sqcup C$, where $A=A_1\sqcup \cdots \sqcup A_k$, $C=C_1\sqcup \ldots \sqcup C_k$, and $B=\bigsqcup_{S\subseteq [k]}B_S$, and for each $i\in [k]$ and $S\subseteq [k]$, $0\leq |A_i|,|C_i|,|B_S|\leq n$, and where 
\begin{align}\label{al:noimprove}
E(H)=\bigcup_{S\subseteq[k]}\bigcup_{i\in S}K_3[A_i,C_i,B_S].
\end{align}
We first show $H$ has a  $2\e$-homogeneous partition of size at most $2^{\e^{-3}}$.  If one of $A$, $B$, or $C$ is empty, then $H$ has no edges, so $\{V(H)\}$ is such a partition.  Assume now each of $A$, $B$, and $C$ are nonempty. Let $\calP_A=\{A_1,\ldots, A_k\}$, $\calP_C=\{C_1,\ldots, C_k\}$, and $\calP_B=\{B_S: S\subseteq[k]\}$.  Let 
\begin{align*}
\calP^{\bg}_A=\{X\in \calP_A: |X|\geq \e^{2}|A|\}\text{ and }\calP^{\bg}_C=\{X\in \calP_C: |X|\geq \e^{2}|C|\}.
\end{align*}
 Note each of $\calP^{\bg}_A$ and $\calP^{\bg}_C$ have size at most $ \e^{-2}$.  Let $\{i_1,\ldots, i_t\}\subseteq [k]$ be a minimal subset of $[k]$ satisfying 
 $$
 \calP^{\bg}_A\subseteq \{A_{i_1},\ldots, A_{i_t}\}\text{ and } \calP^{\bg}_C\subseteq \{C_{i_1},\ldots, C_{i_t}\}.
 $$
Clearly $t\leq |\calP^{\bg}_A|+|\calP^{\bg}_C|\leq 2 \e^{-2}$. Now define
$$
A_{\sm}:=\bigcup_{j\in [k]\setminus \{i_1,\ldots, i_t\} }A_j \text{ and }C_{\sm}:=\bigcup_{j\in [k]\setminus \{i_1,\ldots, i_t\} }C_j. 
$$
Observe that we have no information about the sizes of $A_{\sm}$ and $C_{\sm}$.  For each subset $T\subseteq \{i_1,\ldots, i_t\}$, let 
$$
B_T'=\bigcup_{S\subseteq [k]: S\cap \{i_1,\ldots, i_t\}=T }B_S.
$$
Note that $\{B_T':T\subseteq \{i_1,\ldots, i_t\}\}$ is a partition of $B$ with at most $2^{t}\leq 2^{2\e^{-2}}$ many parts. Define now 
$$
\calQ:=\{B_T': T\subseteq \{i_1,\ldots, i_t\}\text{ and }B_T'\neq \emptyset \}\cup \{A_{i_1},\ldots, A_{i_t}\}\cup  \{C_{i_1},\ldots, C_{i_t}\} \cup \{A_{\sm},C_{\sm}\}.
$$
Then by construction, $\calQ$ is a partition of $V(H)$ and
$$
|\calQ|\leq 2^{2\e^{-2}}+4\e^{-2}+2\leq 2^{\e^{-3}},
$$
where the last inequality is because $\e$ is sufficiently small. We show $\calQ$  is $2\e$-homogeneous with respect to $H$.  Let 
$$
\Sigma_{\hom}=\{(X,Y,Z)\in \calQ^3: d_H(X,Y,Z)\in [0,\e)\cup (1-\e,1]\},
$$
and set $\calE_{\hom}=\bigcup_{(X,Y,Z)\in \Sigma_{\hom}}X\times Y\times Z$.  We note to the reader that $\Sigma_{\hom}$ is purposefully defined to contain the $\e$-homogeneous triples, not the $2\e$-homogeneous triples.  We will show $|\calE_{\hom}|\geq (1-2\e)|V(H)|^3$, which suffices to prove $\calQ$ is $2\e$-homogeneous.  We begin with a claim.

\begin{claim}\label{cl:shom}
 $|\calE_{\hom}\cap (A\times C\times B)|\geq  (1-2\e) |A||B||C|$.
 \end{claim}
\begin{proof}
We first show that 
\begin{align}\label{al:u}
|\calE_{\hom}\cap (A_{\sm}\times C\times B)|\geq (1-\e)|A_{\sm}||C||B|.
\end{align}
Observe that by our choice of $\{i_1,\ldots, i_t\}$, the definition of $A_{\sm}$, and (\ref{al:noimprove}),
\begin{align*}
|(A_{\sm}\times C\times B)\cap \overline{E(H)}|\leq |B|\sum_{j\in [k]\setminus \{i_1,\ldots, i_t\} } |A_j||C_j| &\leq |B|\sum_{j\in [k]\setminus \{i_1,\ldots, i_t\} } |A_j|\e^2|C| \\
&= \e^2|B||C|\sum_{j\in [k]\setminus \{i_1,\ldots, i_t\} } |A_j| \\
&=\e^2|A_{\sm}||C||B|.
\end{align*}
Combining this with the fact that $A_{\sm}\times C\times B$ is a disjoint union of sets of the form $X\times Y\times Z$ for $X,Y,Z\in \calQ$, we see that Lemma \ref{lem:averaging} implies $|\calE_{\hom}\cap (A_{\sm}\times C\times B)|\geq (1-\e)|A_{\sm}||C||B|$, so (\ref{al:u}) holds. A similar argument shows that  
$$
|\calE_{\hom}\cap (A\times C_{\sm}\times B)|\geq (1-\e)|A||C_{\sm}||B|.
$$
Consider now a  triple $(X,Y,Z)\in \calQ^3$ with $X\subseteq A$, $Y\subseteq C$ and $Z\subseteq B$ such that $X\times Y\times Z$ is disjoint from $A_{\sm}\times C\times B$ and $A\times C_{\sm}\times B$.  By construction, any such $(X,Y,Z)$ has the form $(A_{i_u},C_{i_v},B_T')$ for some $1\leq u,v\leq t$ and $T\subseteq \{i_1,\ldots, i_t\}$.  If $u\neq v$, then (\ref{al:noimprove}) implies $d_H( A_{i_u},C_{i_v},B_T')=0$.  If $u=v$, then (\ref{al:noimprove}) implies  either $i_u=i_v\in T$ and $d_H(A_{i_u},C_{i_v},B_T')=1$, or $i_u=i_v\notin T$ and $d_H(A_{i_u},C_{i_v},B_T')=0$. In either case, we have $(A_{i_u},C_{i_v},B_T')\in \Sigma_{\hom}$. Consequently, 
\begin{align*}
|\calE_{\hom}\cap (A\times C\times B)|&\geq |A||C||B|-\e|A_{\sm}||C|| B|-\e|A|| C_{\sm}||B|\\ 
&\geq (1-2\e)|A||C||B|.
\end{align*}
This concludes the proof of Claim \ref{cl:shom}.
\end{proof}
Clearly Claim \ref{cl:shom} implies that for any permutation $\sigma\colon \{A,B,C\}\rightarrow \{A,B,C\}$, we have $|\calE_{\hom}\cap (\sigma(A)\times \sigma(B)\times \sigma(C))|\geq (1-2\e)|A||B||C|$.  On the other hand, let us call a triple $(X,Y,Z)\in \calQ^3$ \emph{non-crossing} if at least two of the sets $X$, $Y$, $Z$ are contained in one of $A$, $B$, or $C$. Since $H$ is tripartite, any non-crossing triple from $\calQ^3$ has density $0$, and is thus in $\Sigma_{\hom}$.  Combining these observations, we can conclude that  
\begin{align*}
|\calE_{\hom}|&\geq  |V(H)|^3- \sum_{\sigma\colon \{A,B,C\}\rightarrow \{A,B,C\}\text{ a permutation}}2\e|\sigma(A)\times \sigma(B)\times \sigma(C)| \\
&\geq (1-2\e)|V(H)|^3,
\end{align*}
where the last inequality uses that $V(H)=A\sqcup B\sqcup C$.  This concludes our verification that $\calQ$ is a $2\e$-homogeneous partition of $H$.  By Proposition \ref{prop:2.21}, $\calQ$ is $(2\e)^{1/4}$-regular, so we have shown $M_{\calH}((2\e)^{1/4})\leq |\calQ|\leq 2^{\e^{-3}}$. Clearly this implies there exists $K'>0$ such that for all sufficiently small $\e>0$, $M_{\calH}(\e)\leq 2^{\e^{-K'}}$. 
\end{proof}

\section{Jump from Constant to Polynomial}\label{sec:constpoly}

This section contains the jump from constant to polynomial growth for $3$-graphs.   We will first cover preliminaries about almost prime $3$-graphs in Subsection \ref{ss:ap}. In Subsection \ref{ss:constjump} we prove the existence of the desired jump, after which we prove our main result, Theorem \ref{thm:weakhom}.

\subsection{Almost prime $3$-graphs}\label{ss:ap}

 A \emph{prime $3$-graph} is naturally defined to be a $3$-graph $H$ in which all $\sim_H$-classes have size at most $2$.  In this section, we will work with the more general notion of an \emph{almost} prime $3$-graph, which we now define. 

\begin{definition}\label{def:3graphirred}
A $3$-graph $H$ is \emph{almost prime} if every $\sim_H$-class has size at most $3$.  
\end{definition}

The following sets of $3$-graphs will play an important role in Theorem \ref{thm:weakhom}, analogous to the role played by the graphs from Definition \ref{def:prime} in Theorem \ref{thm:alljumphom}.

\begin{definition}\label{def:prime3}
Given $k\geq 1$, let $\widehat{\Irr(k)}$  be the set of all $3$-graphs $H=(V,E)$ such  that there exists some labeling of the vertices  $V=\{a_1,\ldots, a_k,b_1,\ldots, b_k, c_1,\ldots, c_k\}$ (possibly with repetitions) satisfying 
$$
\{a_1,\ldots, a_k\}\cap \{b_1,\ldots, b_k,c_1,\ldots, c_k\}=\emptyset \text{ and }|\{a_i,b_i,c_i\}|=3\text{ for each }i\in [k],
$$
and such that one of (a)-(c) holds.
\begin{enumerate}[(a)]
\item $a_ib_jc_j\in E$ if and only if $i=j$.
\item $a_ib_jc_j\in E$ if and only if $i\neq j$.
\item $a_ib_jc_j\in E$ if and only if $i\leq j$.
\end{enumerate} 
\end{definition}

In analogy to Theorem \ref{thm:prime}, we will show that any sufficiently large almost prime $3$-graph contains an element of $\widehat{\Irr(k)}$ as an induced sub-$3$-graph.
 
\begin{theorem}\label{thm:prime3}
For all integers $m\geq 1$ there is an integer $N\geq 1$ such that the following holds. Suppose $H$ is an almost prime $3$-graph with at least $N$ vertices. Then $H$ contains an element of $\widehat{\Irr(m)}$ as an induced sub-$3$-graph.
\end{theorem}

The proof of Theorem \ref{thm:prime3} is similar to the proof of Theorem \ref{thm:prime} and appears in the appendix (see Appendix \ref{ss:ramsey2}).  

While Theorem \ref{thm:prime3} suffices for the purposes of our main theorems, it is  not a full analogue of Theorem \ref{thm:prime} because the $3$-graphs in $\widehat{\Irr(m)}$ are not necessarily themselves almost prime.  Consider a $3$-graph $H$ with $3m$ vertices  $\{a_1,\ldots, a_m,b_1,\ldots, b_m, c_1,\ldots, c_m\}$ and edge set $\{a_ib_ic_j: i,j\in [m]\}$. Then $H\in \widehat{\Irr(m)}$ because its edge set satisfies condition (a) of Definition \ref{def:prime3}. However, as long as $m>1$, $H$ is not almost prime because all of the vertices $c_1,\ldots, c_m$ are in the same $\sim_H$-class.  On the other hand,  $H$ does contain  an obvious almost prime induced sub-$3$-graph, namely $H[\{a_1,\ldots, a_m,b_1,\ldots, b_m,c_1\}]$. 

In general, one could produce a fuller analogue of Theorem \ref{thm:prime} by combining Theorem \ref{thm:prime3} with a proof that for all $k\geq 1$, there is $m\geq 1$ so that every element of $\widehat{\Irr(m)}$ contains an almost prime induced sub-$3$-graph on at least $k$ vertices.   However, such a result would be significantly more technical than Theorem \ref{thm:prime3}, and is not necessary for our main results.

\subsection{Proof of the jump}\label{ss:constjump}

In analogy to Section \ref{sec:graphs}, we will show that when ${\bf B}_{\calH}$ contains only finitely many non-isomorphic almost prime $3$-graphs, $M_{\calH}$ is constant, and when  ${\bf B}_{\calH}$ contains infinitely many non-isomorphic almost prime $3$-graphs,  $M_{\calH}$ is at least polynomial.   

Our first step is to use Lemma \ref{lem:12blowup} to show that when ${\bf B}_{\calH}\cap \widehat{\Irr(m)}\neq \emptyset$ for  arbitrarily large $m$,  $M_{\calH}$ is at least polynomial.

\begin{theorem}\label{thm:vcfar3}
Suppose $\calH$ is a hereditary $3$-graph property and assume that for all $m\geq 1$, ${\bf B}_{\calH}\cap \widehat{\Irr(m)}\neq \emptyset$.   Then $M_{\calH}(\e)\geq \Omega(\e^{-1/8})$.
\end{theorem}
\begin{proof}
 Let $\e>0$ be sufficiently small, and set $K=\lfloor (33\e^{1/8})^{-1} \rfloor$.  By assumption, there is some $H\in {\bf B}_{\calH}\cap \widehat{\Irr(K)}$.  By definition of $\widehat{\Irr(K)}$, the vertex set of $H$ can be written (possibly with some repetitions) as 
 $$
 \{a_1,\ldots, a_K,b_1,\ldots, b_K, c_1,\ldots, c_K\},
 $$
such that for each $i\in [K]$, $|\{a_i,b_i,c_i\}|=3$, such that 
$$
 \{a_1,\ldots, a_K\}\cap \{b_1,\ldots, b_K, c_1,\ldots, c_K\}=\emptyset,
$$
and such that one of (a)-(c) holds.
\begin{enumerate}[(a)]
\item $a_ib_jc_j\in E(H)$ if and only if $i=j$.
\item $a_ib_jc_j\in E(H)$ if and only if $i\neq j$.
\item $a_ib_jc_j\in E(H)$ if and only if $i\leq j$.
\end{enumerate} 
 Let $X=\{a_1,\ldots, a_K\}$ and $Y=\{b_1,\ldots, b_K,c_1,\ldots, c_K\}$.   It is not difficult to see that, because one of (a)-(c) holds, we must have $|X|=K$.  Let $n_1$ be sufficiently large and divisible by $|Y|$, and set $n_2=n_1|X||Y|^{-1}$.  Let $X_1,\ldots, X_t$ be an enumeration of the $\sim^Y_H$-classes in $X$.  It is not hard to check that, because one of (a)-(c) holds, each $X_i$ has size $1$.
 
 Since $H\in {\bf B}_{\calH}$, there exists an $(n_1,n_2; X,Y)$-blowup ${\bf H}$ of $H$ satisfying ${\bf H}\in \calH$.  Let $V({\bf H})=\bigsqcup_{u\in V(H)}V_u$ be as in Definition \ref{def:blowupgraph}.  Set ${\bf X}=\bigcup_{x\in X}V_x$ and for each $1\leq i\leq t$, let ${\bf X}_i=\bigcup_{x\in X_i}V_x$.  Note $|{\bf X}|=|X|n_1=Kn_1$, and each ${\bf X}_i$ has size $n_1$ (since each $X_i$ has size $1$).
 
 We check the hypotheses of Corollary \ref{cor:12blowup} are satisfied by $H$ and ${\bf H}$ with respect to the sets $X$ and $Y$, and the parameter $\e$.  We have already argued each of the sets $X_1,\ldots, X_t$ has size $1$. By definition of $n_1$ and $n_2$, $|X|n_1=|Y|n_2$. Note 
 $$
 |Y|\leq 2K=2\lfloor (33\e^{1/8})^{-1}\rfloor \leq   2 (33\e^{1/8})^{-1} <\frac{1}{16 \e^{1/8}}.
 $$
Consequently, we have $\e<(\frac{1}{16|Y|})^8$. This concludes our verification of the hypotheses of Corollary \ref{cor:12blowup}.  Corollary \ref{cor:12blowup} then implies that any $\e$-regular partition $\calP$ of ${\bf H}$ satisfies 
\begin{align*}
|\calP|\geq  (1-2\e^{1/8})(1-\e^{1/4}) |X|=(1-2\e^{1/8})(1-\e^{1/4})\lfloor (33\e^{1/8})^{-1}\rfloor\geq  \e^{-1/8}/34,
\end{align*}
where the equality uses $|X|=K$, and the last inequality is because $\e$ is sufficiently small.  The argument above shows $M_{\calH}(\e)\geq \Omega(\e^{-1/8})$, as desired.
\end{proof}

  Using Theorem \ref{thm:prime3}, we next  show that  when ${\bf B}_{\calH}$ contains infinitely many non-isomorphic almost prime $3$-graphs, the  hypotheses of Theorem \ref{thm:vcfar3} are satisfied.   
 
 \begin{proposition}\label{prop:equiv3}
Suppose $\calH$ is a hereditary $3$-graph property and ${\bf B}_{\calH}$ contains infinitely many non-isomorphic almost prime $3$-graphs.
Then for all $m\geq 1$, ${\bf B}_{\calH}\cap \widehat{\Irr(m)}\neq \emptyset$.  
\end{proposition}
\begin{proof}
Given $m\geq 1$, let $N=N(m)$ be as in Theorem \ref{thm:prime3}.  By assumption, ${\bf B}_{\calH}$ contains an almost prime $3$-graph $H$ on at least $N$ vertices. By Theorem \ref{thm:prime3}, $H$ contains an induced sub-$3$-graph $H'$ such that $H'\in  \widehat{\Irr(m)}$.  Since ${\bf B}_{\calH}$ is hereditary (by Fact \ref{fact:bhhgraphs}), $H'\in {\bf B}_{\calH}$. Thus ${\bf B}_{\calH}\cap  \widehat{\Irr(m)}\neq \emptyset$, as desired.
\end{proof}
 
Recall from Subsection \ref{ss:ap} that $\widehat{\Irr(m)}$ contains $3$-graphs which are not themselves almost prime. For this reason, it is not immediate  that the converse of Proposition \ref{prop:equiv3} holds.  However, we will be able to deduce this from our results later on (see Proposition \ref{prop:equiv4}).  

We can now combine Proposition \ref{prop:equiv3} with Theorem \ref{thm:vcfar3} to show that when ${\bf B}_{\calH}$ contains arbitrarily large almost prime $3$-graphs, $M_{\calH}$ is at least polynomial.  

\begin{corollary}\label{cor:vcfar3}
Suppose $\calH$ is a hereditary $3$-graph property and assume ${\bf B}_{\calH}$ contains infinitely many non-isomorphic almost prime $3$-graphs.  Then  $M_{\calH}(\e)\geq \Omega(\e^{-1/8})$.
\end{corollary}
\begin{proof}
Fix a hereditary $3$-graph property  $\calH$, and assume ${\bf B}_{\calH}$ contains infinitely many non-isomorphic almost prime $3$-graphs.   By Proposition \ref{prop:equiv3}, we have that for all $m\geq 1$, there is some $H\in {\bf B}_{\calH}\cap  \widehat{\Irr(m)}$. By Theorem \ref{thm:vcfar3}, $M_{\calH}(\e)\geq \Omega(\e^{-1/8})$.
\end{proof}

Our next task is to show that when ${\bf B}_{\calH}$ contains only finitely many almost prime $3$-graphs up to isomorphism, $M_{\calH}$ is constant.   We will need some auxiliary results about almost prime $3$-graphs. First, we will use the $3$-graph analogue of Observation \ref{ob:sizeAP}, which follows immediately from the fact all $\sim_H$-classes in an almost prime $3$-graph $H$ contain at most $3$ vertices (see Definition \ref{def:3graphirred}).

\begin{observation}\label{ob:irr3graphs}
Let $H=(V,E)$ be an almost prime $3$-graph and let $\ell$ be the number of $\sim_H$-classes in $H$. Then $\ell \leq |V(H)|\leq 3\ell$. 
\end{observation}

Our next lemma tells us that if a $3$-graph $H$ has exactly $\ell$-many $\sim_H$-classes, then we can find an almost prime induced sub-$3$-graph witnessing this. 

\begin{lemma}\label{lem:irr3graphs2}
Suppose $H=(V,E)$ is a $3$-graph and $\ell$ is the number of $\sim_H$-classes in $H$. Then $H$ contains an induced almost prime sub-$3$-graph $H'$ with $\ell$-many $\sim_{H'}$-classes.   
\end{lemma}

The proof of Lemma \ref{lem:irr3graphs2} is straightforward, although slightly harder than its graph theoretic analogue, Lemma \ref{lem:irrgraphs}. Indeed, one can construct the desired $H'$ by keeping one vertex from each $\sim_H$-class of size $1$, two vertices from each $\sim_H$-class of size $2$, and three vertices from any larger $\sim_H$-classes.  A formal proof appears in Appendix \ref{ss:ap2}. 

The last preliminary we need is a theorem showing the class of $3$-graphs with at most $C$-many $\sim$-classes can be characterized by finitely many forbidden sub-$3$-graphs.  To state this result (Theorem \ref{thm:irrgraphs2hg} below), we first define a set of special $3$-graphs.  We will use the same notation from the graph theoretic analogue (Definition \ref{def:fc2}). No confusion should arise from this choice, as we consider only $3$-graphs in this section.  The appearance of the Ramsey number in Definition \ref{def:fc} reflects the increased complexity of the $3$-graph setting.

\begin{definition}\label{def:fc}
Let $K$ be the $6$-color Ramsey number $R(3,3,3,3,3,3)$, and let $N=3K+9$. Recalling that $\calG^{(3)}$ denotes the class of finite $3$-graphs, define the following for each integer $t\geq 1$.
\begin{align*}
\calF_{t}=\{F\in \calG^{(3)}: \text{ $F$  is almost prime, $F$ has at least $t$-many $\sim_F$-classes, }&\\
 \text{ and $|V(F)|\leq tN$}\}&.
\end{align*}
\end{definition}

Observe that every element of $\calF_{t}$ has at most $tN$ vertices, so $\calF_{t}$ contains only finitely many non-isomorphic $3$-graphs.  

\begin{theorem}\label{thm:irrgraphs2hg}
For all integers $C\geq 1$, if 
\begin{align*}
 \calH_C=\{H\in \calG^{(3)}: \text{$H$ has at most $C$-many $\sim_H$-classes}\},
 \end{align*}
then $\calH_C=\Forb(\calF_{C+1})$, where $\calF_{C+1}$ is from Definition \ref{def:fc}.
\end{theorem}

The proof of Theorem \ref{thm:irrgraphs2hg} appears in Appendix \ref{ss:ap2}.  It is similar to, but significantly trickier than, the proof of the graph analogue, Theorem \ref{thm:irrgraphs2}.  We observe that it is very easy to characterize the class $\calH_C$ from Theorem \ref{thm:irrgraphs2hg} via infinitely many forbidden $3$-graphs: one simply forbids all $3$-graphs with more than $C$-many $\sim$-classes. The purpose of Theorem \ref{thm:irrgraphs2hg} is to show this can be done by forbidding \emph{finitely many} non-isomorphic $3$-graphs.  

We will use Theorem \ref{thm:irrgraphs2hg}  in conjunction with Theorem \ref{thm:blowupthm}, to show that, given an integer $C\geq 1$, $M_{\calH}(\e)\leq C$ if and only if $\calF_{C+1}\cap {\bf B}_{\calH}=\emptyset$ (see Theorem \ref{thm:bhfinite1} below). Since $\calF_{C+1}$ contains finitely many $3$-graphs up to isomorphism,  the condition $\calF_{C+1}\cap {\bf B}_{\calH}=\emptyset$ translates into finitely many forbidden $3$-graphs in $\calH$ (via the definition of ${\bf B}_{\calH}$).  Thus, Theorem \ref{thm:irrgraphs2hg} allows us to show that when $M_{\calH}(\e)$ is bounded above by a constant, there are finitely many forbidden $3$-graphs responsible.\footnote{One could instead characterize the class $\calH_C$ in Theorem \ref{thm:irrgraphs2hg} via an infinite collection of forbidden sub-$3$-graphs, then apply the more general version of Theorem \ref{thm:blowupthm} for infinite families of forbidden sub-$3$-graphs (see the discussion following the statement of Theorem \ref{thm:blowupthm}).  This approach would characterize the constant growth classes via infinitely many forbidden $3$-graphs, rather than finitely many.  }

We now prove that if ${\bf B}_{\calH}$ avoids $\calF_{C+1}$ from Theorem \ref{thm:irrgraphs2hg}, then $M_{\calH}(\e)\leq C$. 

\begin{theorem}\label{thm:bhfinite1}
Suppose $C\geq 1$ is an integer. Then for any hereditary $3$-graph property  $\calH$ satisfying ${\bf B}_{\calH}\cap \calF_{C+1}=\emptyset$,  $M_{\calH}(\e)\leq M_{\calH}^{\hom}(\e^4)\leq C$. 
\end{theorem}
\begin{proof}
Let $\calH_C=\Forb(\calF_{C+1})$. By Theorem \ref{thm:irrgraphs2hg}, $\calH_C$ is equal to the class of all finite $3$-graphs $H$ with at most $C$-many $\sim_H$-classes. Since ${\bf B}_{\calH}\cap \calF_{C+1}=\emptyset$, Theorem \ref{thm:blowupthm} implies $\calH$ is close to $\calH_C$.

Fix $\e>0$ sufficiently small compared to $C^{-1}$. We show $M^{\hom}_{\calH_C}(\e)\leq C$. Let $H=(U, E)$ be a sufficiently large element of $\calH_C$.  Since $H\in \calH_C$, $H$ has at most $C$-many $\sim_{H}$-classes.  Let  $U_1,\ldots, U_t$ enumerate the $\sim_{H}$-classes, where $t\leq C$.  One can now show $\{U_1,\ldots, U_t\}$  is an $\e$-homogeneous partition with respect to $H$ via a similar argument to that at the end of the proof of Lemma \ref{lem:finitegraph}.  The arguments are sufficiently close  that we omit the details here.  From this we can conclude $M^{\hom}_{\calH_C}(\e)\leq C$. 

Since $\calH$ is close to $\calH_C$, Proposition \ref{prop:closehom} implies $M^{\hom}_{\calH}(\e)\leq M_{\calH_C}^{\hom}(\e/2)\leq C$.   Combining with  Fact \ref{lem:homup3graphs}, we have that for all sufficiently small $\e$, $M_{\calH}(\e)\leq M_{\calH}^{\hom}(\e^4)\leq C$.   
\end{proof}

We next show that $M_{\calH}(\e)$ can be lower bounded by the number of $\sim$-classes appearing in an  almost prime element of ${\bf B}_{\calH}$.  

\begin{theorem}\label{thm:bhfinite2}
Suppose $C\geq 1$ is an integer and $H=(U,E)$ is an almost prime $3$-graph with $C$-many $\sim_H$-classes.  If $\calH$ is a hereditary $3$-graph property and $H\in {\bf B}_{\calH}$, then for all sufficiently small $\e>0$, $M_{\calH}(\e)\geq C$. 
\end{theorem}
\begin{proof}
Let $U_1,\ldots, U_C$ enumerate the $\sim_H$-classes of $H$.    Since $H$ is almost prime, each $U_i$ has size $1$, $2$, or $3$, and $C\leq |V(H)|\leq 3C$ (by Observation \ref{ob:irr3graphs}).  Fix $\e$ sufficiently small compared to $C^{-1}$ and $n$ sufficiently large.

Since $H\in {\bf B}_{\calH}$, there exists an $n$-blowup ${\bf H}$ of $H$ with ${\bf H}\in \calH$.  We may assume ${\bf H}$ has vertex set $V({\bf H})=\bigcup_{u\in U}V_u$, where $|V_u|=n$ for all $u\in U$, and edge set $E({\bf H})$ satisfying
$$
\bigcup_{u_1u_2u_3\in E(H)}K_3[V_{u_1},V_{u_2},V_{u_3}]\subseteq E({\bf H})\text{ and }\left(\bigcup_{u_1u_2u_3\in {U\choose 3}\setminus E(H)}K_3[V_{u_1},V_{u_2},V_{u_3}]\right)\cap E({\bf H})=\emptyset.
$$
Note $Cn\leq |V({\bf H})|\leq 3Cn$. For each $1\leq i\leq C$, set ${\bf U}_i=\bigcup_{u\in U_i}V_u$. Observe that $V({\bf H})={\bf U}_1\sqcup \ldots \sqcup {\bf U}_C$, and each ${\bf U}_i$ has size $n$, $2n$, or $3n$.  

 Set $X=Y=V(H)$, and note ${\bf H}$ is an $(n,n; X,Y)$-blowup of $H$. We now check the hypotheses of Lemma \ref{lem:12blowup} hold for $H$ and ${\bf H}$ with respect to the sets $X$ and $Y$ and the parameter $\e$.  Clearly $X=Y$ and $n|X|=n|Y|$.   Since $\e$ is sufficiently small compared to $C^{-1}$ and $|Y|=|V(H)|\leq 3C$, we have $\e<(\frac{1}{16|Y|})^8$. This finishes our verification of the hypotheses of Lemma \ref{lem:12blowup}.  
 
 Assume $\calP$ is an  $\e$-regular partition of ${\bf H}$. Lemma \ref{lem:12blowup} implies the following holds, where ${\bf X}=\bigcup_{x\in X}V_x=V({\bf H})$: there exists $\calP'\subseteq \calP$ such that 
\begin{align}\label{al:pxlb}
\left| \bigcup_{P\in \calP'}P\cap {\bf X}\right|=\left|\bigcup_{P\in \calP'}P\right|\geq (1-2\e^{1/8})|V({\bf H})|,
\end{align}
and such that for all $P\in \calP'$, there is $1\leq g(P)\leq C$ such that  
\begin{align}\label{al:pubound}
\Big| P\cap {\bf U}_{g(P)}\Big|\geq (1-\e^{1/4})|P|.
\end{align} 
Combining (\ref{al:pxlb}), (\ref{al:pubound}),  the fact each $U_i$ has size at least $1$, and the fact each ${\bf U}_i$ has size  $n|U_i|$, we have the following.
\begin{align*}
\nonumber n| [C]\setminus \{g(P): P\in \calP'\}|\leq n\left|\bigcup_{i\in [C]\setminus \{g(P): P\in \calP'\}}U_i\right|&= \left| \bigcup_{i\in [C]\setminus\{g(P): P\in \calP'\}}{\bf U}_i\right|\\
&\leq \left|V({\bf H})\setminus \left(\bigcup_{P\in \calP'}P\right)\right| + \sum_{P\in \calP'}|P\setminus {\bf U}_{g(P)}|\\
&\leq   2\e^{1/8}|V({\bf H})| + \sum_{P\in \calP'}\e^{1/4}|P| \\
&\leq (2\e^{1/8}+\e^{1/4})|V({\bf H})|\\
&\leq (2\e^{1/8}+\e^{1/4})3Cn,
\end{align*}
where the last inequality uses that $|V({\bf H})|\leq 3Cn$.  Canceling $n$ yields 
\begin{align}\label{al:pxlb1}
| [C]\setminus \{g(P): P\in \calP'\}|\leq ( 2\e^{1/8}+\e^{1/4})3C<1,
\end{align}
where the last inequality uses that   $\e$ is sufficiently small compared to $C^{-1}$.  Since the left-hand side of (\ref{al:pxlb1}) must be a non-negative integer, it must be $0$.  Thus $|\{g(P): P\in \calP'\}|=C$, and consequently, $|\calP|\geq |\calP'|\geq C$.  This shows $M_{\calH}(\e)\geq C$.
\end{proof}

Using the results above, we next show that when ${\bf B}_{\calH}$ contains finitely many almost prime $3$-graphs up to isomorphism, $M_{\calH}$ is a constant equal to the maximum number of $\sim$-classes appearing in any element of ${\bf B}_{\calH}$.

\begin{theorem}\label{thm:bhfinite3}
 Suppose $\calH$ is a hereditary $3$-graph property and ${\bf B}_{\calH}$ contains finitely many almost prime $3$-graphs  up to isomorphism.   Then there exists a positive integer $C$ such that for all sufficiently small $\e>0$,  
 \begin{align*}
C= M_{\calH}(\e)=M_{\calH}^{\hom}(\e)&= \max\{\ell\in \mathbb{N}^{\geq 1}: \text{ there is }H\in {\bf B}_{\calH}\text{ with $\ell$-many $\sim_H$-classes}\}\\
 &=\max\{\ell\in \mathbb{N}^{\geq 1}: {\bf B}_{\calH}\cap \calF_{\ell}\neq \emptyset\},
\end{align*}
where $\calF_{\ell}$ is as in Definition \ref{def:fc}. 
\end{theorem}
\begin{proof}
We will use throughout the proof that ${\bf B}_{\calH}$ is a hereditary $3$-graph property (by Fact \ref{fact:bhhgraphs}), and is thus nonempty and closed under induced sub-$3$-graphs. We begin by defining three subsets of $\mathbb{N}^{\geq 1}$.
\begin{align*}
A_1&=\{\ell\in \mathbb{N}^{\geq 1}: \text{ there is an almost prime }H\in {\bf B}_{\calH} \text{ with $\ell$-many $\sim_H$-classes}\}.\\
A_2&=\{\ell\in \mathbb{N}^{\geq 1}: \text{ there is some }H\in {\bf B}_{\calH} \text{ with $\ell$-many $\sim_H$-classes}\}.\\
A_3&=\{\ell \in \mathbb{N}^{\geq 1}: {\bf B}_{\calH}\cap \calF_{\ell}\neq \emptyset\}.
\end{align*}
 Since ${\bf B}_{\calH}$ is  nonempty, $A_2\neq \emptyset$.  Lemma \ref{lem:irr3graphs2} and the fact ${\bf B}_{\calH}$ is hereditary imply $A_1=A_2$.  By assumption, ${\bf B}_{\calH}$ contains only finitely many almost prime $3$-graphs  up to isomorphism, so $A_1=A_2$ is finite. Thus, the following integer is well defined.
 \begin{align*}
C =\max A_1=\max A_2.  
\end{align*}
Given any integer $t\geq 1$, all elements of $\calF_t$ have at least $t$-many $\sim$-classes by definition. Since $C=\max A_2$, we can deduce that ${\bf B}_{\calH}\cap \calF_t=\emptyset$ for all $t>C$.  Thus $A_3\subseteq \{1,\ldots, C\}$.  Note that by definition, $\calF_1$ contains the trivial $3$-graph $H_{\text{triv}}$ with one vertex.  Since ${\bf B}_{\calH}$ is hereditary, $H_{\text{triv}}\in {\bf B}_{\calH}\cap \calF_1$, and consequently, $1\in A_3$.  These observations tell us the integer $L=\max A_3$ is well defined and satisfies $1\leq L\leq C$.  If $C=1$, we can immediately conclude $C=L=1$.  If $C>1$, then $C=\max A_2$ and Theorem \ref{thm:irrgraphs2hg} imply ${\bf B}_{\calH}\cap \calF_C\neq \emptyset$. This yields $L\geq C$, and thus, $C=L$.

Since ${\bf B}_{\calH}\subseteq \Forb(\calF_{C+1})$, Theorem \ref{thm:bhfinite1}  implies $M_{\calH}(\e)\leq M_{\calH}^{\hom}(\e^4)\leq C$.  On the other hand, since $C=\max A_1$, there exists an almost prime $H\in {\bf B}_{\calH}$ with $C$ many $\sim_H$-classes, so by Theorem \ref{thm:bhfinite2}, $M_{\calH}(\e)\geq C$.  Combining everything together, we have that for all sufficiently small $\e$, $M_{\calH}(\e)=M_{\calH}^{\hom}(\e)=C=L$.  
\end{proof}

We can now strengthen Proposition \ref{prop:equiv3} to an equivalence. 

\begin{proposition}\label{prop:equiv4}
Suppose $\calH$ is a hereditary $3$-graph property. Then the following are equivalent.
\begin{enumerate}
\item ${\bf B}_{\calH}$ contains infinitely many non-isomorphic almost prime $3$-graphs.
\item For all $m\geq 1$, ${\bf B}_{\calH}\cap \widehat{\Irr(m)}\neq \emptyset$.  
\end{enumerate}
\end{proposition}
\begin{proof}
That (1) implies (2) was proved in Proposition \ref{prop:equiv3}.  Assume now (2) holds.  By Theorem \ref{thm:vcfar3}, $M_{\calH}(\e)\geq \Omega(\e^{-1/8})$.  Since $M_{\calH}$ is not asymptotically a constant function,  Theorem \ref{thm:bhfinite3} implies  (1) holds. 
\end{proof}

We now put things together to prove our main theorem, Theorem \ref{thm:weakhom}.

\vspace{2mm}

\noindent{\bf Proof of Theorem \ref{thm:weakhom}.}
Fix $\calH$ a hereditary $3$-graph property.  Suppose first $\calH$ is far from finite SVC-dimension. By Theorem \ref{thm:sl1}, 
$$
\Tw(\Omega(\e^{-1}))\leq M_{\calH}(\e)\leq \Tw(6\e^{-4}),
$$
 so (1) holds.  Suppose now that $\calH$ is close to finite SVC-dimension.   If $\calH$ is far from finite VC-dimension,  then by Theorem \ref{thm:exp}(2), there is $C>0$ such that 
 $$
 2^{\Omega(\e^{-1/8})}\leq M_{\calH}(\e)\leq M_{\calH}^{\hom}(\e^4)\leq 2^{2^{\e^{-C}}},
 $$
  so (2) holds.  Suppose now $\calH$ is close to finite VC-dimension. If ${\bf B}_{\calH}$ contains infinitely many non-isomorphic almost prime $3$-graphs, then Corollary \ref{cor:vcfar3} and Theorem \ref{thm:exp}(1) imply there is $C>0$ such that
$$
\Omega(\e^{-1/8})\leq M_{\calH}(\e)\leq M_{\calH}^{\hom}(\e^4)\leq \e^{-C},
$$
so (3) holds. Finally, if ${\bf B}_{\calH}$ contains finitely many non-isomorphic almost prime $3$-graphs, then by Theorem  \ref{thm:bhfinite3}, there is a constant $C$ so that $M_{\calH}(\e)=M_{\calH}^{\hom}(\e)=C$ for all sufficiently small $\e>0$, so    (4) holds. 
\qed

\appendix

\section{Proof of Lemma \ref{lem:slicingcor} }\label{ss:standardlemmas} 
 In this section, we prove  Lemma \ref{lem:slicingcor} (restated below as Lemma \ref{lem:slicingapp}).   To prove this result, we require the following ``slicing lemma," which says that large sub-triples of regular triples are still somewhat regular.  
 
\begin{proposition}\label{prop:slicing3} Suppose $0<\e\leq \gamma<1$. Let $H=(V , E)$ be a $3$-graph,  let $A,B,C\subseteq V$ be nonempty sets, and assume $(A,B,C)$ is $\e$-regular with respect to $H$.  For any $A'\subseteq A$,  $B'\subseteq B$, and $C'\subseteq C$ satisfying $ |A'| \geq \gamma |A|$, $|B'| \geq \gamma|B|$, and $|C'| \geq \gamma|C|$, the triple $(A',B',C')$ is $2\gamma^{-1}\e $-regular with respect to $H$ and $|d_H(A',B',C')-d_H(A,B,C)|\leq \e$.  
\end{proposition}
 
We leave the proof of Proposition \ref{prop:slicing3} as an exercise for the reader, as it is essentially identical to the well-known analogue in the graphs case (see Lemma 3.1 in \cite{Alon.2000}).  

\begin{lemma}\label{lem:slicingapp}
Let $\e\in (0,1)$, let $H=(V,E)$ be a $3$-graph, and let $\calP$ be an $\e$-regular partition of $H$.  Suppose  $\calP'$ is a partition of $V$ with at most $2$ parts, and let $\calQ$ be the common refinement of $\calP$ with $\calP'$.   Then $|\bigcup_{(X,Y,Z)\in \Sigma}X\times Y\times Z|\geq (1-7\e^{1/2})|V|^3$, where 
$$
\Sigma=\{(X,Y,Z)\in \calQ^3: (X,Y,Z)\text{ is $2\e^{1/2}$-regular with respect to $H$}\}.
$$ 
\end{lemma}
\begin{proof}
Fix an enumeration $\calP=\{X_1,\ldots, X_t\}$ and let 
$$
\Sigma_{\reg}=\{(X_i,X_j,X_k)\in \calP^3: (X_i,X_j,X_k)\text{ is $\e$-regular with respect to $H$}\}.
$$
Since $\calP$ is $\e$-regular, $|\bigcup_{(X_i,X_j,X_k)\in \Sigma_{\reg}}X_i\times X_j\times X_k|\geq (1-\e)|V(H)|^3$.  Since $\calP'$ has size at most $2$, for each $i\in [t]$, we can write $X_i=X_i^1\cup X_i^2$, where each of $X_i^1,X_i^2$ is either in $\calQ$ or equal to the empty set.  By definition, $\calQ$ consists of all nonempty sets of the form $X_i^{\alpha}$.   We next set aside the elements in $\calQ$ which have shrunk too much compared to the original sets in $\calP$.  In particular, define
$$
\calQ_{\sm}=\{X_i^{\alpha}:i\in [t],1\leq \alpha\leq 2,\text{ and } 0<|X_i^{\alpha}|< \e^{1/2}|X_i|\}.
$$
Observe that 
\begin{align}\label{al:slicingcor1}
\sum_{Q\in \calQ_{\sm}}|Q|\leq \sum_{i=1}^t 2\e^{1/2}|X_i|=2\e^{1/2}|V(H)|.
\end{align}
We next consider the set of triples from $\calQ^3$ which come from a triple in $\Sigma_{\reg}$, and which avoid $\calQ_{\sm}$.  Specifically, we set
$$
\Sigma_{\reg}'=\{(X_i^{\alpha_1},X_j^{\alpha_2},X_k^{\alpha_3})\in \calQ^3: (X_i,X_j,X_k)\in \Sigma_{\reg}\text{ and }X_i^{\alpha_1},X_j^{\alpha_2},X_k^{\alpha_3}\in \calQ\setminus \calQ_{\sm}\}.
$$
By Proposition \ref{prop:slicing3}, every $(X_i^{\alpha_1},X_j^{\alpha_2},X_k^{\alpha_3})\in \Sigma_{\reg}'$ is $2\e^{1/2}$-regular with respect to $H$.   Thus, it suffices to show $|\bigcup_{(X,Y,Z)\in \Sigma_{\reg}'}X\times Y\times Z|\geq (1-7\e^{1/2})|V(H)|^3$. Observe,
\begin{align*}
\left|\bigcup_{(X,Y,Z)\in \Sigma_{\reg}'}X\times Y\times Z\right|&\geq \left|\bigcup_{(X,Y,Z)\in \Sigma_{\reg}}X\times Y\times Z\right|-3|V(H)|^2\sum_{Q\in \calQ_{\sm}}|Q|\\
&\geq (1-\e)|V(H)|^3 -6\e^{1/2}|V(H)|^3\\
&\geq (1-7\e^{1/2})|V(H)|^3,
\end{align*}
where the second inequality uses (\ref{al:slicingcor1}) and the fact that $\calP$ is $\e$-regular.  
\end{proof}

\section{Proofs of Theorems \ref{thm:irrgraphs2} and \ref{thm:irrgraphs2hg}}

In this section, we first prove Theorem \ref{thm:irrgraphs2}, which shows the class of finite graphs with at most $C$-many $\sim$-classes is characterized by omitting a certain finite collection of induced subgraphs.  We then prove the $3$-graph companion, Theorem \ref{thm:irrgraphs2hg}.  

\subsection{Proof of Theorem \ref{thm:irrgraphs2}}\label{ss:ap1}

We will require several new auxiliary lemmas in addition to those stated in Subsection \ref{ss:contpolygraphs}. First, it is easy to show that the equivalence relation $\sim_G$ on a graph $G$  coarsens when passing to induced subgraphs.

\begin{lemma}\label{lem:coarsen}
Let $G=(V,E)$ be a graph, and let $U_1,\ldots, U_t$ be an enumeration of the $\sim_G$-classes in $G$.  Assume $G'=(V',E')$ is an induced subgraph of $G$.  Then every $\sim_{G'}$-class in $G'$ is a union of sets from $\{V'\cap U_i: 1\leq i\leq t\}$.  
\end{lemma} 
 \begin{proof}
It suffices to show that if $v,v'\in V'$ and $v\sim_Gv'$, then $v\sim_{G'}v'$.  This is immediate from Definition \ref{def:sim}.
\end{proof}

Next, we prove Lemma \ref{lem:irrgraphs} (repeated as Lemma \ref{lem:irrgraphsrepeat} below), which says that we can always find an almost prime induced subgraph of $G$ with the same number of equivalence classes as $G$.

\begin{lemma}\label{lem:irrgraphsrepeat}
Let $G=(V,E)$ be a graph and let $\ell$ be the number of $\sim_G$-classes in $G$. Then  $G$ contains an almost prime induced subgraph $G'$ with $\ell$-many $\sim_{G'}$-classes.   
\end{lemma}
\begin{proof}
Let $U_1,\ldots, U_{\ell}$ enumerate the $\sim_G$-classes of $G$.  For each $1\leq i\leq \ell$, define a subset $U'_i\subseteq U_i$ as follows.  If $|U_i|=1$, set $U_i'=U_i$. Otherwise, let $U_i'$ be any two-element subset of $U_i$.   Since each $U_i'$ has size at most $2$ by construction, it suffices to show  $U_1',\ldots, U_{\ell}'$  are exactly the $\sim_{G'}$-classes of $G':=G[U_1'\cup \ldots \cup U_{\ell}']$.  We will use below that for each $1\leq i\leq \ell$, if $|U_i'|<|U_i|$, then $|U_i'|=2$.

By Lemma \ref{lem:coarsen}, we know that each $U_i'$ is contained in a single $\sim_{G'}$-class.  Thus it suffices to show that for each $1\leq i\neq j\leq \ell$, and each $u\in U_i'$ and $u'\in U_j'$, we have $u\nsim_{G'}u'$. 

To this end, fix $1\leq i\neq j\leq \ell$, along with $u\in U'_i$ and $u'\in U'_j$.  Since $u\nsim_G u'$,  there is some $v\in (N_G(u)\Delta N_G(u'))\setminus \{u,u'\}$.  By relabeling if necessary, we may assume $uv\in E$ and $u'v\notin E$.  If $v\in V(G')$, this immediately implies $u\nsim_{G'}u'$, and we are done.  Assume now $v\notin V(G')$.  
\begin{claim}\label{cl:irrgraphs1}
There is $x\in V(G')\setminus \{u,u'\}$ such that $x\sim_G v$. 
\end{claim}
\begin{proof} Let $1\leq k\leq \ell$ be such that $v\in U_k$.  Since $v\notin V(G')$, $|U_k'|<|U_k|$, so it must be the case that $|U_k'|=2$.  Since $u\nsim_G u'$, at most one of $u$ or $u'$ can be in $U'_k$, so $|U_k'\setminus \{u,u'\}|\geq 1$. We can now take $x$ to be any element of $U_k'\setminus \{u,u'\}$.  
\end{proof}
We can now quickly finish the proof of Lemma \ref{lem:irrgraphsrepeat}. Let $x$ be as in Claim \ref{cl:irrgraphs1}.  Since $x\sim_G v$ and $\{u,u'\}\cap  \{x,v\}=\emptyset$, we have $ux\in E$ and $u'x\notin E$ (since $uv\in E$ and $u'v\notin E$). This proves $u\nsim_{G'}u'$, as desired.  
\end{proof}

Our next lemma shows that deleting a $\sim_G$-class from a graph $G$ cannot decrease the total number of equivalence classes by too much.  In our proof, we will use Notation \ref{not:01nbrs}. 

\begin{lemma}\label{lem:deletesim}
Suppose $\ell \geq 4$ is an integer, $G=(U,E)$ is a graph, and $U_1,\ldots, U_{\ell}$ is any enumeration of the $\sim_G$-classes of $G$.  Then $G'=G[U\setminus U_1]$ has at least $(\ell-1)/2$-many $\sim_{G'}$-classes.
\end{lemma}
\begin{proof}
Let $V_1,\ldots, V_t$ enumerate the $\sim_{G'}$-classes of $G'$, and suppose towards a contradiction $t<(\ell-1)/2$. By Lemma \ref{lem:coarsen} and the definition of $G'$, each $V_i$ is a union of sets from $\{U_2,\ldots, U_{\ell}\}$.  Since $t<(\ell-1)/2$, we have by the Pigeonhole Principle that for some $1\leq i_*\leq t$, $V_{i_*}$ contains at least three distinct $\sim_G$-classes.  Let $2\leq \alpha_1,\alpha_2,\alpha_3\leq \ell$ be pairwise distinct indices such that $U_{\alpha_1}\cup U_{\alpha_2}\cup U_{\alpha_3}\subseteq V_{i_*}$.  

Fix $u_{\alpha_1}\in U_{\alpha_1}$, $u_{\alpha_2}\in U_{\alpha_2}$, and $u_{\alpha_3}\in U_{\alpha_3}$. For each $1\leq i<j\leq 3$, we have $u_{\alpha_i}\nsim_Gu_{\alpha_j}$. Consequently, there exist  $a_{ij}\in V(G)\setminus \{u_{\alpha_i},u_{\alpha_j}\}$ and $\tau(ij)\in \{0,1\}$ such that $a_{ij}u_{\alpha_i}\in E^{\tau(ij)}$ and such that $a_{ij}u_{\alpha_j}\in E^{1-\tau(ij)}$.  Since $u_{\alpha_i}\sim_{G'}u_{\alpha_j}$, we must have $a_{ij}\in V(G)\setminus V(G')$.  Let $A=\{a_{ij}: 1\leq i<j\leq 3\}$. By the preceding observation, $A\cap V(G')=\emptyset$.  By definition of $G'$, this implies $A\subseteq U_1$.  Since $u_{\alpha_1},u_{\alpha_2},u_{\alpha_3}\in V(G')$, this implies $A\cap \{u_{\alpha_1},u_{\alpha_2},u_{\alpha_3}\}=\emptyset$. 

Since $A$ is contained in a single $\sim_G$-class and $A\cap \{u_{\alpha_1},u_{\alpha_2},u_{\alpha_3}\}=\emptyset$, we have that for each $1\leq i\leq 3$,  there is $\sigma(i)\in \{0,1\}$ such that $u_{\alpha_i} a\in E^{\sigma(i)}$ for all $a\in A$.  By the Pigeonhole Principle, there are $1\leq i< j\leq 3$ such that $\sigma(i)=\sigma(j)$. This means $a_{ij}u_{\alpha_i}$ and $a_{ij}u_{\alpha_j}$ are both in $E^{\sigma(i)}=E^{\sigma(j)}$. However, this is impossible, since either $\sigma(i)=\tau(ij)$ and $a_{ij}u_{\alpha_j}\notin E^{\sigma(i)}$, or $\sigma(i)=1-\tau(ij)$ and $a_{ij}u_{\alpha_i}\notin E^{\sigma(i)}$.
\end{proof}

Our next lemma says that any graph $G$ with more than $C$-many $\sim_G$-classes contains a small almost prime induced subgraph witnessing this. 

\begin{lemma}\label{lem:irrgraphs2}
Let $C\geq 1$ be an integer. Suppose $G=(V,E)$ is a graph with more than $C$-many $\sim_G$-classes.  Then there exists an almost prime induced subgraph $G'$ of $G$, such that $G'$ has more than $C$-many $\sim_{G'}$-classes and such that $|V(G')|\leq 4C+2$.   
\end{lemma}
\begin{proof}
Let $\ell$ be the number of $\sim_G$-classes in $G$.  We first construct an integer $t\geq 1$ and sequence of graphs $G_0,\ldots, G_t$ and integers $\ell_0,\ldots, \ell_t$ via an inductive process as follows.  

Step 0: Set $G_0=G$ and let $\ell_0=\ell$.  If $\ell_0\leq 2C+1$,  set $t=0$ and end the construction. Otherwise, we have $2C+1<\ell_0\leq \ell-0$. Go to the next step.

Step $m+1$: Suppose $m\geq 0$, and assume by induction we have constructed an induced subgraph $G_m$ of $G$ with $\ell_m$-many $\sim_{G_m}$-classes, for some $\ell_m$ satisfying $2C+1<\ell_m\leq \ell-m$.  Let $U$ be any $\sim_{G_m}$-class of $G_m$, set $G_{m+1}=G_m[V(G_m)\setminus U]$, and let $\ell_{m+1}$ be the number of $\sim_{G_{m+1}}$-classes.  The definition of $G_{m+1}$ and Lemma \ref{lem:coarsen} imply $\ell_{m+1}\leq \ell_m-1\leq \ell-(m+1)$, where the last inequality is by our induction hypothesis $\ell_m\leq \ell-m$. On the other hand, by Lemma \ref{lem:deletesim}, $\ell_{m+1}\geq (\ell_m-1)/2>C$, where the last inequality is by our induction hypothesis $2C+1<\ell_m$.  If $\ell_{m+1}\leq 2C+1$, set $t=m+1$ and end the construction. Otherwise, we have $2C+1<\ell_{m+1}\leq \ell-(m+1)$. Go to the next step.  

This construction will clearly halt after at most $\ell-2C$ steps.  At the end, we will have found $G_t$, an induced subgraph of $G$, with $\ell_t$-many $\sim_{G_t}$-classes, for some $\ell_t$ satisfying $C<\ell_t\leq 2C+1$.  By Lemma \ref{lem:irrgraphsrepeat}, $G_t$ contains an induced, almost prime subgraph $G'$ with $\ell_t$-many $\sim_{G'}$-classes. By Observation \ref{ob:sizeAP}, $|V(G')|\leq 2\ell_t\leq 2(2C+1)=4C+2$. Thus $G'$ satisfies the desired conclusions. 
\end{proof}

 We now prove Theorem \ref{thm:irrgraphs2}, which we repeat below for the convenience of the reader.
 
\begin{theorem}\label{thm:irrgraphs2repeat}
Let $C\geq 1$ be an integer.  Recalling $\calG^{(2)}$ denotes the class of all finite graphs, define
$$
\calH_C=\{G\in \calG^{(2)}: G\text{ has at most $C$-many $\sim_G$-classes}\}.
$$
Then $\calH_C=  \Forb(\calF_{C+1})$, where $\calF_{C+1}$ is from Definition \ref{def:fc2}.  
\end{theorem}
\begin{proof}
 Suppose $G$ is a finite graph and $G\notin \Forb(\calF_{C+1})$. Then $G$ contains an element $F$ of $\calF_{C+1}$ as an induced subgraph. By definition of $\calF_{C+1}$, $F$ has at least $(C+1)$-many $\sim_F$-classes.  Since $F$ is an induced subgraph of $G$, Lemma \ref{lem:coarsen} implies  $G$ must have at least $(C+1)$-many $\sim_G$-classes, so $G\notin \calH_C$.

Suppose now $G$ is a finite graph and $G\notin \calH_C$.  By definition of $\calH_C$, $G$ has more than $C$-many $\sim_G$-classes.  By Lemma \ref{lem:irrgraphs2}, $G$ contains an almost prime induced subgraph $G'$ satisfying $|V(G')|\leq 4C+2$ and such that $G'$ has more than $C$-many $\sim_{G'}$-classes.  Setting $K=C+1$, we see that $G'$ has at least $K$-many $\sim_{G'}$-classes, and $|V(G')|\leq 4C+2\leq 4K$.  Thus $G'\in \calF_{K}=\calF_{C+1}$.  Since $G'$ is an induced subgraph of $G$, this implies $G\notin \Forb(\calF_{C+1})$.  
\end{proof}

\subsection{Proof of Theorem \ref{thm:irrgraphs2hg}}\label{ss:ap2}

In this subsection, we prove the $3$-graph analogue of Theorem \ref{thm:irrgraphs2}. This requires several lemmas in analogy to the preceding subsection.  We will use Notation \ref{not:01nbrs}  throughout.  

First, we will use the fact that, given a $3$-graph $H$, and an  induced sub-$3$-graph $H'$ of $H$, every $\sim_{H'}$-class in $H'$ is a union of intersections of $\sim_H$-classes with $V(H')$.

\begin{lemma}\label{lem:unionclasses}
Suppose $H=(V,E)$ is a $3$-graph and $U_1,\ldots, U_t$ is an enumeration of the $\sim_H$-classes in $H$.  Assume $H'=(V',E')$ is an induced sub-$3$-graph of $H$. Then every $\sim_{H'}$-class  is a union of sets from $\{U_i\cap V': 1\leq i\leq t\}$.
\end{lemma}

We leave the proof of Lemma \ref{lem:unionclasses} as an exercise, as it is very similar to Lemma \ref{lem:coarsen}.  We now prove Lemma \ref{lem:irr3graphs2}, repeated below for convenience. 

\begin{lemma}\label{lem:irr3graphs2repeat}
Suppose $H=(V,E)$ is a $3$-graph and $\ell$ is the number of $\sim_H$-classes in $H$. Then $H$ contains an induced almost prime sub-$3$-graph $H'$ with $\ell$-many $\sim_{H'}$-classes.   
\end{lemma}
\begin{proof}
Let $U_1,\ldots, U_{\ell}$ enumerate the $\sim_H$-classes of $H$.  We define a subset $U'_i$ of $U_i$ for each $1\leq i\leq \ell$ as follows.  If $|U_i|\leq 2$, set $U_i'=U_i$, and otherwise, let $U_i'$ be any three-element subset of $U_i$.  Note that for any $i\in [\ell]$, if $|U_i'|<|U_i|$, then $|U_i'|=3$.

Since each $U_i'$ has size at most $3$, it suffices to show $U_1',\ldots, U_{\ell}'$ are exactly the $\sim_{H'}$-classes of $H':=H[U_1'\cup \ldots \cup U_{\ell}']$.   By Lemma \ref{lem:unionclasses}, each $U_i'$ is contained in a single $\sim_{H'}$-class.  Thus it suffices to show that for each $1\leq i\neq j\leq \ell$ and each $u\in U'_i$ and $u'\in U_j'$, $u\nsim_{H'}u'$. 

Fix $1\leq i\neq j\leq \ell$, along with $u\in U'_i$ and $u'\in U'_j$.  Since $u\nsim_H u'$,  there exist $x\neq y\in V(H)\setminus \{u,u'\}$ and $\tau\in \{0,1\}$ such that $xyu\in E^{\tau}$ and $xyu'\in E^{1-\tau}$.

\begin{claim}\label{cl:findsim}There exist vertices $x'\neq y' \in V(H')\setminus \{u,u'\}$ satisfying $x'\sim_{H}x$  and  $y'\sim_Hy$.
\end{claim}
\begin{proof}
If $x$ and $y$ are both in $V(H')$, just let $x'=x$ and $y'=y$.  This leaves us with the case where at least one of $x$ or $y$ is in $V(H)\setminus V(H')$.  After possibly relabeling, we may assume $x\in V(H)\setminus V(H')$.

Let $1\leq k\leq \ell$ be such that $x\in U_k$.  Since $x\notin V(H')$, we have $|U_k'|<|U_k|$, so by construction, $|U'_k|=3$.  Since $u$ and $u'$ are in distinct $\sim_H$-classes, at most one of them is in $U_k'$, and consequently, $|U_k'\setminus \{u,u'\}|\geq |U_k'|-1\geq 2$.  Let $x_1$ and $x_2$ be two distinct elements of $U_k'\setminus \{u,u'\}$.   We now choose $x'$ and $y'$ in cases.  

If $y\sim_H x$, set $x'=x_1$ and $y'=x_2$. 

If $y\nsim_H x$ and $y\in V(H')$, set $y'=y$ and $x'=x_1$.  
 
We are left with the case where $y\nsim_H x$ and $y\in V(H)\setminus V(H')$. Let $1\leq s\leq \ell$ be such that $y\in U_s$.  Since $y\notin V(H')$, $|U_s'|<|U_s|$, so by construction, $|U'_s|=3$.  Since $s\neq k$, $y\in U_s$, and $x_1,x_2\in U_k$,  we have $U_s'\cap \{x_1,x_2\}=\emptyset$.  Combining with the fact $u\nsim_Hu'$, we have $|U_s'\setminus \{u,u',x_1,x_2\}|=|U_s'\setminus \{u,u'\}|\geq |U_s'|-1\geq 2$. We can now choose $y'$ to be any element of $U_s'\setminus \{u,u',x_1,x_2\}$, and set $x'=x_1$.
\end{proof}

We can now quickly finish the proof of Lemma \ref{lem:irr3graphs2repeat}. Let $x',y'$ be as in Claim \ref{cl:findsim}. By construction, $x'\neq y'\in V(H')\setminus \{u,u'\}$, and $x'\sim_H x$ and $y'\sim_H y$. Thus $x'y'u\in E^{\tau}$ and $x'y'u'\in E^{1-\tau}$.  This proves $u\nsim_{H'}u'$, as desired. 
\end{proof}

We next prove a $3$-graph analogue of Lemma \ref{lem:deletesim}, which will tell us that for any $3$-graph $H$, there exists a $\sim_H$-class we can delete without decreasing the number of $\sim$-classes too drastically.  This lemma is significantly more difficult than its graph theoretic analogue. 

 \begin{lemma}\label{lem:deletesimhg}
Let $K$ be the $6$-color Ramsey number $R(3,3,3,3,3,3)$, and let $L=K+2$.   Suppose $\ell>L+1$ is an integer, $H=(U,E)$ is a $3$-graph, and $U_1,\ldots, U_{\ell}$ is an enumeration of the $\sim_H$-classes in  $H$.  Then there exists some $1\leq i\leq \ell$ such that $H'=H[U\setminus U_i]$ has at least $(\ell-1)/L$-many $\sim_{H'}$-classes.
\end{lemma}
\begin{proof}
Suppose towards a contradiction there exists a $3$-graph $H=(U,E)$, with $\sim_H$-classes $U_1,\ldots, U_{\ell}$, such that for all $1\leq i\leq \ell$, $H_i=H[U\setminus U_i]$ has less than $(\ell-1)/L$-many $\sim_{H_i}$-classes.

Let $H_1=H[U\setminus U_1]$, and let $V_1,\ldots, V_t$ be an enumeration of the $\sim_{H_1}$-classes in $H_1$.   By definition of $H_1$ and Lemma \ref{lem:unionclasses},  each $V_i$ is a union of sets from $\{U_2,\ldots, U_{\ell}\}$.  By assumption, $t<(\ell-1)/L<(\ell-1)/2$, so by the Pigeonhole Principle, there is some $V_{i_*}$ containing two distinct $\sim_H$-classes.  After relabeling if necessary, let us assume that $U_{2}\cup U_{3} \subseteq V_{i_*}$.  

Now let $H_2=H[U\setminus U_2]$, and let $W_1,\ldots, W_s$ enumerate the $\sim_{H_2}$-classes in $H_2$.  By  definition of $H_2$ and Lemma \ref{lem:unionclasses}, each $W_j$ is a union of sets from $\{U_1,U_3,\ldots, U_{\ell}\}$.  By assumption, $s<(\ell-1)/L$, so by the Pigeonhole Principle, there must be some  $W_{j_{*}}$ containing at least $L$ many sets from $\{U_1,U_3,\ldots, U_{\ell}\}$.    Since $L-2\geq K$, $W_{j_{*}}$ contains at least  $K$ many sets from $\{U_1,U_3,\ldots, U_{\ell}\}\setminus  \{U_1,U_3\}$. Thus we may choose $\{\beta_1,\ldots, \beta_{K}\}\subseteq \{4,\ldots, \ell\}$ a set of $K$ distinct indices such that $U_{\beta_1}\cup \ldots \cup U_{\beta_{K}}\subseteq W_{j_*}$.   

For each $1\leq i\leq K$, fix $u_{\beta_i}\in U_{\beta_i}$.   For each $1\leq i<j\leq K$, we have $u_{\beta_i}\nsim_Hu_{\beta_j}$. Consequently, there exist $\tau_{ij}\in \{0,1\}$ and $a_{ij}\neq b_{ij}\in V(H)\setminus \{u_{\beta_i},u_{\beta_j}\}$ such that 
\begin{align}
\label{al:tij}&a_{ij}b_{ij}u_{\beta_i}\in E^{\tau_{ij}}\text{ and }\\
\label{al:tij2}&a_{ij}b_{ij}u_{\beta_j}\in E^{1-\tau_{ij}}.
\end{align}
 On the other hand, for each $1\leq i<j\leq K$, $u_{\beta_i}\sim_{H_2}u_{\beta_j}$, so (\ref{al:tij}) and (\ref{al:tij2}) imply one of $a_{ij}$ or $b_{ij}$ must be in $U_2$.  After relabeling if necessary, let us assume $a_{ij}\in U_{2}$ for all $1\leq i<j\leq K$.  After this relabeling, we have the following fact, which we will use later in the proof.
\begin{align}\label{al:aij}
\text{ For all $1\leq i<j\leq K$ and $1\leq i'<j'\leq K$, $a_{ij}\sim_Ha_{i'j'}$.}
\end{align}
We now show that at least one of the $b_{ij}$ must avoid $U_1$, $U_2$, and $U_3$.

\begin{claim}\label{cl:bint}
The set $B=\{ b_{ij}: 1\leq i<j\leq K \}$  intersects some $\sim_H$-class from $\{U_4,\ldots, U_{\ell}\}$. 
\end{claim}
\begin{proof} Suppose towards a contradiction $B\subseteq U_1\cup U_2\cup U_3$.  Then we can define a $6$-coloring of the edges of the complete graph on vertex set $[K]$ as follows.
\begin{align*}
{[K]\choose 2}&=\Big\{ij\in {[K]\choose 2}: i<j, \tau_{ij}=0, b_{ij}\in U_1\Big\} \cup \Big\{ij\in {[K]\choose 2}: i<j, \tau_{ij}=1, b_{ij}\in U_1\Big\} \\
&\cup \Big\{ij\in {[K]\choose 2}: i<j, \tau_{ij}=0, b_{ij}\in U_2\Big\} \cup \Big\{ij\in {[K]\choose 2}: i<j, \tau_{ij}=1, b_{ij}\in U_2\Big\} \\
&\cup \Big\{ij\in {[K]\choose 2}: i<j, \tau_{ij}=0, b_{ij}\in U_3\Big\} \cup \Big\{ij\in {[K]\choose 2}: i<j, \tau_{ij}=1, b_{ij}\in U_3\Big\}.
\end{align*}
By our choice of $K$, there exists a monochromatic triangle. In other words, there exist $\sigma\in \{0,1\}$, $\mu\in \{1,2,3\}$, and $1\leq i_1<i_2<i_3\leq K$ such that $\tau_{i_1i_2}=\tau_{i_1i_3}=\tau_{i_2i_3}=\sigma$ and such that $b_{i_1i_2},b_{i_1i_3},b_{i_2i_3}\in U_{\mu}$. By (\ref{al:tij}) and (\ref{al:tij2}),  this tells us
\begin{align}
\label{al:aibj1}&a_{i_1i_2}b_{i_1i_2}u_{\beta_{i_2}}\in  E^{1-\sigma}\text{ and }\\
\label{al:aibj2}&a_{i_2i_3}b_{i_2i_3}u_{\beta_{i_2}}\in  E^{\sigma}.
\end{align}
We also know by construction that
\begin{align}\label{al:u2}
\text{ $u_{\beta_{i_2}}\notin \{a_{i_1i_2},b_{i_1i_2},a_{i_2i_3},b_{i_2i_3}\}$, $a_{i_1i_2}\neq b_{i_1i_2}$, and $a_{i_2i_3}\neq b_{i_2i_3}$. }
\end{align}
Clearly (\ref{al:aibj1}) and (\ref{al:aibj2}) imply   $\{a_{i_1i_2},b_{i_1i_2}\}\neq \{a_{i_2i_3},b_{i_2i_3}\}$, and consequently, we have that either $a_{i_1i_2}\neq b_{i_2i_3}$ or $a_{i_2i_3}\neq b_{i_1i_2}$.

Suppose first $a_{i_1i_2}\neq b_{i_2i_3}$.  Then, recalling $a_{i_1i_2}\sim_H a_{i_2i_3}$ (by  (\ref{al:aij})), we have by (\ref{al:u2}) and  (\ref{al:aibj2}) that $a_{i_1i_2}b_{i_2i_3}u_{\beta_{i_2}}\in E^{\sigma}$.  However, since $a_{i_1i_2}b_{i_1i_2}u_{\beta_{i_2}}\in E^{1-\sigma}$ (by (\ref{al:aibj1})), this implies $b_{i_1i_2}\nsim_H b_{i_2i_3}$, contradicting that $b_{i_1i_2},b_{i_2i_3}\in U_{\mu}$. 
 
 We are left with the case that  $a_{i_2i_3}\neq b_{i_1i_2}$. In this case, $a_{i_1i_2}\sim_H a_{i_2i_3}$,  (\ref{al:u2}), and (\ref{al:aibj1}) imply $a_{i_2i_3}b_{i_1i_2}u_{\beta_{i_2}}\in E^{1-\sigma}$. However, since  $a_{i_2i_3}b_{i_2i_3}u_{\beta_{i_2}}\in E^{\sigma}$ (by (\ref{al:aibj2})), this implies $b_{i_1i_2}\nsim_H b_{i_2i_3}$, contradicting that $b_{i_1i_2},b_{i_2i_3}\in U_{\mu}$.  This completes our proof of Claim \ref{cl:bint}.
\end{proof}

By Claim \ref{cl:bint}, there exist $1\leq i<j\leq K$ such that  $b_{ij}\in V(H)\setminus( U_1\cup U_{2}\cup U_{3})$. Fix any $u_3\in U_3$.  We now observe a certain collection of vertices are distinct.  Since $b_{ij}\notin U_3$, we know $b_{ij}\neq u_3$.  Recalling that $a_{ij}\in U_2$, we know $a_{ij}\neq u_3$ is also true. Recall that by construction, $b_{ij},a_{ij},u_{\beta_i},u_{\beta_j}$ are pairwise distinct.  Finally,  $\beta_i,\beta_j>3$ holds by construction, so, since $u_{\beta_i}\in U_{\beta_i}$ and $u_{\beta_j}\in U_{\beta_j}$, we have that $u_{\beta_i},u_{\beta_j}, a_{ij},u_3$ are pairwise distinct.  Combining all these together, we see 
\begin{align}\label{al:vdis}
\text{the vertices $b_{ij},a_{ij},u_{\beta_i},u_{\beta_j}, u_3$ are pairwise distinct. }
\end{align}

We now recall that $U_2$ and $U_3$ are contained in a single $\sim_{H_1}$-class. Consequently, since $a_{ij}\in U_2$ and $u_3\in U_3$,  we have that $a_{ij}\sim_{H_1}u_3$. Thus, by (\ref{al:vdis}) and because $u_{\beta_i},u_{\beta_j},b_{ij}\in V(H)\setminus U_1$, we have that there exist $\rho,\rho'\in \{0,1\}$ such that $a_{ij}b_{ij}u_{\beta_i}, u_3b_{ij}u_{\beta_i}\in E^{\rho}$ and $ a_{ij}b_{ij}u_{\beta_j}, u_3b_{ij}u_{\beta_j}\in E^{\rho'}$.  By (\ref{al:tij}) and (\ref{al:tij2}),  $\rho=\tau_{ij}$ and $\rho'=1-\tau_{ij}$.  Thus $u_3b_{ij}u_{\beta_i}\in E^{\tau_{ij}}$ while $u_3b_{ij}u_{\beta_j}\in E^{1-\tau_{ij}}$.  Since $u_3\neq b_{ij}\in V(H)\setminus U_2$, this implies $u_{\beta_i}\nsim_{H_2}u_{\beta_j}$, a contradiction. 
\end{proof}

We now prove that if a $3$-graph $H$ has more than $C$-many $\sim_H$-classes, then we can find an induced sub-$3$-graph of bounded size witnessing this.  

\begin{lemma}\label{lem:irrgraphs2hg}
Let $K$ be the $6$-color Ramsey number $R(3,3,3,3,3,3)$,  let $L=K+2$, and let $N=3L+3$.  Suppose $C\geq 1$ is an integer and $H=(V,E)$ is a $3$-graph with more than $C$-many $\sim_H$-classes.  Then there exists an almost prime induced sub-$3$-graph $H'$ of $H$ with more than $C$-many $\sim$-classes and at most $CN$ vertices.     
\end{lemma}
\begin{proof}
Let $\ell>C$ be an integer, and let $H=(V,E)$ be a $3$-graph with $\ell$-many $\sim_H$-classes. We first construct an integer $t$ and a sequence of $3$-graphs $H_0,\ldots, H_t$ and integers $\ell_0,\ldots, \ell_t$ inductively as follows.

Step 0: Set $H_0=H$ and let $\ell_0=\ell$.  If $\ell_0\leq CL+1$,  set $t=0$ and end the construction. Otherwise, we have $CL+1<\ell_0 \leq \ell-0$.  Go to the next step.

Step $m+1$: Suppose $m\geq 0$ is an integer, and assume by induction we have constructed an induced sub-$3$-graph $H_m$ of $H$ with $\ell_m$-many $\sim_{H_m}$-classes, for some integer $\ell_m$ satisfying $CL+1<\ell_m\leq \ell-m$.  By Lemma \ref{lem:deletesimhg}, there exists a $\sim_{H_m}$-class $U$ such that, if we set $H_{m+1}=H_m[V(H_m)\setminus U]$, and let $\ell_{m+1}$ be the number of $\sim_{H_{m+1}}$-classes in $H_{m+1}$, then $\ell_{m+1}\geq (\ell_m-1)/L$.  By definition of $H_{m+1}$ and Lemma \ref{lem:unionclasses}, $\ell_{m+1}\leq \ell_m-1\leq \ell-(m+1)$, where the last inequality is by our induction hypothesis $\ell_m\leq \ell-m$. On the other hand, by our choice of $H_{m+1}$, $\ell_{m+1}\geq (\ell_m-1)/L>C$, where the last inequality is by our induction hypothesis $CL+1<\ell_m$.  If $\ell_{m+1}\leq CL+1$, set $t=m+1$ and end the construction. Otherwise, $CL+1<\ell_{m+1}\leq \ell_m-1\leq \ell-(m+1)$.  Go to the next step.  

Clearly this construction will halt after at most $\ell-(CL+1)$ steps.  At the end, we will have found $H_t$, an induced sub-$3$-graph of $H$, with $\ell_t$-many $\sim_{H_t}$-classes for some $\ell_t$ satisfying $C<\ell_t\leq CL+1$.  By Lemma \ref{lem:irr3graphs2repeat}, $H_t$ contains an induced, almost prime sub-$3$-graph $H'$ with $\ell_t$-many $\sim_{H'}$-classes. By Observation \ref{ob:irr3graphs}, $|V(H')|\leq 3\ell_t\leq 3CL+3\leq CN$ vertices.  This $H'$ satisfies the desired conclusions. 
\end{proof}

 We now prove  Theorem \ref{thm:irrgraphs2hg} (repeated below as Theorem \ref{thm:irrgraphs2hgrepeat}), which shows the class of finite $3$-graphs with at most $C$-many $\sim$-classes is characterized by omitting $\calF_{C+1}$ from Definition \ref{def:fc}.  

\begin{theorem}\label{thm:irrgraphs2hgrepeat}
Let $C\geq 1$ be an integer. Recalling that $\calG^{(3)}$ denotes the class of all finite $3$-graphs, define 
$$
 \calH_C=\{H\in \calG^{(3)}: \text{$H$ has at most $C$-many $\sim_H$-classes}\}.
 $$
 Then $\calH_C=\Forb(\calF_{C+1})$, where $\calF_{C+1}$ is from Definition \ref{def:fc}.   
\end{theorem}
\begin{proof}
 Suppose $H$ is a finite $3$-graph and $H\notin \Forb(\calF_{C+1})$. Then $H$ contains an element $F$ of $\calF_{C+1}$ as an induced sub-$3$-graph.  By definition of $\calF_{C+1}$, $F$ has more than $C$-many $\sim_F$-classes.  Lemma \ref{lem:unionclasses} implies the number of $\sim_H$-classes in $H$ is at least the number of $\sim_F$-classes in $F$, so  $H\notin \calH_C$.

Suppose conversely $H$ is a finite $3$-graph and $H\notin \calH_C$. Then $H$ has more than $C$-many $\sim_H$-classes. By Lemma \ref{lem:irrgraphs2hg}, $H$ contains an almost prime induced sub-$3$-graph $H'$ such that $H'$ has more than $C$-many $\sim_{H'}$-classes and such that $|V(H')|\leq CN$, where $N$ is as in Lemma \ref{lem:irrgraphs2hg}.  Since $H'$ has at least $C+1$-many $\sim_{H'}$-classes and $|V(H')|\leq CN$, we have $H'\in \calF_{C+1}$ by definition of $\calF_{C+1}$. This shows $H\notin \Forb(\calF_{C+1})$.
\end{proof}

\section{Proofs of Theorems \ref{thm:prime} and \ref{thm:prime3}}
 
This section contains the proofs of a Ramsey-theoretic result for almost prime graphs (Theorem \ref{thm:prime}), along with an analogous result for almost prime $3$-graphs (Theorem \ref{thm:prime3}).  Both proofs will heavily rely on Notation \ref{not:01nbrs}. 

\subsection{Proof of Theorem \ref{thm:prime}}\label{ss:ramsey1}  We begin by stating a lemma which, given a sequence of pairs of vertices, extracts a subsequence where certain edge relationships behave uniformly.  This lemma is not difficult to prove directly, and is also a special case of a standard result in model theory (see e.g. Theorem 2.4(2) in \cite{Shelah.1990o5n}).  For these reasons we omit the proof.

\begin{lemma}\label{lem:prime}
For all integers $k\geq 1$ there exists an integer $N\geq 1$ such that the following holds.  Suppose $G=(V,E)$ is a graph and $(x_1,y_1),\ldots, (x_N,y_N)$ are pairwise distinct elements of $V^2$ such that for each $1\leq i\leq N$, $|\{x_i,y_i\}|=2$, and such that for all $1\leq i\neq j\leq N$, $x_i\neq x_j$ and $y_i\neq y_j$.   Then there exists $\alpha\in \{0,1\}$ and a subsequence $1\leq \beta_1<\ldots<\beta_k\leq N$ such that $|\{x_{\beta_1},y_{\beta_1},\ldots,x_{\beta_k},y_{\beta_k}\}|=2k$, and such that for all $1\leq v<u\leq k$, $x_{\beta_u}y_{\beta_v}\in E^{\alpha}$.
\end{lemma}

We now prove Theorem \ref{thm:prime}.  In it, we will use the notation $x\gg y$ to mean $x$ is sufficiently large compared to $y$.

\vspace{2mm}
  
\noindent{\bf Proof of Theorem \ref{thm:prime}.}
Fix $k\geq 1$ and let $n_1\gg n_2\gg n_3\gg k$.   Assume $G=(V,E)$ is an almost prime graph with $|V|\geq n_1$.  We will use throughout the proof that the following holds because $G$ is almost prime.
\begin{align}\label{al:irredg}
\text{for any $V'\subseteq V$ satisfying $|V'|>2$, there exist $v,v'\in V'$ such that $v\nsim_Gv'$.}
\end{align}

\underline{Step $1$:} Since $|V|\geq n_1$, (\ref{al:irredg}) implies there exist $v_0,v_1\in V$ with $v_0\nsim_Gv_1$.  By definition of $\sim_G$, this means there exists $x_1\in V\setminus\{v_0,v_1\}$  such that $x_1v_1\in E$ and $x_1v_0\notin E$, or vice versa.  After relabeling, we may assume $x_1v_1\in E$ and $x_1v_0\notin E$.  Let $\alpha(1)\in \{0,1\}$ be such that 
$$
|N_{G^{\alpha(1)}}(x_1 )\cap (V\setminus \{v_0,v_1,x_1 \})|\geq \frac{|V\setminus \{v_0,v_1,x_1 \}|}{2},
$$
 and let $\beta(1)$ be such that $\{0,1\}=\{\alpha(1),\beta(1)\}$.  Set $z_1=v_{\beta(1)}$, and  set
$$
Z_1= N_{G^{\alpha(1)}}(x_1 )\cap (V\setminus \{x_1,z_1 \}).
$$
Note $|Z_1|\geq \frac{|V\setminus\{v_0,v_1,x_1\}|}{2} \geq \frac{|V|}{4}$, where the last inequality is because $|V|$ is sufficiently large.  Note that by construction $x_1\neq z_1$ and $Z_1\cap \{x_1, z_1\}=\emptyset$. 

\underline{Step $\ell+1$:} Suppose $1\leq \ell<n_2$ and assume by induction we have constructed the following:
\begin{itemize}
\item tuples $(x_1,\ldots,x_\ell), (z_1,\ldots, z_\ell) \in V^{\ell}$, 
\item pairs $(\alpha(1),\beta(1))\ldots, (\alpha(\ell),\beta(\ell))\in \{0,1\}^{2}$, and
\item a set  $Z_\ell\subseteq V$, 
 \end{itemize} 
 such that the following hold.
\begin{enumerate}[(a)]
\item For all $1\leq i\leq \ell$, $\{\alpha(i),\beta(i)\}=\{0,1\}$,
\item  $|\{x_1, \ldots, x_{\ell}\}|=|\{z_1,\ldots, z_{\ell}\}|=\ell$, 
\item For all $1\leq i\leq \ell$, $x_i z_i\in E^{\beta(i)}$,
\item For all $1\leq i<j\leq \ell$,  $x_iz_j\in E^{\alpha(i)}$,
\item $Z_\ell\subseteq \Big(\bigcap_{i=1}^\ell N_{G^{\alpha(i)}}(x_i ) \Big)\setminus \{x_1,\ldots, x_\ell, z_1,\ldots, z_\ell\}$ and $|Z_\ell|\geq |V|/4^\ell$.
\end{enumerate}
Since $|V|\geq n_1\gg n_2>\ell$, $|Z_\ell|\geq |V|/4^\ell$ implies $|Z_\ell|>2$. Consequently, by (\ref{al:irredg}), there exist $w_0,w_1\in Z_\ell$ such that $w_0\nsim_G w_1$. This means there exists $x_{\ell+1} \in V\setminus \{w_1,w_0\}$ so that $x_{\ell+1} w_1\in E$ and $x_{\ell+1} w_0\notin E$, or vice versa. After relabeling, we may assume $x_{\ell+1} w_1\in E$ and $x_{\ell+1} w_0\notin E$.  Using induction hypothesis (e), and the fact $w_0,w_1\in Z_{\ell}$, we see that  it must be the case that $x_{\ell+1}\notin \{x_1,\ldots, x_{\ell}\}$.  Combining with induction hypothesis (b), we have $|\{x_1,\ldots, x_{\ell+1}\}|=\ell+1$.  Let $\alpha(\ell+1)\in \{0,1\}$ be such that 
\begin{align}\label{al:zl}
|N_{G^{\alpha(\ell+1)}}(x_{\ell+1} )\cap (Z_\ell\setminus \{x_{\ell+1}, w_{0},w_1\})|\geq \frac{|Z_\ell\setminus \{x_{\ell+1} ,w_{0},w_1\}|}{2}.
\end{align}
Let $\beta(\ell+1)$ be such that $\{\alpha(\ell+1),\beta(\ell+1)\}=\{0,1\}$. Set $z_{\ell+1}=w_{\beta(\ell+1)}$, and let 
$$
Z_{\ell+1}= N_{G^{\alpha(\ell+1)}}(x_{\ell+1} )\cap (Z_{\ell}\setminus \{x_{\ell+1}, z_{\ell+1}\}).
$$
Since $z_{\ell+1}\in Z_{\ell}$, induction hypotheses (b) and (e) imply $|\{ z_1,\ldots,  z_{\ell+1}\}|= \ell+1 $. By  construction, $x_{\ell+1}z_{\ell+1} \in E^{\beta(\ell+1)}$, so combining with induction hypothesis (c), we have that for all $1\leq i\leq \ell+1$, $x_iz_i\in E^{\beta(i)}$. Since $z_{\ell+1}\in Z_{\ell}$, induction hypotheses (d) and (e) imply that for all $1\leq i<j\leq \ell+1$, $x_iz_j\in E^{\alpha(i)}$.  By definition, and induction hypothesis (e), $Z_{\ell+1}\subseteq \Big(\bigcap_{i=1}^{\ell+1}N_{G^{\alpha(i)}}(x_i ) \Big)\setminus \{x_1,\ldots, x_{\ell+1}, z_1,\ldots, z_{\ell+1}\}$.  Further, 
$$
|Z_{\ell+1}| \geq \frac{|Z_\ell\setminus\{x_{\ell+1}, w_0,w_1\}|}{2}\geq \frac{1}{2}\left(\frac{|V|}{4^{\ell}}-3\right) \geq \frac{|V|}{4^{\ell+1}},
$$
where the first inequality is by (\ref{al:zl}), the second uses that $|Z_\ell|\geq |V|/4^\ell$, and the last uses that $|V|\geq n_1\gg n_2>\ell$.  This completes step $\ell+1$ of the construction.
  
Since $n_1\gg n_2$, we can perform $n_2$-many steps of this construction. After $n_2$ steps, we will have constructed
\begin{itemize}
\item sequences $(x_1,\ldots, x_{n_2})$ and $(z_1,\ldots, z_{n_2})\in V^{n_2}$, and 
 \item pairs  $(\alpha(1),\beta(1)),\ldots, (\alpha(n_2),\beta(n_2))\in \{0,1\}^2$
\end{itemize}
such that
\begin{itemize}
\item For all $1\leq i\leq n_2$, $\{\alpha(i),\beta(i)\}=\{0,1\}$,
\item  $|\{x_1, \ldots, x_{n_2}\}|=|\{z_1,\ldots, z_{n_2}\}|=n_2$ and for each $1\leq i\leq n_2$, $x_i\neq z_i$, and
\item For all $1\leq i<j\leq n_2$,  $x_i z_i\in E^{\beta(i)}$, $x_j z_j\in E^{\beta(j)}$, and $x_i z_j\in E^{\alpha(i)}$.
\end{itemize}
By the Pigeonhole Principle, there is a choice of $\alpha\neq \beta\in \{0,1\}$ such that 
$$
|\{i\in [n_2]: (\alpha(i),\beta(i))=(\alpha,\beta)\}|\geq \frac{n_2}{4}.
$$
Since $n_2\gg n_3$, we can choose an increasing sequence $1\leq i_1<\cdots<i_{n_3}\leq n_2$  such that for each $u\in [n_3]$, we have $(\alpha(i_u),\beta(i_u))=(\alpha,\beta)$.  This tells us that for all $1\leq u\leq n_3$, $x_{i_u}z_{i_u}\in E^{\beta}$ and for all $1\leq u<v\leq n_3$, $x_{i_u}z_{i_v}\in E^{\alpha}$.

Now applying Lemma \ref{lem:prime} to the pairs $(x_{i_1}, z_{i_1}),\ldots, (x_{i_{n_3}},z_{i_{n_3}})$ yields some $\tau\in \{0,1\}$ and a sequence $ j_1<\ldots<j_{k+1}$ with $\{j_1,\ldots, j_{k+1}\}\subseteq \{i_1,\ldots, i_{n_3}\}$ such that 
$$
|\{x_{j_1},\ldots, x_{j_{k+1}},z_{j_1},\ldots, z_{j_{k+1}}\}|=2(k+1),
$$
 and such that for all $1\leq v<u\leq k+1$, $x_{j_u} z_{j_v}\in E^{\tau}$. 

Suppose first $\alpha=1$ and $\beta=0$. If $\tau=1$, then for each $1\leq i\leq k$, let $a_i=x_{j_i}$ and $b_i=z_{j_i}$.  We then have $G[\{a_1,\ldots, a_k,b_1,\ldots, b_k\}]\in \Irr(k)$, since $a_ub_v\in E(G)$ if and only if $u\neq v$.  On the other hand, if $\tau=0$, then for each $1\leq i\leq k$, let $a_i=x_{j_i}$, and let $b_i=z_{j_{i+1}}$.  We then have $G[\{a_1,\ldots, a_k,b_1,\ldots, b_k\}]\in \Irr(k)$, since $a_ub_v\in E(G)$ if and only if $u\leq v$.

Suppose now we are in the case where $\alpha=0$ and $\beta=1$. If we also have $\tau=0$, then for each $1\leq i\leq k$, let $a_i=x_{j_i}$ and $b_i=z_{j_i}$.  In this case, we have $G[\{a_1,\ldots, a_k,b_1,\ldots, b_k\}]\in \Irr(k)$, since $a_ub_v\in E(G)$ if and only if $u=v$.  On the other hand, if $\tau=1$, then for each $1\leq i\leq k$, let $a_i=z_{j_i}$, and let $b_i=x_{j_i}$.  We then have $G[\{a_1,\ldots, a_k,b_1,\ldots, b_k\}]\in \Irr(k)$, since $a_ub_v\in E(G)$ if and only if $u\leq v$. This completes the proof.
\qed

\subsection{Proof of Theorem \ref{thm:prime3}}\label{ss:ramsey2}
We begin by stating an analogue of Lemma \ref{lem:prime}.  We will not prove this lemma, as it is again a special case of a standard model theoretic tool  (see  Theorem 2.4(2) in \cite{Shelah.1990o5n}).

\begin{lemma}\label{lem:prime3}
For all integers $k\geq 1$ there exists an integer $N=N(k)\geq 1$ such that the following holds.  Suppose $H=(V,E)$ is a $3$-graph and $(x_1,y_1,z_1),\ldots, (x_N,y_N,z_N)$ are pairwise distinct elements of $V^3$ such that $1\leq i\neq j\leq N$ implies $z_i\neq z_j$ and $(x_i,y_i)\neq (x_j,y_j)$, and such that for all $1\leq i\leq N$, $|\{x_i,y_i,z_i\}|=3$.  Then there exist $\alpha\in \{0,1\}$ and $1\leq i_1<\ldots<i_k\leq N$ such that for all $1\leq v<u\leq k$, $x_{i_u}y_{i_u}z_{i_v}\in E^{\alpha}$ and such that $\{x_{i_1},\ldots, x_{i_k},y_{i_1},\ldots, y_{i_k}\}\cap \{z_{i_1},\ldots, z_{i_k}\}=\emptyset$. 
\end{lemma}

We now prove Theorem \ref{thm:prime3}.
\vspace{2mm}
  
\noindent{\bf Proof of Theorem \ref{thm:prime3}.}
Fix $k\geq 1$ and let $n_1\gg n_2\gg n_3\gg k$.   Assume $H=(V,E)$ is an almost prime $3$-graph with $|V|\geq n_1$.  We will use throughout the proof that the following holds because $H$ is almost prime.
\begin{align}\label{al:irred}
\text{for any $V'\subseteq V$ satisfying $|V'|>3$, there exist $v,v'\in V'$ such that $v\nsim_Hv'$.}
\end{align}

\underline{Step $1$:} Since $|V|\geq n_1$, (\ref{al:irred}) implies there exist $v_0,v_1\in V$ with $v_0\nsim_Hv_1$.  By definition of $\sim_H$, this means there exist $x_1\neq y_1\in V\setminus\{v_0,v_1\}$  such that $x_1y_1v_1\in E$ and $x_1y_1v_0\notin E$, or vice versa.  After relabeling, we may assume $x_1y_1v_1\in E$ and $x_1y_1v_0\notin E$.  Let $\alpha(1)\in \{0,1\}$ be such that 
$$
|N_{H^{\alpha(1)}}(x_1y_1)\cap (V\setminus \{v_0,v_1,x_1,y_1\})|\geq \frac{|V\setminus \{v_0,v_1,x_1,y_1\}|}{2},
$$
 and let $\beta(1)$ be such that $\{0,1\}=\{\alpha(1),\beta(1)\}$.  Set $z_1=v_{\beta(1)}$ and set 
$$
Z_1=N_{H^{\alpha(1)}}(x_1y_1)\cap (V\setminus \{x_1,y_1,z_1\}).
$$
Note $|Z_1|>\frac{|V\setminus\{v_0,v_1,x_1,y_1\}|}{2}\geq\frac{|V|}{2}-2\geq |V|/4$, where the last inequality is because $|V|$ is sufficiently large.  Note that by construction $Z_1\cap \{x_1,y_1,z_1\}=\emptyset$. 

\underline{Step $\ell+1$:} Suppose $1\leq \ell<n_2$ and assume by induction we have constructed the following:
\begin{itemize}
\item tuples $(x_1,\ldots,x_\ell), (y_1,\ldots, y_\ell),(z_1,\ldots, z_\ell)\in V^{\ell}$, 
\item pairs $(\alpha(1),\beta(1))\ldots, (\alpha(\ell),\beta(\ell))\in \{0,1\}^{2}$, 
\item a set  $Z_\ell\subseteq V$, 
 \end{itemize} 
 such that the following hold.
\begin{enumerate}[(a)]
\item For all $1\leq i\leq \ell$, $\{\alpha(i),\beta(i)\}=\{0,1\}$,
\item For all $1\leq i\leq \ell$, $|\{x_i,y_i,z_i\}|=3$,
\item $|\{z_1,\ldots, z_{\ell}\}|=|\{(x_1,y_1),\ldots, (x_{\ell},y_{\ell})\}|=\ell$,
\item For all $1\leq i\leq \ell$,  $x_iy_iz_i\in E^{\beta(i)}$,
\item $Z_\ell\subseteq \Big(\bigcap_{i=1}^\ell N_{H^{\alpha(i)}}(x_iy_i)\Big)\setminus \{x_1,\ldots, x_\ell,y_1,\ldots, y_\ell,z_1,\ldots, z_\ell\}$ and $|Z_\ell|\geq |V|/4^\ell$.
\end{enumerate}
Since $|V|\geq n_1\gg \ell$, $|Z_\ell|\geq |V|/4^\ell$ implies $|Z_\ell|>3$. Consequently, by (\ref{al:irred}), there exist $w_0,w_1\in Z_\ell$ such that $w_0\nsim_H w_1$. This means there exist $x_{\ell+1}\neq y_{\ell+1}\in V\setminus \{w_1,w_0\}$ so that $x_{\ell+1}y_{\ell+1}w_1\in E$ and $x_{\ell+1}y_{\ell+1}w_0\notin E$, or vice versa. After relabeling, we may assume $x_{\ell+1}y_{\ell+1}w_1\in E$ and $x_{\ell+1}y_{\ell+1}w_0\notin E$.  This along with the inductive hypothesis (e) imply $(x_{\ell+1},y_{\ell+1})\notin \{(x_1,y_1),\ldots, (x_{\ell},y_{\ell})\}$. Combining with induction hypothesis (c), we have $|\{(x_1,y_1),\ldots, (x_{\ell+1},y_{\ell+1})\}|=\ell+1$. Let $\alpha(\ell+1)\in \{0,1\}$ be such that 
$$
|N_{H^{\alpha(\ell+1)}}(x_{\ell+1}y_{\ell+1})\cap (Z_\ell\setminus \{x_{\ell+1},y_{\ell+1},w_{0},w_1\})|\geq \frac{|Z_\ell\setminus \{x_{\ell+1},y_{\ell+1},w_{0},w_1\}|}{2}.
$$
Let $\beta(\ell+1)$ be such that $\{\alpha(\ell+1),\beta(\ell+1)\}=\{0,1\}$, set $z_{\ell+1}=w_{\beta(\ell+1)}$, and set 
$$
Z_{\ell+1}=N_{H^{\alpha(\ell+1)}}(x_{\ell+1}y_{\ell+1})\cap (Z_{\ell}\setminus \{x_{\ell+1},y_{\ell+1},z_{\ell+1}\}).
$$
Since $z_{\ell+1}\in Z_{\ell}$, induction hypotheses (c) and (e) imply $|\{z_1,\ldots, z_{\ell+1}\}|=\ell+1$. By construction, we  have $\{\alpha(\ell+1),\beta(\ell+1)\}=\{0,1\}$, $|\{x_{\ell+1},y_{\ell+1},z_{\ell+1}\}|=3$, $x_{\ell+1}y_{\ell+1}z_{\ell+1}\in E^{\beta(\ell+1)}$, and $Z_{\ell+1}\subseteq \Big(\bigcap_{i=1}^{\ell+1}N_{E^{\alpha(i)}}(x_iy_i)\Big)\setminus \{x_1,\ldots, x_{\ell+1},y_1,\ldots, y_{\ell+1},z_1,\ldots, z_{\ell+1}\}$.  Further, 
$$
|Z_{\ell+1}|\geq \frac{|Z_\ell\setminus\{x_{\ell+1},y_{\ell+1},w_0,w_1\}|}{2}\geq \frac{|Z_\ell|}{2}-2\geq \frac{|V|}{2\cdot 4^\ell} -2\geq \frac{|V|}{4^{\ell+1}},
$$
where the second inequality uses that $|Z_\ell|\geq |V|/4^\ell$ and the last inequality uses that $|V|\geq n_1\gg n_2>\ell$.  This completes step $\ell+1$ of the construction.
  
Since $n_1\gg n_2$, we can perform at least $n_2$ many such steps.  After $n_2$ steps we will have constructed
\begin{itemize}
\item sequences $(x_1,\ldots, x_{n_2})$, $(y_1,\ldots, y_{n_2})$, and $(z_1,\ldots, z_{n_2})\in V^{n_2}$, and 
 \item pairs  $(\alpha(1),\beta(1)),\ldots, (\alpha(n_2),\beta(n_2))\in \{0,1\}^2$
\end{itemize}
such that
\begin{itemize}
\item For all $1\leq i\leq n_2$, $\{\alpha(i),\beta(i)\}=\{0,1\}$,
\item For all $1\leq i<j\leq n_2$, $|\{x_i,y_i,z_i\}|=|\{x_j,y_j,z_j\}|=3$,
\item $|\{z_1,\ldots, z_{n_2}\}|=|\{(x_1,y_1),\ldots, (x_{n_2},y_{n_2})\}|=n_2$,  
\item For all $1\leq i<j\leq n_2$,  $x_iy_iz_i\in E^{\beta(i)}$, $x_jy_jz_j\in E^{\beta(j)}$, and $x_iy_iz_j\in E^{\alpha(i)}$.
\end{itemize}
By the Pigeonhole Principle, there is a choice of $\alpha\neq \beta\in \{0,1\}$ such that 
$$
|\{i\in [n_2]: (\alpha(i),\beta(i))=(\alpha,\beta)\}|\geq \frac{n_2}{4}.
$$
Since $\frac{n_2}{4}\geq n_3$, we can choose an increasing sequence $1\leq i_1<\cdots<i_{n_3}\leq n_2$  such that for each $u\in [n_3]$, we have $(\alpha(i_u),\beta(i_u))=(\alpha,\beta)$.

Now applying Lemma \ref{lem:prime3} to the tuples $(x_{i_1},y_{i_1},z_{i_1}),\ldots, (x_{i_{n_3}},y_{i_{n_3}},z_{i_{n_3}})$ yields the existence of some $\tau\in \{0,1\}$ and a sequence $ j_1<\ldots<j_{k+1}$ with $\{j_1,\ldots, j_{k+1}\}\subseteq \{i_1,\ldots, i_{n_3}\}$ such that for all $1\leq v<u\leq k+1$, $x_{j_u}y_{j_u}z_{j_v}\in E^{\tau}$ and such that the sets $\{z_{j_1},\ldots, z_{j_{k+1}}\}$ and $\{x_{j_1},\ldots, x_{j_{k+1}},y_{j_1},\ldots, y_{j_{k+1}}\}$ are disjoint.  

Suppose first $\alpha=1$ and $\beta=0$. If we also have $\tau=1$, then for each $1\leq i\leq k$, let $a_i=z_{j_i}$ and $b_i=x_{j_i}$, and $c_i=y_{j_i}$.  We then have $H[\{a_1,\ldots, a_k,b_1,\ldots, b_k,c_1,\ldots, c_k\}]$ is in $\widehat{\Irr(k)}$, since $a_ub_vc_v\in E(H)$ if and only if $u\neq v$.  On the other hand, if $\tau=0$, then for each $1\leq i\leq k$, let $a_i=z_{j_{k-i+2}}$, and let $b_i=x_{j_{k-i+1}}$ and $c_i=y_{j_{k-i+1}}$.  We then have $H[\{a_1,\ldots, a_k,b_1,\ldots, b_k,c_1,\ldots, c_k\}]\in \widehat{\Irr(k)}$, since $a_ub_vc_v\in E(H)$ if and only if $u\leq v$.

Suppose now we are in the case where $\alpha=0$ and $\beta=1$. If we also have $\tau=0$, then for each $1\leq i\leq k$, let $a_i=x_{j_i}$ and $b_i=y_{j_i}$, and $c_i=z_{j_i}$.  We then have that $H[\{a_1,\ldots, a_k,b_1,\ldots, b_k,c_1,\ldots, c_k\}]\in \widehat{\Irr(k)}$, since  $a_ub_vc_v\in E(H)$ if and only if $u=v$.  On the other hand, if $\tau=1$, then for each $1\leq i\leq k$, let $a_i=z_{j_{i}}$, $b_i=x_{j_{i}}$, and $c_i=y_{j_{i}}$. We then have $H[\{a_1,\ldots, a_k,b_1,\ldots, b_k,c_1,\ldots, c_k\}]\in \widehat{\Irr(k)}$, since $a_ub_vc_v\in E(H)$ if and only if $u\leq v$. This completes the proof.
 
\qed

\section{Lemmas related to VC-dimension and SVC-dimension}\label{ss:appclose}

In this section we prove Proposition  \ref{prop:equiv22}, Fact \ref{fact:svcbip2}, and Theorem \ref{thm:vdischom}.  We begin by covering some basic facts about VC-dimension. We will use the well-known Vapnik-Chervonenkis-Perles-Sauer-Shelah lemma for set systems of bounded VC-dimension (see e.g. Theorem 10.2.5 in \cite{Matousek}).   

\begin{theorem}\label{thm:ss}
Suppose $k\geq 0$ is an integer and $(X,\calF)$ is a set system with $\VC$-dimension at most $k$.  Then for all nonempty $Y\subseteq X$, 
$$
|\{F\cap Y: F\in \calF\}|\leq \sum_{i=0}^k{|Y|\choose i}\leq (k+1) |Y|^k.
$$
\end{theorem}  

We will also need to work with the dual of a set system, defined as follows. Given a set system $\calS=(X,\calF)$, its \emph{dual set system} is defined to be the set system $\calS^{\text{dual}}=(\calF, \calX)$, where $\calX=\{S_x: x\in X\}$ and  for each $x\in X$, $S_x=\{ S\in \calF: x\in S\}$.  It is easy to see $\VC((\calS^{\text{dual}})^{\text{dual}})=\VC(\calS)$.  We will use the well-known fact that if $\VC(\calS)=k$, then $\VC(\calS^{\text{dual}})<2^{k+1}$ (see for example Lemma 10.3.4 in \cite{Matousek}).  Our proof of Proposition \ref{prop:equiv22} will use the following lemma.

\begin{lemma}\label{lem:equivhgvcrestate}
For all integers $k\geq 1$, there exists an integer $N\geq 1$ so that if $H$ is a $3$-graph with $\VC(H)\geq N$, then $H$ contains an element of $\widehat{\PS(k)}$ as an induced sub-$3$-graph. 
\end{lemma}
\begin{proof}
Fix $k\geq 1$ and let $K$ be sufficiently large compared to $k$, and set $N=2^{K+1}$.  Assume $H$ is a $3$-graph with $\VC(H)\geq N=2^{K+1}$. By definition, this means the set system $\calS=(V(H), \{N_H(uv): uv\in {V(H)\choose 2}\})$ has VC-dimension at least $2^{K+1}$.  By the remarks preceding the statement of Lemma \ref{lem:equivhgvcrestate}, $\VC(\calS)\leq 2^{\VC(\calS^{\text{dual}})+1}$, and consequently, we must have $\VC(\calS^{\text{dual}})\geq K$. 

By definition of $\calS$ and the dual operation, this implies there exists a set of vertices $\{z_S: S\subseteq [K]\}\subseteq V(H)$ and a set of pairs $\{x_1y_1,\ldots, x_Ky_K\}\subseteq  {V(H)\choose 2}$ such that 
\begin{align}\label{al:equivhgvc}
\text{ $x_iy_iz_S\in E(H)$ when $i\in S$ and $x_iy_iz_S\notin E(H)$ when $i\notin S$.}
\end{align}
 We now perform a  cleaning procedure  because we do not know the sets $\{x_i,y_i: i\in [K]\}$ and $\{z_S: S\subseteq [K]\}$ are disjoint.  It is easy to see (\ref{al:equivhgvc}) implies that the pairs $x_1y_1,\ldots, x_Ky_K$ are pairwise distinct as elements of ${V(H)\choose 2}$. Indeed, if $1\leq i\neq j\leq K$, then $x_iy_iz_{\{i\}}\in E(H)$ and $x_jy_jz_{\{i\}}\notin E(H)$ imply $x_iy_i\neq x_jy_j$.  It is also easy to see the elements in the set $\{z_S: S\subseteq [K]\}$ must be pairwise distinct via a similar argument. 
 
 We next define a set system $(X,\calF)$. First,  the ground set $X$ is defined to be
$$
X=\{x_{1}y_{1},\ldots, x_{K}y_{K}\}\subseteq {V(H)\choose 2}.
$$
By the remarks above, $|X|=K$. Given a set $S\subseteq \{1,\ldots,K\}$, define 
$$
F_S=\{x_{i}y_{i}\in X: i\in S\}\subseteq X.
$$
Clearly $S\neq S'$ implies $F_S\neq F_{S'}$. Now define
$$
\calF=\{F_S: S\subseteq [K]\text{ and }z_S\notin  \{x_{1},\ldots, x_{K},y_{1},\ldots, y_{K}\}\}.
$$
Using that $|\{x_{1},\ldots, x_{K},y_{1},\ldots, y_{K}\}|\leq 2K$ and the fact the elements in $\{z_S:S\subseteq[K]\}$ are pairwise distinct,  we have 
$$
|\calF|\geq 2^K-| \{x_{1},\ldots, x_{K},y_{1},\ldots, y_{K}\}|\geq 2^K-2K >(k+1)K^k,
$$
 where the last inequality uses that $K$ is sufficiently large compared to $k$.   By  Theorem \ref{thm:ss}, there exists a set $J\subseteq  X$ of size $k$ which is shattered by $\calF$.

Let  $\{j_1,\ldots, j_k\}\subseteq \{1,\ldots, K\}$ be such that $  j_1<\ldots<j_k $ and $J=\{x_{j_1}y_{j_1},\ldots, x_{j_k}y_{j_k}\}$. Since $J$ is shattered by $\calF$, we know that for all $T\subseteq \{j_1,\ldots, j_k\}$, there is $F\in \calF$ such that $F\cap J=\{x_{j_u}y_{j_u}: j_u\in T\}$.  By definition of $\calF$, this means there is a vertex $d_T\in V(H)$ satisfying $d_T\notin \{x_{j_1},\ldots, x_{j_k},y_{j_1},\ldots, y_{j_k}\}$, such that for all $u\in [k]$, $x_{j_u}y_{j_u}d_T\in E(H)$ if and only if $j_u\in T$. Define 
$$
H'=H[\{x_{j_1},\ldots, x_{j_k},y_{j_1},\ldots, y_{j_k}\}\cup \{d_T: T\subseteq \{j_1,\ldots, j_k\}\}].
$$
Then  $H'\in \widehat{\PS(k)}$ by construction, so we are done. \end{proof}

We now prove Proposition \ref{prop:equiv22}, restated below for the convenience of the reader.

\begin{proposition}\label{prop:equiv22repeat}
For any hereditary $3$-graph property $\calH$, the following are equivalent.
\begin{enumerate}
\item $\calH$ has infinite VC-dimension.
\item For all  integers $k\geq 1$, $ \calH  \cap \widehat{\PS(k)}\neq \emptyset$. 
\end{enumerate}
\end{proposition}
\begin{proof}
It follows immediately from Lemma \ref{lem:equivhgvcrestate} that (1) implies (2).  

Assume now (2) holds. Fix $k\geq 1$ and set $K=2^{k+1}$. By assumption, there exists some $H\in \widehat{\PS(K)}\cap \calH$.  Let $\calS$ be the neighborhood set system of $H$, i.e. 
$$
\calS=\left(V(H), \left\{N_H(uv): uv\in {V(H)\choose 2}\right\}\right).
$$
 By definition, the VC-dimension of the $3$-graph $H$ is $\VC(\calS)$.  Since $H\in \widehat{\PS(K)}$, there exists a set of vertices $\{z_S: S\subseteq [K]\}\subseteq V(H)$ and a set of pairs $\{x_1y_1,\ldots, x_Ky_K\}\subseteq  {V(H)\choose 2}$ such that  $x_iy_iz_S\in E(H)$ when $i\in S$ and $x_iy_iz_S\notin E(H)$ when $i\notin S$.  By definition, this tells us $\VC(\calS^{\text{dual}})\geq K$.  By the remarks preceding the statement of Lemma \ref{lem:equivhgvcrestate}, this implies $\VC(\calS)=\VC((\calS^{\text{dual}})^{\text{dual}})\geq k$, and thus $\VC(H)\geq k$. We have just shown that for all $k\geq 1$, there exists $H\in \calH$ satisfying $\VC(H)\geq k$, so (1) holds by definition.   
\end{proof}

\vspace{2mm}

We next turn to proving Fact \ref{fact:svcbip2} and Theorem \ref{thm:vdischom}.  These proofs will use the following definition.

\begin{definition}
Suppose $\calH$ is a hereditary $3$-graph property.  Define the \emph{link property corresponding to $\calH$} as follows (recalling $\calG^{(2)}$ denotes the class of all finite graphs).
$$
\calL_{\calH}=\{G\in \calG^{(2)}: G=H_x\text{ for some $H\in \calH$ and $x\in V(H)$}\}.
$$
\end{definition}

The following is an exercise we leave to the reader.

\begin{fact}\label{fact:linkprop}
Suppose $\calH$ is a hereditary $3$-graph property. Then the following hold.
\begin{enumerate}
\item   $\calL_{\calH}$ is a hereditary graph property.
\item $\SVC(\calH)=\VC(\calL_{\calH})$.
\item $\calL_{\calH}=\{G\in \calG^{(2)}: (1\otimes G)\cap \calH\neq \emptyset\}$.
\end{enumerate}
\end{fact}

We now prove Fact \ref{fact:svcbip2}, repeated below for the convenience of the reader.

\begin{fact}\label{fact:svcbip2repeat}
Suppose $\calH$ is a hereditary $3$-graph property. Then the following are equivalent.
\begin{enumerate}
\item $\SVC(\calH)=\infty$.
\item For all $k\geq 1$, $\calH\cap (1\otimes \PS(k))\neq \emptyset$.
\end{enumerate}
\end{fact}
\begin{proof}
By Fact \ref{fact:linkprop}, $\SVC(\calH)=\infty$ if and only if $\VC(\calL_{\calH})=\infty$. By Fact \ref{fact:bipvc}, $\VC(\calL_{\calH})=\infty$ if and only if for all $k\geq 1$, $\calL_{\calH}\cap \PS(k)\neq \emptyset$. By Fact \ref{fact:linkprop}(3), $\calL_{\calH}\cap \PS(k)\neq \emptyset$ for all $k\geq 1$ if and only if $\calH\cap (1\otimes \PS(k))\neq \emptyset$ for all $k\geq 1$.  Thus we have shown (1) if and only if (2).
\end{proof}

We next prove Theorem \ref{thm:vdischomrepeat} below, which provides equivalent characterizations for when a hereditary $3$-graph property is far from finite SVC-dimension.    Theorem \ref{thm:vdischom} then follows immediately.

\begin{theorem}\label{thm:vdischomrepeat}
Suppose $\calH$ is a hereditary $3$-graph property.  The following are equivalent.
\begin{enumerate}
\item For some $\e\in (0,1)$, $M_{\calH}^{\hom}(\e)=\infty$.\footnote{This property is called $\vdisc_3$-homogeneity in \cite{Terry.2021b}.}
\item $\calH$ is far from finite SVC-dimension. 
\item ${\bf B}_{\calH}$ has infinite SVC-dimension.
\item For all $k\geq 1$, ${\bf B}_{\calH}\cap (1\otimes \PS(k))\neq \emptyset$.
\item One of the following holds.
\begin{enumerate}
\item For every bipartite graph $G$, $(1\otimes G)\cap {\bf B}_{\calH}\neq \emptyset$. 
\item For every co-bipartite graph $G$, $(1\otimes G)\cap {\bf B}_{\calH}\neq \emptyset$.
\item For every split graph $G$, $(1\otimes G)\cap {\bf B}_{\calH}\neq \emptyset$.
\end{enumerate}
\item One of the following holds.
\begin{enumerate}
\item For all $n\geq 1$ and every bipartite graph $G$, $(n\otimes G)\cap {\bf B}_{\calH}\neq \emptyset$. 
\item For all $n\geq 1$ and every co-bipartite graph $G$, $(n\otimes G)\cap {\bf B}_{\calH}\neq \emptyset$.
\item For all $n\geq 1$ and every split graph $G$, $(n\otimes G)\cap {\bf B}_{\calH}\neq \emptyset$.
\end{enumerate}
\end{enumerate}
\end{theorem}
\begin{proof} The equivalence of (1) and (2) follows from Theorem 2.34 in \cite{Terry.2021b}.    The equivalence of (3) and (4) follows from Fact \ref{fact:svcbip2} (recalling that Fact \ref{fact:bhhgraphs} tells us ${\bf B}_{\calH}$ is a hereditary $3$-graph property).  

We now show (4) implies (2) via the contrapositive.  Suppose (2) fails. Then $\calH$ is close to some $\calH'$ satisfying $\SVC(\calH')<\infty$.  By Fact \ref{fact:svcbip2repeat}, there is some $K\geq 1$ such that $\calH'\cap (1\otimes \PS(K))=\emptyset$.  By Theorem \ref{thm:blowupthm}, ${\bf B}_{\calH}\cap (1\otimes \PS(K))=\emptyset$, so (4) fails, as desired. 

We now show (2) implies (4) via the contrapositive. Suppose (4) fails. Then there exists $K\geq 1$ such that ${\bf B}_{\calH}\cap (1\otimes \PS(K))= \emptyset$. By Theorem \ref{thm:blowupthm}, $\calH$ is close to $\Forb(1\otimes \PS(K))$.  By Fact \ref{fact:svcbip2repeat}, $\SVC(\Forb(1\otimes \PS(K)))<\infty$, so $\calH$ is close to finite SVC-dimension by definition, so (2) fails, as desired. 

We have now shown (1)-(4) are all equivalent. We next prove (3) implies (5). Suppose ${\bf B}_{\calH}$ has infinite SVC-dimension. By Fact \ref{fact:linkprop}, $\VC(\calL_{{\bf B}_{\calH}})=\infty$.   By Fact \ref{fact:bipvc}, $\calL_{{\bf B}_{\calH}}$ contains either every bipartite graph, every co-bipartite graph, or every split graph. By Fact \ref{fact:linkprop}(3), this tells us (5) holds. 

We now prove (5) implies (4) via the contrapositive.  Assume there exists $K\geq 1$ such that ${\bf B}_{\calH}\cap (1\otimes \PS(K))=\emptyset$.  Since $\PS(K)$ contains a bipartite graph, a co-bipartite graph, and a split graph, this immediately implies (5) is false.  We now have shown (1)-(5) are equivalent.

Finally, it is immediate from Fact \ref{fact:bhhgraphs} that (6) implies (5). On the other hand, that (5) implies (6) follows from the definition of ${\bf B}_{\calH}$.
 \end{proof}

\section{Characterizations of growth rates}\label{ss:charapp}

We include here the combinatorial characterizations of the properties in each growth class.  We intend this section to serve as a reference for the reader.   Beginning with the graphs case, we state and prove our characterizations beginning with the fastest growth rates, and ending with the slowest.  

\begin{theorem}[Tower]\label{thm:graphtower}
Suppose $\calH$ is a hereditary graph property.  Then the following are equivalent (see Definitions  \ref{def:bhgraphs}, \ref{def:vchp}, and \ref{def:pset}). 
\begin{enumerate}
\item $\Tw(\Omega(\e^{-2}))\leq M_{\calH}(\e)\leq \Tw(O(\e^{-4}))$.
\item $\VC(\calH)=\infty$.
\item For all $m\geq 1$, ${\bf B}_{\calH}\cap \PS(m)\neq \emptyset$.
\end{enumerate}
\end{theorem}
\begin{proof} The equivalence of (2) and (3) follows from Fact \ref{fact:bipvc}.  That (2) implies (1) follows from Corollary \ref{cor:fastgraphs}(1). By Corollary \ref{cor:fastgraphs}(2), if $\VC(\calH)<\infty$, then $ M_{\calH}(\e)$ is bounded above by a polynomial in $\e^{-1}$.  This shows (1) implies (2) via the contrapositive, completing the equivalence.\end{proof}

\begin{theorem}[Polynomial]\label{thm:graphpoly}
Suppose $\calH$ is a hereditary graph property.  Then the following are equivalent (see Definitions \ref{def:bhgraphs}, \ref{def:vchp},  \ref{def:pset}, and \ref{def:prime}). 
\begin{enumerate}
\item $\e^{-1+o(1)}\leq M_{\calH}(\e)\leq \e^{-C}$ for some constant $C>0$.
\item $\VC(\calH)<\infty$ and ${\bf B}_{\calH}$ contains infinitely many non-isomorphic almost prime graphs.
\item There exists $k\geq 1$ such that ${\bf B}_{\calH}\cap \PS(k)=\emptyset$, but ${\bf B}_{\calH}\cap \Irr(m)\neq \emptyset$ for all $m\geq 1$. 
\end{enumerate}
\end{theorem}
\begin{proof}
We first show (1) implies (2).  Suppose (1) holds.  Since $M_{\calH}(\e)$ is not bounded below by a tower function,  Theorem \ref{thm:graphtower} implies $\VC(\calH)<\infty$. Since $M_{\calH}(\e)$ is not a constant function, Lemma \ref{lem:finitegraph} implies ${\bf B}_{\calH}$ contains infinitely many non-isomorphic almost prime graphs. Thus (2) holds.

We next show (2) implies (1). Suppose  (2) holds. Since ${\bf B}_{\calH}$ contains infinitely many non-isomorphic almost prime graphs, Corollary \ref{cor:bhinf} implies $M_{\calH}(\e)\geq \e^{-1+o(1)}$. On the other hand, since $\VC(\calH)<\infty$, Corollary \ref{cor:fastgraphs}(2) implies $M_{\calH}(\e)\leq \e^{-C}$ for some $C>0$. Combining these upper and lower bounds yields (1). 

We now show (2) implies (3).  Suppose (2) holds. Since $\calH$ has finite VC-dimension, Fact \ref{fact:bipvc} implies there exists $k\geq 1$ such that ${\bf B}_{\calH}\cap \PS(k)=\emptyset$.   Since ${\bf B}_{\calH}$ contains arbitrarily large almost prime graphs, Corollary \ref{cor:bhchar} implies ${\bf B}_{\calH}\cap \Irr(m)\neq \emptyset$ for all $m\geq 1$. Thus (3) holds.

Finally, we show (3) implies (2).  Suppose (3) holds.  Since every element in $\Irr(m)$ is almost prime and has $2m$ vertices, ${\bf B}_{\calH}$ contains infinitely many non-isomorphic almost prime graphs.  Since ${\bf B}_{\calH}\cap \PS(k)=\emptyset$ for some $k\geq 1$, $\calH$ has finite VC-dimension by Fact \ref{fact:bipvc}. Thus we have shown (2) holds.\end{proof}

\begin{theorem}[Constant]\label{thm:graphconstant}
Suppose $\calH$ is a hereditary graph property.  Then the following are equivalent (see Definitions \ref{def:bhgraphs}, \ref{def:sim}, \ref{def:irr}, and \ref{def:prime}).
\begin{enumerate}
\item $M_{\calH}(\e)=C$ for some constant $C\in \mathbb{N}^{\geq 1}$.
\item ${\bf B}_{\calH}$ contains finitely many non-isomorphic almost prime graphs.
\item $M_{\calH}(\e)=C$ where $C\in \mathbb{N}^{\geq 1}$ satisfies
$$
C=\max\{\ell \in \mathbb{N}^{\geq 1}: \text{ there is $G\in {\bf B}_{\calH}$ with $\ell$-many $\sim_G$-classes}\}. 
$$
\item There exists $k\geq 1$ such that ${\bf B}_{\calH}\cap \Irr(k)=\emptyset$. 
\end{enumerate}
\end{theorem}
\begin{proof}
 That (2) and (4) are equivalent follows from Corollary \ref{cor:bhchar}, so we can ignore (4) for the rest of the proof.

That (2) implies (3) follows from Lemma \ref{lem:finitegraph}, and it is obvious (3) implies (1).  We have left to show (1) implies (2).  Assume (1).  Since $M_{\calH}$ is not bounded below by a polynomial, Theorem \ref{thm:graphpoly} implies ${\bf B}_{\calH}$ contains finitely many non-isomorphic almost prime graphs, so (2) holds.\end{proof}

We now prove characterizations for $3$-graphs from fastest to slowest.

\begin{theorem}[Tower]\label{thm:introtower}
Suppose $\calH$ is a hereditary $3$-graph property.  Then the following are equivalent (see Definitions  \ref{def:bhgraphs},  \ref{def:clsvc}, and  \ref{def:psotimes}).
\begin{enumerate}
\item $\Tw(\Omega(\e^{-1}))\leq M_{\calH}(\e)\leq \Tw(O(\e^{-4}))$.
\item $\calH$ is far from finite SVC-dimension.
\item ${\bf B}_{\calH}$ has infinite SVC-dimension.
\item  For all $k\geq 1$, ${\bf B}_{\calH}\cap (1\otimes \PS(k))\neq\emptyset$.
\end{enumerate}
\end{theorem}
\begin{proof}
 Theorem \ref{thm:sl1} yields immediately that (1) and (2) are equivalent.  The equivalence of (2)-(4) follows from Theorem \ref{thm:vdischom}.\end{proof}

\begin{theorem}[Almost Exponential]\label{thm:expintro}
Suppose $\calH$ is a hereditary $3$-graph property.  Then the following are equivalent (see Definitions  \ref{def:bhgraphs}, \ref{def:ukhat}, \ref{def:clvc}, \ref{def:clsvc}, and \ref{def:psotimes}).
\begin{enumerate}
\item $2^{\Omega(\e^{-1/8})}\leq M_{\calH}(\e)\leq 2^{2^{\e^{-C}}}$ for some constant $C>0$.
\item $\calH$ is far from finite VC-dimension and close to finite SVC-dimension.
\item ${\bf B}_{\calH}$ has infinite VC-dimension and finite SVC-dimension. 
\item For all $k\geq 1$, ${\bf B}_{\calH}\cap \widehat{\PS(k)}\neq \emptyset$, but for some $m\geq 1$, ${\bf B}_{\calH}\cap (1\otimes \PS(m))=\emptyset$. 
\end{enumerate}
\end{theorem}
\begin{proof}
 The equivalence of (2) and (3) is from Theorem \ref{thm:vdischom} and Proposition \ref{prop:equiv2}.  That (2) implies (1) is immediate from Theorem  \ref{thm:exp}(2). The equivalence of (3) and (4) are from Proposition \ref{prop:equiv22} and Fact \ref{fact:svcbip2}.
 
 Assume now (1) holds. Since $M_{\calH}$ is not bounded below by a tower function,  Theorem \ref{thm:introtower} implies $\calH$ is close to finite SVC-dimension.   Since $M_{\calH}$ is not bounded above by a polynomial, Theorem \ref{thm:exp}(1) implies $\calH$ cannot be close to finite VC-dimension, i.e. it must be far from finite VC-dimension. Thus (2) holds.\end{proof}

\begin{theorem}[Polynomial]\label{thm:3graphpoly}
Suppose $\calH$ is a hereditary $3$-graph property.  Then the following are equivalent (see Definitions \ref{def:bhgraphs}, \ref{def:ukhat}, \ref{def:clvc}, \ref{def:3graphirred}, and \ref{def:prime3}).
\begin{enumerate}
\item $\Omega(\e^{-1/8})\leq M_{\calH}(\e)\leq \e^{-C}$ for some constant $C>0$.
\item $\calH$ is close to finite VC-dimension and ${\bf B}_{\calH}$ contains infinitely many non-isomorphic almost prime $3$-uniform hypergraphs.
\item ${\bf B}_{\calH}$ has finite VC-dimension and contains infinitely many non-isomorphic almost prime $3$-uniform hypergraphs.
\item There exists $k\geq 1$ such that ${\bf B}_{\calH}\cap \widehat{\PS(k)}=\emptyset$, but for all $m\geq 1$,  ${\bf B}_{\calH}\cap \widehat{\Irr(m)}\neq \emptyset$.
\end{enumerate}
\end{theorem}
\begin{proof}
The equivalence of (2) and (3) follows from Proposition  \ref{prop:equiv2}.  The equivalence of (3) and (4) follows from Propositions \ref{prop:equiv2} and \ref{prop:equiv4}.  That (3) implies (1) follows immediately from  Proposition  \ref{prop:equiv2}  and Theorem \ref{thm:exp}(1) (for the upper bound), and Corollary \ref{cor:vcfar3} (for the lower bound). 

We now show (1) implies (2).  Assume (1) holds. Since $M_{\calH}$ is not bounded below by a tower function, Theorem \ref{thm:sl1} implies $\calH$ is close to finite SVC-dimension. Since $M_{\calH}$ is not bounded below by an exponential, Theorem \ref{thm:exp}(2) then implies $\calH$ must be close to finite VC-dimension.  Since $M_{\calH}$ is not a constant function, Theorem \ref{thm:bhfinite3} implies ${\bf B}_{\calH}$ contains infinitely many non-isomorphic almost prime $3$-graphs. Thus (2) holds.  This completes the verification of the equivalences (1)-(4).\end{proof}

\begin{theorem}[Constant]\label{thm:3graphconstant}
Suppose $\calH$ is a hereditary $3$-graph property.  Then the following are equivalent (see Definitions \ref{def:bhgraphs}, \ref{def:3graphirred}, and \ref{def:prime3}). 
\begin{enumerate}
\item $M_{\calH}(\e)=C$ for some constant $C\in \mathbb{N}^{\geq 1}$.
\item ${\bf B}_{\calH}$ contains finitely many non-isomorphic almost prime $3$-graphs. 
\item $M_{\calH}(\e)=C$ where $C\in \mathbb{N}^{\geq 1}$ satisfies 
$$
C=\max\{\ell \in \mathbb{N}^{\geq 1}: \text{ there is $H\in {\bf B}_{\calH}$ with $\ell$-many $\sim_H$-classes}\}. 
$$
 \item There exists $k\geq 1$ such that ${\bf B}_{\calH}\cap \widehat{\Irr(k)}=\emptyset$.
\end{enumerate}
\end{theorem}
\begin{proof}
That (2) and (4) are equivalent follows from Proposition \ref{prop:equiv4}, so we will ignore (4) for the rest of this proof.  That (2) implies (3) follows from Theorem \ref{thm:bhfinite3}.  That (3) implies (1) is obvious.  We have left to show (1) implies (2).  

Assume (1) holds. Then $M_{\calH}(\e)$ is not bounded below by a polynomial, so by Theorem \ref{thm:3graphpoly}, ${\bf B}_{\calH}$ contains only finitely many non-isomorphic almost prime $3$-graphs, i.e. (2) holds. This completes the proof. \end{proof}

\noindent{\bf Declaration of generative AI and AI-assisted technologies in the manuscript preparation process.}
During the preparation of this work, the author used ChatGPT Pro for proofreading assistance, including identifying typographical and spelling errors, grammatical issues, and mathematical inconsistencies. The author reviewed and edited the output as needed and takes full responsibility for the content of the published article.

\bibliographystyle{amsplain}

\end{document}